\newlength\figureheight
\newlength\figurewidth
\def\1{1\!{\rm l}}
\theoremstyle{definition}
\newtheorem{assumption}{Assumption}
\newtheorem{remark}{Remark}
\newtheorem{theorem}{Theorem}
\newtheorem{corollary}{Corollary}
\newtheorem{lemma}{Lemma}
\newcommand{\dt}{\text{d}}
\newcommand{\argmax}{\operatornamewithlimits{argmax\,}}
\newcommand{\bz}{\boldsymbol{z}}
\newcommand{\by}{\boldsymbol{y}}
\newcommand{\bw}{\boldsymbol{w}}
\newcommand{\bx}{\boldsymbol{x}}
\def \cut{\text{cut}}
\def \dt {\mathrm{d}}
\numberwithin{equation}{section}
\theoremstyle{plain}
\begin{document}

\begin{frontmatter}
\title{Cutting feedback and modularized analyses in generalized Bayesian  inference\support{This work was supported by the Australian Research Council and a Singapore	Ministry of Education Academic Research Fund Tier 1 grant.}}
\runtitle{Generalized Cut Posteriors}

\begin{aug}
\author{\fnms{David T.} \snm{Frazier}\thanksref{addr1}\ead[label=e1]{david.frazier@monash.edu}},
\and
\author{\fnms{David J} \snm{Nott}\thanksref{addr2}\ead[label=e2]{standj@nus.edu.sg}}

\runauthor{D.T. Frazier, and D.J. Nott}

\address[addr1]{Department of Econometrics and Business Statistics, Monash University, Australia.
    \printead{e1} 
}

\address[addr2]{Department of Statistics and Applied Probability, National University of Singapore, Singapore. 
	\printead{e2}
}

\end{aug}
\begin{abstract}
This work considers Bayesian inference under misspecification for complex  statistical models comprised of simpler submodels, referred to as modules, that are coupled together.  Such ``multi-modular" models often arise when combining information from different data sources, where there is a module for each data source.  When some of the modules are misspecified, the challenges of Bayesian inference under misspecification can sometimes be addressed by using ``cutting feedback" methods, which modify conventional Bayesian inference by limiting the influence of unreliable modules. Here we investigate cutting feedback methods in the context of generalized posterior distributions, which are built from arbitrary loss functions, and present novel findings on their behaviour.  We make three main contributions. First, we 
describe how cutting feedback methods can be defined in the generalized
Bayes setting, and discuss the appropriate scaling of the loss functions for different modules to each other and the prior.  Second, we 
derive a novel result about the large sample behaviour of the posterior for a given module's parameters conditional on the parameters of other modules.  
This formally justifies the use of conditional Laplace approximations, which provide better approximations of conditional posterior distributions compared to conditional distributions from a Laplace approximation of the joint posterior.  
Our final contribution leverages the large sample approximations 
of our second contribution to provide convenient diagnostics for
understanding the sensitivity of inference to the coupling of the modules, 
and to implement a new semi-modular posterior approach for conducting
robust Bayesian modular inference.  
The usefulness
of the methodology is illustrated in several benchmark examples from
the literature on cut model inference.
		
\end{abstract}
\begin{keyword}
	\kwd{Cutting feedback; Model misspecification; Modularization; Semi-modular inference; Generalized Bayesian inference}
\end{keyword}

\end{frontmatter}

\section{Introduction}

Complex statistical models are sometimes composed of smaller sub-models, which we call modules, that are interconnected. This modular structure is common when integrating information from multiple data sources, where each data source is associated with a separate sub-model. When a model with a modular structure is correctly specified, Bayesian inference has some desirable
properties, regardless of the number or complexity of the modules. However, when there is misspecification, conventional Bayesian inference may need to be adapted to account for it.
This paper explores some new forms of a 
method called ``cutting feedback" for modified Bayesian inference
under misspecification.  

It is well-known that misspecification of an assumed model
compromises the use and interpretation of Bayesian inference; see, e.g., \cite{grunwald2012safe} for examples. 
Nonetheless, when dealing with a multi-modular model, 
a researcher may suspect that only some modules are grossly misspecified. In such cases, modified Bayesian analyses can be used to preserve valid inference for parameters in the correctly specified modules. This can make model criticism easier and ensure that estimates of parameters in the misspecified modules retain a useful interpretation \citep{liu+bb09}.  
These are some of the goals of the cutting feedback methods 
which are the focus of this paper, which attempt to limit the
influence of unreliable modules.  
To understand better the wide-ranging applications of cutting feedback 
and modularized Bayesian
inference, we recommend the papers by \cite{jacob2017better} and 
\cite{liu+bb09}, with the latter paper focusing on applications in the analysis of computer models.

The current literature on cutting feedback mainly focuses on fully specified parametric models. 
However, if a parametric model is misspecified, researchers can still produce useful Bayesian inferences by using a posterior based 
on a loss function that captures the features of the data that are most important. 
Such generalized Bayesian inference methods (see, for example, \cite{bissiri2016general}), have become increasingly popular in statistical inference.  They recover conventional Bayesian inference as a special case 
when the
loss function used in their construction is the negative log likelihood.  
This paper combines the use of cutting feedback methods with generalized Bayesian inference, resulting in  
an  attractive approach to Bayesian modular inference.  Our framework
allows a  
targeted loss function to be used for modules which are misspecified,
instead of relying on the negative log likelihood function. Meanwhile, we can continue to use the negative log likelihood function as the loss for modules that are well specified.  The generalized Bayes perspective on modular
inference is useful in model improvement.  Starting
with a flawed parametric model specification, 
we can replace 
the negative log likelihood for suspect modules with other loss functions
to see whether this 
resolves any incompatibility between the ``cut posterior" produced
by cutting feedback methods and full posterior inferences.

Our work makes three main contributions to the literature on generalized Bayesian inference and cutting feedback. 
Firstly, we describe how to define cutting feedback in the generalized
Bayesian setting, and discuss how to appropriately scale loss functions
for different modules to each other and the prior.  
Secondly, we derive a novel large sample result that allows us to express the posterior for the parameters of a 
given module conditional on the parameters of the remaining modules.  In contrast, the only existing result on 
the large sample behaviour of cut posteriors of which we are aware \citep{pompe2021asymptotics} presents 
a joint analysis of the cut posterior.  \cite{pompe2021asymptotics} also discuss a novel posterior bootstrap approach to
cut posterior computation.  
As we argue in Section \ref{sec:cutting}, 
a normal approximation to the joint cut posterior provides only limited insight into propagation of uncertainty
in cutting feedback, because conditioning on a subset of variables in a multivariate normal distribution
results in a conditional covariance matrix that doesn't depend on the values of the conditioning variables.
In contrast, our results justify normal approximations
for conditional posterior distributions where covariance matrices change with the values 
for the conditioning variables, giving useful insights into uncertainty propagation in cut posteriors.
Our new result is also applicable to general loss functions, and only requires weak smoothness conditions.

Finally, we use the large sample approximations provided 
in our second contribution to develop easily computable diagnostics 
for understanding the coupling of the modules, and to 
implement a new ``semi-modular" posterior for conducting 
robust modular inference.   
Semi-modular inference
\citep{carmona2020semi} partially cuts feedback, interpolating between
inferences based on the cut and full posterior according to a tuning parameter.
The challenges of cut posterior computation
also apply to semi-modular inference, with the key difficulty being the 
evaluation of 
an intractable marginal likelihood term.  Estimation of the semi-modular (and cut) posterior is 
often done using a computationally burdensome nested Markov chain Monte Carlo (MCMC) method,
and our novel semi-modular posterior can be computed efficiently using
the large sample approximations we develop, delivering similar results to the semi-modular posterior of \cite{carmona2020semi}. See Section \ref{sec:semimod} for further details. 
We illustrate the above diagnostics and semi-modular posterior in two benchmark examples found in the literature on cutting feedback. 

\noindent\textbf{Notation.} Here we define notation used in the remainder of the paper. The term $\|\cdot\|$ denotes the Euclidean norm, while $|\cdot|$ denotes the absolute value function. $C$ denotes an arbitrary positive constant that can change from line-to-line. 
For $x=(x_1^\top,x_2^\top)^\top\in\mathbb{R}^d$ and a function $f:\mathbb{R}^d\mapsto \mathbb{R}$, we let $\nabla_x f(x)$ denote the gradient of $f(x)$ wrt $x$, and $\nabla_{xx}^2f(x)$ the Hessian. Let $N\{\mu,\Sigma\}$ denote the normal distribution with mean $\mu$ and covariance matrix $\Sigma$, with $N\{x;\mu,\Sigma\}$ the corresponding 
normal density at the point $x$. For $\mathcal{D}$ some known distribution, and $x=(x_1^\top,x_2^\top)^\top\in\mathbb{R}^d$ a $d$-dimensional random variable, the notation $x\sim \mathcal{D}$ signifies that the law of $x$ is $\mathcal{D}$, while $x_1|x_2\sim \mathcal{D}$ signifies that the conditional law of $x_1$ given $x_2$ is $\mathcal{D}$. The measure $P^{(n)}_{0}$ denotes the true unknown probability measure generating the data, and $\Rightarrow$ denotes weak convergence (under $P^{(n)}_0$). { } 

\section{Motivation and Framework}

Modifying Bayesian inference to limit the influence of a suspect module is 
the main idea of cutting feedback methods.   
But what is a module exactly, and how is cutting feedback defined for multi-modular models of arbitrary complexity?  This is not a settled question in the current literature.  
Recent work by \cite{liu+g22} has provided a 
first step towards clarity, where the authors define modules 
based on the representation of a Bayesian model in terms of a 
directed acyclic graph (DAG) and a partitioning of the observable 
quantities.  It is fair to say, however, that different general formulations
of modular inference are still being explored.    

In previous work, the most general approach to
cutting feedback methods has involved an
``implicit" definition through modification of an MCMC
algorithm designed to sample the conventional posterior distribution.  
One implementation of this approach is through the \texttt{cut} function
of the WinBUGS and OpenBUGS software packages 
\citep{lunn+bsgn09}.  If a Bayesian model is defined through
a DAG, and a Gibbs sampler is considered for sampling the 
posterior distribution using the DAG parameter nodes as blocks, 
then ``cuts" can be defined for some links of the graph.  Each cut
corresponds to leaving out a certain term in the joint model when 
forming the full conditional posterior density for one of the parameter nodes.   
Once modified full conditional distributions
have been constructed, a modified Gibbs sampler iteratively samples
from these, and the cut posterior distribution is defined as the stationary
distribution of the resulting Markov chain.  See \citet{lunn+bsgn09} 
or \cite{plummer2015cuts} for a more detailed description.  

\citet{lunn+bsgn09} note that the modified full conditional distributions
are not the full conditional distributions of any well-defined joint
distribution but argue that the use of such inconsistent conditional
distributions can be sensible.  If modified Gibbs steps are replaced
by Metropolis-within-Gibbs updates in the sampling process, 
\cite{plummer2015cuts} observed that the stationary distribution
of the Markov chain can depend on the proposal used, and went on to 
define a ``two-module" system where an explicit definition of the
cut posterior distribution can be given, clarifying some aspects of the
implicit cut approach.   This two module system is general enough for
many applications of Bayesian modular inference in which there might be one suspect model component of particular concern.  This two module system is also fundamental to
the recent work of \cite{liu+g22} where multi-modular systems
and cut posteriors are defined generally.  \cite{liu+g22} define two module systems first,
based on a partitioning of the observables into two parts, and then 
consider recursively splitting existing modules into two in order to define 
more complex multi-modular representations.  
In what follows, we will focus our discussion on cutting feedback in two-module systems, given their usefulness in applications and their role in defining multi-modular models with more than two modules. We define modules and
cutting feedback precisely in the context of this two module system, and
refer the interested reader to 	\cite{liu+g22}
for a more general discussion.

\subsection{Two module system} \label{sec:twomod}

The ``two module" system of \cite{plummer2015cuts} considers two
data sources, denoted here as $\bz$ and $\bw$.  
The data $\bz$ consists of $n_1$ observations $\bz=(z_{1},\dots,z_{n_1})^\top$, $z_i\in\mathcal{Z}$, and $\bw$ consists of 
$n_2$ observations $\bw=(w_1,\dots,w_{n_2})$, $w_i\in\mathcal{W}$, 
and we write $n=n_1+n_2$. Let $\by=(\bz^\top,\bw^\top)^\top$ denote the entire set of observed data. 
A potentially misspecified statistical model for $\by$ is postulated that depends on parameters $\theta\in\Theta\subseteq\mathbb{R}^{d_\theta}$, where $\theta=(\varphi^\top,\eta^\top)^\top$, with $\varphi\in\Phi\subseteq\mathbb{R}^{d_\varphi}$, $\eta\in\mathcal{E}\subseteq\mathbb{R}^{d_\eta}$, and $d_\theta=d_\varphi+d_\eta$. 
Prior beliefs over $\Theta$ are represented by the prior density $\pi(\theta)=\pi(\varphi)\pi(\eta|\varphi)$. 

\cite{plummer2015cuts} considers Bayesian inference on $\theta$ in cases where the distribution of $\bz$ depends on $\varphi$, with density $p(\bz|\varphi)$, while the distribution of $\bw$ depends on $\eta$ and $\varphi$, with density
$p(\bw|\eta,\varphi)$.  In other words, the models for the two data sources have a shared dependence on $\varphi$ (a global parameter) whereas $\eta$ appears only in the model for $\bw$.  We define ``modules" here as subsets
of terms in the joint Bayesian model.   Here there are two modules, 
with the first consisting of the likelihood term $p(\bz|\varphi)$ 
and prior $\pi(\varphi)$, and the second consisting of 
the likelihood $p(\bw|\eta,\varphi)$ and conditional prior 
$\pi(\eta|\varphi)$.  The structure of the model is modular in the sense
that valid Bayesian inference about $\varphi$ can be obtained based on 
module one only (i.e. we can use $\pi(\varphi|\bz)\propto \pi(\varphi)p(\bz|\varphi)$), while given a value of $\varphi$, valid conditional Bayesian
inference for $\eta|\varphi$ can be obtained based on module two only
(i.e. we an use $\pi(\eta|\varphi,\bw)\propto \pi(\eta|\varphi)p(\bw|\eta,\varphi)$).  
The graphical structure of the model is given in Figure \ref{two-module}, with the nodes to the left of the red dashed line comprising module one, and the nodes to the right comprising module two, where it is assumed in the figure that
$\pi(\eta|\varphi)$ does not depend on $\varphi$ for simplicity.  \vspace{-.3cm}

\begin{figure}[h]
	\centerline{\includegraphics[width=40mm]{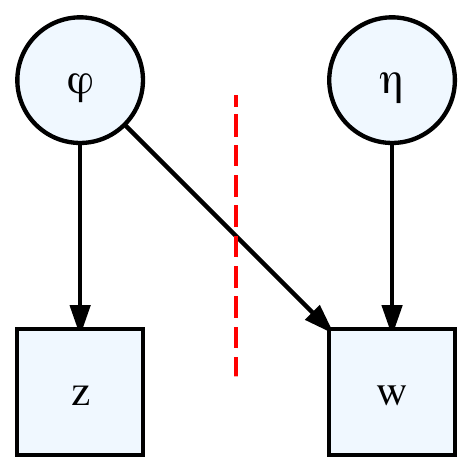}}
	
	\caption{\label{two-module}Graphical structure of the two-module system.  The red dashed line indicates the cut.}
\end{figure}
\vspace{-.5cm}

We assume that there is high confidence in the accuracy of the model for $\bz|\varphi$ in the first
module, but it is uncertain that the model for $\bw|\eta,\varphi$ in the second model is adequate.
Consequently, if we were to conduct standard Bayesian inference on $\theta$ using both modules, our inferences on the shared parameter $\varphi$ could be contaminated by misspecification of the second module, and any useful interpretation
for our inferences about $\eta$ may also be compromised if the parameters $\varphi$ do not have their
intended meaning. See Section \ref{sec:examples} for examples.

We will discuss two methods that can guard against compromised inferences on $\varphi$ due to potential misspecification of the second module. The first method involves using a loss function rather than a parametric model 
to capture the important features of the data for the second module;  a generalized posterior is constructed based on the loss function for the parameters of interest. The second approach is to employ cutting feedback methods. Generalized Bayesian methods and cutting feedback are not used here as approximations to conventional Bayesian inference;  they are 
alternative inferential approaches 
intended to address the issue of misspecification and
having a sound statistical justification in their own right.  
Approximate methods for computation may be of interest, but this is 
discussed later in Section 3, based on the asymptotic results we develop there.
In this article, we aim to combine cutting feedback and generalized Bayesian updating to produce robust Bayesian inferences on $\theta$, and we
explain these concepts next. 

\subsection{Generalized Posteriors}
When the model is misspecified, standard Bayesian approaches can deliver  inferences that are poor or unreliable (see, e.g., \cite{grunwald2017inconsistency} for specific examples, as well as \cite{kleijn2012bernstein} for general results in parametric models). 
Specifying full probabilistic models for complex data can be difficult,
and it would be attractive if Bayesian inference could be done only
for parameters of interest appearing in a loss function.
Under some mild conditions on the loss, 
\cite{bissiri2016general} justify a Bayesian analysis 
in this setting 
in which the likelihood in the usual Bayesian update is replaced
with a ``loss likelihood" with a highly constrained form.  
The target parameter of interest is the population minimizer of the
loss.  

In a standard generalized posterior analysis without modular structure, 
there is a parameter $\theta$ and data $\bx=(x_1,\dots, x_n)^\top$ say.  
The prior $\pi(\theta)$ is to be updated into a generalized posterior
$\pi(\theta|\bx)$, where the belief update depends on $\bx$ only
through a loss function $q_n(\theta)=\sum_{i=1}^n q(x_i;\theta)$, where
$q(x_i;\theta)$ is the loss for the $i$th observation.  
A remarkable argument in \cite{bissiri2016general} specifies
the form that the belief update must take, under some mild conditions.  
They consider the requirement of order coherence, where
if the data $\bx$ are split into two parts and an update is done
sequentially, then the result should be the same as if a single update
were done using all the data.  Order coherence is enough to determine
the form of the generalized posterior density, which is 
$$\pi(\theta|\bx)\propto \pi(\theta)\exp\left\{-\nu q_n(\theta) \right\},$$
where $\nu\ge0$ is called the learning rate, and scales the information
in the loss function appropriately relative to the information in the prior.  While the generalized Bayesian update of \cite{bissiri2016general}
is motivated by Bayesian 
notions of coherence, the choice of learning rate gives
the opportunity to bring in other considerations such as information
matching in the update \citep{holmes+w17,lyddon+hw19} or achieving good frequentist
performance for estimating functionals of interest \citep{syring+m18}.   
A generalization of the arguments in \cite{bissiri2016general} relevant
to the justification of parametric cutting feedback methods is discussed
in \cite{nicholls2022valid}.  Generalized Bayesian updating is also related to PAC-Bayes methods;  
see \cite{alquier21} for an introduction.

Consider now the case of modular Bayesian inference in the two
module system.  The decomposition of the statistical model into two distinct modules, containing data $\bz$ and $\bw$ respectively, implies that we are free to choose separate loss functions for each module. Let $\ell:\mathcal{Z}\times\Phi\rightarrow\mathbb{R}$ denote the loss function for module one, 
involving a parameter $\varphi$, and $m:\mathcal{W}\times\mathcal{E}\times\Phi\rightarrow\mathbb{R}$ denote the loss function for module two, involving parameters $\eta$ and $\varphi$. 
In the following we write
\begin{equation}
	Q_n(\theta)=L_{n_1}(\varphi)+M_{n_2}(\eta,\varphi),\; L_{n_1}(\varphi)=-\sum_{i=1}^{n_1}\ell(z_i,\varphi),\; M_{n_2}(\eta,\varphi)=-\sum_{i=1}^{n_2}m(w_i,\eta,\varphi), \label{eq:loss}
\end{equation} 
so that $-L_{n_1}(\varphi)$ and $-M_{n_2}(\eta,\varphi)$ are the empirical loss
	functions for the first and second modules respectively. When the two sample sizes are equal, i.e., $n_1=n_2$, we abuse notation and simply denote the criteria as $L_n(\varphi)$ and $M_n(\eta,\varphi)$. 

Consider first a belief update of the prior density $\pi(\varphi)$ using 
$\bz$ and the first module loss function $\ell(\cdot)$.  
The order coherence argument of \cite{bissiri2016general} implies
that the generalized posterior density $\pi(\varphi|\bz)$ must take
the form
$ \pi(\varphi|\bz)\propto \pi(\varphi)\exp\left\{\nu L_{n_1}(\varphi)\right\},$ 
where $\nu\ge0$ is a learning rate for the first module that needs to be chosen.
If the loss function is the negative log-likelihood, and we 
take $\nu=1$, this is the conventional Bayesian update.  

Once $\pi(\varphi|\bz)$ is obtained, suppose we now take $\pi(\theta|\bz)=\pi(\varphi|\bz)\pi(\eta|\varphi)$ as the ``prior" for a Bayesian update 
using the information in the second module.  
Again following the 
order coherence argument of \cite{bissiri2016general}, and its extensions in \cite{nicholls2022valid}, the
generalized posterior density $\pi(\theta|\bz,\bw)$ given $\bz$ and $\bw$
must take the form
\begin{align} 
	\pi(\theta|\bz,\bw) & \propto \pi(\theta|\bz) \exp\{\nu' M_{n_2}(\eta,\varphi)\} \nonumber \\
	& \propto \pi(\varphi)\pi(\eta|\varphi)\exp\{\nu L_{n_1}(\varphi)+\nu' M_{n_2}(\eta,\varphi)\}, \label{fullposterior}
\end{align}
where $\nu'\ge0$ is an additional learning rate.  As before, 
if $\nu'=1$ and the loss function $m(\cdot)$ is the log-likelihood, 
this is a conventional Bayesian update using the data for the
second module.  
For the full belief update \eqref{fullposterior}, it takes the form of
Bayesian updating where the likelihood has been replaced by the 
loss likelihood $\exp\{\nu L_{n_1}(\varphi)+\nu' M_{n_2}(\eta,\varphi)\}$.  
If the two loss terms $L_{n_1}(\varphi)$ and $M_{n_2}(\varphi,\eta)$
are of the same type and hence on the same scale, 
then it could make sense to choose $\nu=\nu'$, and
we would obtain a loss likelihood with a single learning rate $\nu$ and 
of the customary form in a generalized
Bayesian analysis, 
$\exp\left\{\nu Q_n(\theta)\right\}$, 
where $-Q_n(\theta)$ is the overall empirical loss.  

In generalized Bayesian inference the choice of the learning rate is
very important, and this is true in the case of modular inference 
considered here also.
See \cite{wu2020comparison} for a review and comparison
of different methods.    
Generalizing similar ideas to \cite{holmes+w17} and \cite{lyddon+hw19}, later we suggest choosing $\nu$ and $\nu'$ based on an information matching
argument.  It is often 
not necessary in the applications we consider to estimate the first module learning rate $\nu$:  
if the first module is specified
through a probabilistic model and we are confident in this specification, 
$\nu=1$ is the natural choice.  
However, we discuss the choice of learning rates
in Section 3.2 in a general way, addressing the situation where it may be desired
to choose both $\nu$ and $\nu'$.      

Our later theoretical results will be written using a loss likelihood with a single learning rate where $\nu=\nu'$.  There is no loss of generality
in this, or even in omitting learning rates altogether in the theoretical discussion, 
since any learning rates can be absorbed into the definition of the
loss function.


\subsection{Cutting Feedback with Generalized Posteriors}\label{sec:cutting}

Our confidence in the accuracy of the first module means that the criterion $L_{n_1}(\varphi)$ can be chosen as the log-likelihood.  However, since we are working with generalized posteriors, we only maintain that $L_{n_1}(\varphi)$   produces ``reliable inferences'' for $\varphi$. Our lack of confidence in the specification of the second module means we are concerned that incorporating this module may contaminate our inferences for $\varphi$. In such situations, cutting feedback methods (see, e.g., \citealp{plummer2015cuts}) can be used to mitigate the impact of misspecification. 

In the two module system discussed in Section \ref{sec:twomod} for a probabilistic
model, the first module consists of the terms $\pi(\varphi)$ and $p(\bz|\varphi)$ in the joint Bayesian model, and the second module consists of 
$p(\eta|\varphi)$ and $p(\bw|\eta,\varphi)$.  In a generalized Bayesian
analysis, module one consists of $p(\varphi)$ and the loss likelihood term
$\exp\{\nu L_{n_1}(\varphi)\}$, and module two consists of 
$\pi(\eta|\varphi)$ and the loss likelihood $\exp\{\nu M_{n_2}(\eta,\varphi)\}$, 
if a single learning rate is assumed for both modules.

Generalized Bayesian analyses have been used in the context
of two module system previously, but only as a justification for parametric
cutting feedback methods when a probabilistic model is specified.
\cite{carmona2020semi} considered order coherence
for cut and semi-modular inference methods, and    
\cite{nicholls2022valid} observed
that the implicit loss function used in these approaches 
is not additive as required in the theory of \cite{bissiri2016general}.  \cite{nicholls2022valid} 
generalize the existing theory to ``prequentially
additive" loss functions, which is enough to justify standard parametric
cut inference as valid and order coherent generalized Bayesian updating.  
In contrast to this work, our aim is not to justify cutting feedback methods for
probabilistic multi-modular models as coherent in some sense, but to consider
situations where there may be no probabilistic model for the data, but only
loss functions to connect module data to parameters. 

To present cutting feedback for generalized posteriors, decompose $\pi(\theta|\by)$ in \eqref{fullposterior} as the product of a marginal posterior for $\varphi|\bz$, a conditional posterior for $\eta|\bw,\varphi$, and a ``feedback term'': 
\begin{equation}\label{eq:decomp}
	\pi(\theta|\by)=\pi_{\cut}(\varphi|\bz)\pi(\eta|\bw,\varphi)\tilde{p}(\bw|\varphi),
\end{equation}
where $\pi_{\cut}(\varphi|\bz)\propto \pi(\varphi)\exp\{\nu L_{n_1}(\varphi)\}$, $\pi(\eta|\bw,\varphi):= {\pi(\eta|\varphi)\exp\{\nu M_{n_2}(\eta,\varphi)\}}/{m_\eta(\bw|\varphi)},$ and
\begin{equation}\label{eq:post}
	\begin{aligned}
		\tilde{p}(\bw|\varphi)\propto m_\eta(\bw|\varphi),\;\;
		m_\eta(\bw|\varphi)=\int_{\mathcal{E}}\pi(\eta|\varphi)\exp\{\nu M_{n_2}(\eta,\varphi)\}\dt\eta
	\end{aligned}.
\end{equation}
The feedback term $\tilde{p}(\bw|\varphi)$ derives its name 
from representing the influence of module two on the marginal posterior for
$\varphi$.  To understand this better, consider integrating 
out $\eta$ in \eqref{eq:decomp}, to obtain
$\pi(\varphi|\by)=\pi_{\text{cut}}(\varphi|\bz)\widetilde{p}(\bw|\varphi)$.  
Since $\pi_{\text{cut}}(\varphi|\bz)$ represents the posterior density
for $\varphi$ based only on the first module data $\bz$, we see that
$\widetilde{p}(\bw|\varphi)$ modifies this posterior based on the second 
module data to give the $\varphi$ marginal of $\pi(\theta|\by)$.  
Dropping the feedback term $\tilde{p}(\bw|\varphi)$ in 
\eqref{eq:decomp} produces a ``generalized cut posterior'':
$$\pi_{\text{cut}}(\theta|\bz,\bw):= {\pi}_{\cut}(\varphi|\bz)\pi_{}(\eta|\bw,\varphi). $$
In this joint cut posterior, marginal posterior inferences for $\varphi$ are obtained
based on module one only, and the conditional posterior density of $\eta$
given $\varphi$ is the same as for $\pi(\theta|\by)$ and based on module two
only.  Our discussion of
cut inference is in the generalized Bayesian framework, but if we
use negative log likelihood as the loss for an assumed probabilistic model, 
our definition of the cut posterior reduces to the conventional one in the
literature.

Obtaining samples from the cut posterior $\pi_{\text{cut}}(\theta|\bz,\bw)$ can be challenging. Since $$
\pi_{\text{cut}}(\theta|\bz,\bw)\propto \pi(\varphi)\exp\{\nu L_{n_1}(\varphi)\}\frac{\pi(\eta|\varphi)\exp\{\nu M_{n_2}(\eta,\varphi)\}}{m_\eta(\bw|\varphi)},
$$ if MCMC is used to sample from $\pi_{\text{cut}}(\theta|\bz,\bw)$, we must evaluate the term ${m_\eta(\bw|\varphi)}$.  This term is similar to a``marginal likelihood'' for $\eta$ conditional on a fixed $\varphi$, and is generally not available in closed form outside of toy examples. In principle, even though we are in the case of generalized posteriors, the  computationally intensive methods proposed by \cite{plummer2015cuts}, and \cite{jacob2017better} to deal with the intractable term ${m_\eta(\bw|\varphi)}$ could be used to sample from the cut posterior. 


While sampling from $\pi_{\text{cut}}(\theta|\bz,\bw)$ is difficult, draws from ${\pi}_{}(\eta|\bw,\varphi)$ for any $\varphi$ can be made without the need 
to compute $m_\eta(\bw|\varphi)$. This suggests the following 
sequential algorithm to obtain draws from $\pi_{\text{cut}}(\theta|\bw,\bz)$: first, sample $\varphi'\sim \pi_{\cut}(\varphi|\bz)$; then, sample $\eta'\sim{\pi}_{}(\eta|\bw,\varphi')$.  
At the first stage, draws from $\pi_{\cut}(\varphi|\bz)$ could be obtained
by running an MCMC chain targeting the posterior density 
$\pi_{\cut}(\varphi|\bz)$.  The conditional draws of $\eta$ given $\varphi$
are then performed by running a separate MCMC chain for each sample, 
which is computationally burdensome.    The approach is reminiscent
of multiple imputation algorithms, and was originally suggested by \cite{plummer2015cuts}, who also discussed
a related tempering method of similar computational complexity.  
The sequential sampling approach above can also be thought of as 
implementing a modified Gibbs sampling algorithm
with blocks $\varphi$ and $\eta$, but where the likelihood term from the second module is dropped when
forming the full conditional distribution for $\varphi$.  
As mentioned earlier, the resulting modified conditional distributions
are not the full conditional distributions of any joint distribution in general, and if we attempt to replace
the usually intractable direct sampling of the modified conditional distributions with Metropolis-within-Gibbs steps, then
the stationary distribution of the MCMC sampler depends on the proposal used.  
A number of other authors
have investigated computation for cutting feedback \citep{jacob+oa17,liu+g20,yu2021variational,carmona+n22} and this remains
an active area of research.

The sequential definition
of the cut posterior distribution in the two-module system suggests that the statistical analysis of cut procedures
should study the marginal cut posterior density $\pi_\cut(\varphi|\bz)$ to understand cut inferences for
$\varphi$, and the conditional posterior of $\pi(\eta|\bw,\varphi)$ to understand how uncertainty about 
$\varphi$ propagates to marginal cut inferences about $\eta$.  This is the strategy we follow in the next 
section.  Such an analysis is complicated by the fact that $\pi_{\text{cut}}(\theta|\bz,\bw)$ does not arise as a posterior for a generative model, 
and therefore we must use techniques employed in the study of generalized 
posteriors to analyze $\pi_{\text{cut}}(\theta|\bz,\bw)$.

\section{The Behavior of $\pi_{\text{cut}}(\theta|\bz,\bw)$}
In this section, we explore the behavior of $\pi_{\text{cut}}(\theta|\bz,\bw)$ by separately analysing $\pi_\cut(\varphi|\bz)$, and then analysing $\pi_{}(\eta|\bw,\varphi)$, when we condition on an observed value of $\varphi$ within the high probability region of $\pi_{\cut}(\varphi|\bz)$. This yields useful insights into the behavior of cut posteriors and allows us to develop new diagnostic tools for examining these posteriors.  The normal approximations implied by our asymptotic results 
are also valuable for cut posterior computation.  As discussed in Section 2.3, 
a common way to sample the cut joint posterior distribution involves a nested
MCMC scheme where a separate MCMC chain is run to draw a
sample of $\eta$ from its posterior conditional density 
for each marginal cut posterior sample $\varphi$.  If this MCMC step
can be replaced by a draw from a normal approximation, or the normal
approximation is used to obtain a good proposal density for MCMC or
importance sampling, then this can reduce the computational burden
of commonly used methods for cut posterior computation.


\subsection{Maintained Assumptions and Main Results}\label{app:assum}

The assumptions used to obtain the following theoretical result constitutes a  generalization of the assumptions often employed to analyze the behavior of generalized posteriors; see \cite{miller2021asymptotic} for an in-depth discussion. 
We consider an asymptotic
regime in which there is a limiting ratio for the sample sizes
for the two modules, $\zeta:=\lim_{n\rightarrow\infty} n_1/n_2$, $0<\zeta<\infty$.
First, we consider the cut posterior $\pi_{\cut}(\varphi|\bz)$, and maintain the following conditions, which are sufficient to demonstrate posterior concentration.
\begin{assumption}\label{ass:ident_L}(i) There exist $\mathbb{L}(\varphi)$ such that $\sup_{\varphi\in\Phi}|n_1^{-1}L_{n_1}(\varphi)-\mathbb{L}(\varphi)|=o_p(1).$ 
	(ii) There is a unique $\varphi^\star\in\text{Int}(\Phi)$ such that for every $\delta>0$ there exists $\epsilon(\delta)>0$ so that $\sup_{\|\varphi-\varphi^\star\|\ge\delta}\{\mathbb{L}(\varphi)-\mathbb{L}(\varphi^\star)\}\le-\epsilon(\delta).$	(iii) $\pi(\varphi)$ is continuous on $\Phi$, with $\pi(\varphi^\star)>0$, and $\int_{\Phi}\|\varphi\|\pi(\varphi)\dt\varphi<\infty$. (iv) For an arbitrary $\delta>0$, and $\|\varphi-\varphi^\star\|\le\delta$, $\mathbb{L}(\varphi)$ and $L_{n_1}(\varphi)$ are twice continuously differentiable, with $\sup_{\|\varphi-\varphi^\star\|\le\delta}\|\nabla_{\varphi\varphi}^2L_{n_1}(\varphi)/n_1-\nabla_{\varphi\varphi}^2\mathbb{L}(\varphi)\|=o_p(1)$, and $-\nabla_{\varphi\varphi}^2\mathbb{L}(\varphi^\star)$ positive-definite. (v)  $\nabla_\varphi L_{n_1}(\varphi^\star)/\sqrt{n_1}=O_p(1)$.
\end{assumption} 
\begin{remark}
	Assumption \ref{ass:ident_L} is similar to the standard conditions employed to obtain posterior asymptotic normality, see, e.g., \cite{lehmann2006theory} (Ch 6.8.1) or Theorem 4 in \cite{miller2021asymptotic}, but allows $L_{n_1}(\varphi)$ to be an arbitrary criterion function. Assumptions (i)-(iii) allow for posterior concentration onto $\varphi^\star$, while the smoothness conditions in (iv)-(v) ensure this concentration occurs in a Gaussian manner. Assumption \ref{ass:ident_L} (iv) and (v) are maintained for simplicity, and can be replaced with `stochastic differentiability' assumptions at the introduction of additional technicalities. 
\end{remark}

Define $\Sigma_{11}:=\nabla_{\varphi\varphi}\mathbb{L}(\varphi^\star)$, $Z_{n_1}(\varphi^\star):=-\Sigma_{11}^{-1}\nabla_\varphi L_{n_1}(\varphi^\star)/\sqrt{n_1},$ the local parameter $\phi:=\sqrt{n_1}(\varphi-\varphi_\star)$ and its posterior $
{\pi}_{}(\phi|\bz)=\pi_{}(\varphi^\star+\phi/\sqrt{n_1}|\bz)/\sqrt{n_1}^{d_\varphi},$ which has support $\Phi_{n_1}:=\{\phi:\sqrt{n_1}(\varphi-\varphi^\star)\in\Phi\}$. Lemma \ref{lem:varphi}  states that the cut posterior $\pi_\cut(\phi|\bz)$ behaves like a Gaussian density with mean $Z_{n_1}(\varphi^\star)$, and covariance $\left[\nu\Sigma_{11}\right]^{-1}$. 

\begin{lemma}\label{lem:varphi}
	Under Assumption \ref{ass:ident_L},
	$
	\int_{\Phi_{n_1}}\|\phi\|\left|\pi_{\cut}(\phi|\bz)-N\{\phi;Z_{n_1}(\varphi^\star),\left[\nu\Sigma_{11}\right]^{-1}\}\right|\dt \phi=o_p(1).
	$
\end{lemma}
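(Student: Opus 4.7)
The plan is to follow a standard Bernstein--von Mises argument for generalized posteriors, adapted to the weighted $L^1$ norm on the local parameter. The three ingredients are: (a) posterior concentration of $\pi_{\cut}(\varphi|\bz)$ on any fixed neighbourhood of $\varphi^\star$; (b) a local Laplace expansion on that neighbourhood; and (c) a uniform tail bound strong enough to upgrade Scheff\'e-type convergence to convergence of the first absolute moment.

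First, for (a), I would invoke a Schwartz-type argument using Assumption \ref{ass:ident_L}(i)--(iii). By the uniform limit in (i) and the well-separated maximizer condition (ii), on the event $\{\sup_\varphi|L_{n_1}(\varphi)/n_1-\mathbb{L}(\varphi)|\le\epsilon(\delta)/3\}$ the ratio of the unnormalized cut posterior mass on $\{\|\varphi-\varphi^\star\|>\delta\}$ to its mass on a $1/\sqrt{n_1}$-ball around $\varphi^\star$ decays like $\exp(-c n_1)$, while the prior is continuous and strictly positive at $\varphi^\star$ by (iii). Hence, for any $\delta>0$, $\Pr\{\pi_{\cut}(\|\varphi-\varphi^\star\|>\delta\,|\,\bz)>\epsilon\}\to0$. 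Rescaling to $\phi=\sqrt{n_1}(\varphi-\varphi^\star)$, the posterior mass of $\pi_{\cut}(\phi|\bz)$ outside $\{\|\phi\|\le\delta\sqrt{n_1}\}$ is $o_p(1)$.

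For (b), on $\{\|\varphi-\varphi^\star\|\le\delta\}$ Taylor-expand $L_{n_1}$ using (iv)--(v):
\[
L_{n_1}(\varphi^\star+\phi/\sqrt{n_1})=L_{n_1}(\varphi^\star)+\nabla_\varphi L_{n_1}(\varphi^\star)^\top\phi/\sqrt{n_1}+\tfrac{1}{2}\phi^\top\bigl(\nabla^2_{\varphi\varphi} L_{n_1}(\tilde\varphi)/n_1\bigr)\phi.
\]
The uniform convergence of the Hessian in (iv) replaces $\nabla^2_{\varphi\varphi}L_{n_1}(\tilde\varphi)/n_1$ by $-\Sigma_{11}$ with a $o_p(1)$ remainder uniform on compacts of $\phi$. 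Completing the square and noting that $Z_{n_1}(\varphi^\star)=O_p(1)$ by (v), the exponent of the unnormalized cut posterior becomes
\[
-\tfrac{\nu}{2}(\phi-Z_{n_1}(\varphi^\star))^\top\Sigma_{11}(\phi-Z_{n_1}(\varphi^\star))+R_{n_1}(\phi),
\]
with $R_{n_1}(\phi)=o_p(1)$ on any compact set. Continuity of $\pi(\varphi)$ at $\varphi^\star$ in (iii) lets me replace $\pi(\varphi^\star+\phi/\sqrt{n_1})$ by $\pi(\varphi^\star)$ with negligible error, so that $\pi_{\cut}(\phi|\bz)$ agrees with $N\{\phi;Z_{n_1}(\varphi^\star),[\nu\Sigma_{11}]^{-1}\}$ up to a $1+o_p(1)$ factor on any compact set; Scheff\'e's lemma then gives total-variation convergence restricted to compact sets.

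The main obstacle, and step (c), is promoting total-variation convergence to the weighted $L^1$ statement involving $\|\phi\|$. I would split the integral at $\|\phi\|\le M$ and $\|\phi\|>M$. The Gaussian tail is controlled using $Z_{n_1}(\varphi^\star)=O_p(1)$ and positive-definiteness of $\Sigma_{11}$, so for any $\varepsilon$ one can pick $M$ with $\int_{\|\phi\|>M}\|\phi\|N\{\phi;Z_{n_1},[\nu\Sigma_{11}]^{-1}\}\dt\phi<\varepsilon$ with high probability. For the cut posterior tail, I would combine the concentration step (a) with the prior moment condition $\int\|\varphi\|\pi(\varphi)\dt\varphi<\infty$ in (iii): the contribution from $\{\|\varphi-\varphi^\star\|>\delta\}$ is bounded by the prior first moment times $\exp(-cn_1)$, while inside the shrinking neighbourhood $\{\delta/\sqrt{n_1}\le\|\varphi-\varphi^\star\|\le\delta\}$ the local quadratic lower bound on $-L_{n_1}/n_1$ (from the negative-definite Hessian in (iv)) yields Gaussian-type tails for $\|\phi\|$. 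Together these give uniform integrability of $\|\phi\|$ under $\pi_{\cut}(\phi|\bz)$, and combined with the compact-set convergence from (b), the desired weighted $L^1$ bound follows. This uniform integrability step is the delicate one, since the pointwise density expansion is not by itself enough to control first absolute moments.
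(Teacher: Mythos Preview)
Your proposal is correct, and the overall structure --- posterior concentration, local quadratic expansion, and a three-region tail analysis to obtain the weighted $L^1$ bound --- is precisely the argument of Theorem~1 in \cite{chernozhukov2003mcmc}. The paper's own proof is simply a one-line verification that Assumption~\ref{ass:ident_L} implies the hypotheses of that theorem (via their Lemmas~1 and~2), so you have re-derived from scratch what the paper obtains by citation. The same three-region structure is what the paper spells out in full for its Theorem~\ref{prop:bvm1}, so your sketch is in fact closer in spirit to the paper's detailed proof of the conditional result than to its proof of this lemma. What your route buys is self-containment and a clear accounting of where each part of Assumption~\ref{ass:ident_L} enters --- in particular the prior moment condition $\int\|\varphi\|\pi(\varphi)\dt\varphi<\infty$, which is exactly what is needed to control the far tail $\|\phi\|>\delta\sqrt{n_1}$ in the weighted norm.

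One minor organizational point: in step~(b) you claim $\pi_{\cut}(\phi|\bz)$ agrees with the Gaussian up to a $1+o_p(1)$ factor on compacts and then invoke Scheff\'e, but the normalized density ratio involves the normalizing constant $C_{n_1}$, whose convergence in turn requires the tail control of step~(c). The clean way around this circularity --- and the way both \cite{chernozhukov2003mcmc} and the paper's proof of Theorem~\ref{prop:bvm1} proceed --- is to bound the weighted $L^1$ distance between the \emph{unnormalized} densities first (their $J_{1n}$), deduce convergence of $C_{n_1}$ from that, and only then pass to the normalized statement. All the ingredients you list are exactly what is needed for that reorganization, so this is a presentation issue rather than a gap.
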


The conditioning value of $\varphi$ in $\pi_{}(\eta|\bw,\varphi)$ plays a key role in the behavior of posterior. To demonstrate this, we view $M_{n_2}(\eta,\varphi)$ as being indexed by a fixed $\varphi\in\Phi_\delta$, and to reinforce this perspective we 
use the notation $M_{n_2}(\eta|\varphi):=M_{n_2}(\eta,\varphi)$. 
Let $\Phi_\delta:=\{\varphi\in\Phi:\|\varphi-\varphi^\star\|\le\delta\}$ denote an arbitrary $\delta$-neighborhood of $\varphi^\star$, and consider the following regularity conditions on $M_{n_2}(\eta|\varphi)$. 

\begin{assumption}\label{ass:ident_M}(i) There exist $\mathbb{M}(\eta|\varphi)$ such that, for some $\delta>0$, $\sup_{\varphi\in\Phi_\delta,\eta\in\mathcal{E}}|{n_2}^{-1} M_{n_2}(\eta|\varphi)-\mathbb{M}(\eta|\varphi)|=o_p(1).$ 
	(ii) Given $\delta_1>0$, for each $\varphi\in\Phi_{\delta_1}$ there is an $\eta^\star_\varphi\in\text{Int}(\mathcal{E})$ such that for any $\delta_2>0$ there exist $\epsilon(\delta_1,\delta_2)>0$, so that $\sup_{\varphi\in\Phi_{\delta_1}}\sup_{\|\eta-\eta_\varphi^\star\|\ge\delta_2}\{\mathbb{M}(\eta|\varphi)-\mathbb{M}(\eta_\varphi^\star|\varphi)\}\le-\epsilon(\delta_1,\delta_2).$	
\end{assumption}

\begin{assumption}\label{ass:prior} (i)  For some $\delta>0$, and each $\varphi\in\Phi_\delta$, $\pi(\eta|\varphi)$ is continuous in $\eta$. (ii) $\sup_{\varphi\in\Phi_{\delta_1}}\int_{\mathcal{E}}\|\eta\|\pi(\eta|\varphi)\dt\eta<\infty$. 
	
\end{assumption}
\begin{remark}
	Assumption \ref{ass:ident_M} constitutes a set of conditions on the conditional loss function $M_{n_2}(\eta | \varphi)$, which, together with the prior condition in Assumption \ref{ass:prior}, ensure that for a fixed $\varphi\in\Phi_\delta$ the conditional posterior concentrates onto $\eta^\star_\varphi$.
	 These conditions imply that if we study $\pi(\eta\mid\varphi,\bw)$ in a neighbourhood of $\varphi^\star$, the conditional posterior concentrates mass near $\eta^\star_\varphi$. The form of this posterior means that this conditional interpretation of concentration is a more natural way of representing posterior behavior than the conventional joint analysis. 
	
	Technically, Assumption \ref{ass:ident_M}(i) assumes uniform convergence of the loss function, but where we restrict the $\varphi$ parameter to the neighbourhood $\Phi_\delta$. Of course, a sufficient condition for this would simply be uniform convergence of $M_{n_2}(\eta | \varphi)$ across $\mathcal{E}\times\Phi$. Assumption \ref{ass:ident_M}(ii) assumes that for each $\varphi\in\Phi_\delta$, the limit criterion $\mathbb{M}(\eta | \varphi)$ has a unique optimum. This ``conditioning on'' $\varphi\in\Phi_\delta$ then allows us to view $M_{n_2}(\eta | \varphi)$ as being indexed by a fixed parameter value, and we can then employ similar regularity conditions to those used in the study of frequentist estimation theory but at an arbitrary $\varphi\in\Phi_\delta$; we refer to \cite{Portier2016} for a discussion and several examples. {We remark that, in cases where $\mathbb{M}(\eta | \varphi)$ is known, the implicit function theorem could be used to obtain the mapping $\varphi\mapsto\eta_\varphi^\star$ either numerically or, if available, analytically, and would allow for immediate verification of Assumption \ref{ass:ident_M}(ii).} 

	Assumption \ref{ass:prior}(i) is a standard regularity condition, while Assumption \ref{ass:prior}(ii) implies that the conditional prior has sufficient moments. A sufficient condition for the latter condition is that the posterior $\pi(\eta | \varphi)=\pi(\eta)$ and $\int \|\eta\|\pi(\eta)\dt\eta<\infty$. 	
\end{remark}

\begin{assumption}\label{ass:expand}For some $\delta_1,\delta_2>0$, the following are satisfied. There exist a vector function $\Delta_{n_2}(\eta|\varphi)$, and matrix function $J(\eta|\varphi)$ such that
	$$
	M_{n_2}(\eta|\varphi)-M_{n_2}(\eta^\star_\varphi|\varphi)=(\eta-\eta^\star_\varphi)^\top\Delta_{n_2}(\eta^\star_\varphi|\varphi)-\frac{n_2}{2}(\eta-\eta^\star_\varphi)^\top J(\eta|\varphi)(\eta-\eta^\star_\varphi)+R_{n_2}(\eta,\varphi).
	$$ 
	
	\noindent (i) for all $\varphi\in\Phi_{\delta_1}$, $\Delta_{n_2}(\eta^\star_\varphi|\varphi)/\sqrt{n_2}=O_p(1)$; 
	
	\noindent (ii) the map $\eta\mapsto J(\eta|\varphi)$ is continuous for all $\|\eta-\eta^\star_\varphi\|\le\delta_2$,  for each $\varphi\in\Phi_{\delta_1}$, and $J(\eta^\star_\varphi|\varphi)$ is positive-definite for each $\varphi\in\Phi_{\delta_1}$; 
	
	\noindent (iii) for any $\delta_2>0$,  $\sup_{\varphi\in\Phi_{\delta_1}}\sup_{\|\eta-\eta^\star_\varphi\|\le{\delta_2}}{R_{n_2}(\eta,\varphi)}/[{1+n_2\|\eta-\eta^\star_\varphi\|^2}]=o_p(1).$
\end{assumption}

\begin{remark}
	Assumption \ref{ass:expand} ensures that $M_{n_2}(\eta | \varphi)$ admits a valid quadratic expansion around $\eta_\varphi^\star$ for each $\varphi\in\Phi_\delta$; a sufficient condition for this is that, for each $\varphi\in\Phi$, $M_{n_2}(\eta | \varphi)$ is twice continuously differentiable in $\eta$, and that the matrix of second derivatives $-\nabla_{\eta\eta}M_{n_2}(\eta | \varphi)/{n_2}$ uniformly converges to its expected counterpart, denoted as $J(\eta | \varphi)$. Assumption \ref{ass:expand}(i) requires that the first term in the quadratic expansion is asymptotically bounded for each pair $(\eta_\varphi^\star,\varphi)$. A sufficient condition for this is that for some $\delta_1,\delta_2$, the class $\mathcal{D}:=\{z\mapsto \Delta_{n_2}(\eta | \varphi)(z):\|\eta-\eta_\varphi^\star\|\le\delta_1,\varphi\in\Phi_{\delta_2},\eta\in\mathcal{E}\}$ is $P$-Donsker. Assumption \ref{ass:expand}(ii) requires continuity of the map $\eta\mapsto J(\eta| \varphi)$, which is satisfied if $M_{n_2}(\eta | \varphi)$ is twice continuously differentiable in $\eta$ for each $\varphi\in\Phi_\delta$.  Assumption \ref{ass:expand}(iii) gives control on the remainder term, and will be satisfied, for instance, when  $M_{n_2}(\eta | \varphi)$ is twice continuously differentiable.
\end{remark}	

The above assumptions allow us to study the large sample behavior of the translated posterior $\pi(\eta-\eta^\star_\varphi|\bw,\varphi)$. 
To present this behavior as succinctly as possible, define $Z_{n_2}(\eta^\star_\varphi|\varphi):=J(\eta^\star_\varphi|\varphi)^{-1}\Delta_{n_2}(\eta^\star_\varphi|\varphi)/\sqrt{n_2},$ as well as the local parameter $t:=\sqrt{n_2}(\eta-\eta^\star_\varphi)$ and its posterior $
{\pi}_{}(t|\bw,\varphi)=\pi_{}(\eta^\star_{\varphi}+t/\sqrt{n_2}|\bw,\varphi)/\sqrt{n_2}^{d_\eta},$ which has support where $\mathcal{E}_{n_2}:=\{t=\sqrt{n_2}(\eta-\eta^\star_\varphi):\eta\in\mathcal{E},\varphi\in\Phi_\delta\}$.
\begin{theorem}\label{prop:bvm1}
	If for some $\delta>0$, Assumptions \ref{ass:ident_L}-\ref{ass:expand} are satisfied for $\varphi\in\Phi_\delta$, then
	$
	\int_{\mathcal{E}_{n_2}}\|t\| \left|\pi_{}(t |\bw,\varphi)-N\{t;Z_{n_2}(\eta^\star_\varphi|\varphi),[\nu J(\eta^\star_\varphi|\varphi)]^{-1}\}\right|\dt t=o_{p}(1).
	$
\end{theorem}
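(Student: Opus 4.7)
The plan is to carry out a Bernstein--von Mises argument for the conditional posterior $\pi(\eta\mid\bw,\varphi)$ at a fixed $\varphi \in \Phi_\delta$, treating $\varphi$ as a deterministic index throughout. The structure mirrors the usual generalized--posterior BvM (as in Miller 2021) but applied uniformly over $\varphi \in \Phi_\delta$, and must be upgraded from a plain total variation bound to a $\|t\|$-weighted one. I would not use Lemma \ref{lem:varphi} in an essential way here; Assumption \ref{ass:ident_L} enters only to fix the neighbourhood $\Phi_\delta$ on which the other assumptions are imposed.

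First I would establish conditional posterior consistency: for each $\varphi \in \Phi_\delta$, $\pi(\eta\mid\bw,\varphi)$ concentrates on $\eta^\star_\varphi$. This follows from the uniform convergence in Assumption \ref{ass:ident_M}(i), the well-separated maximum in Assumption \ref{ass:ident_M}(ii), and the prior continuity in Assumption \ref{ass:prior}(i), by the standard testing/slicing argument: split $\mathcal{E}$ into a shrinking neighbourhood of $\eta^\star_\varphi$ and its complement, use the separation to get an exponential bound on the mass outside, and use prior positivity near $\eta^\star_\varphi$ to lower bound the normalizer. The key output is that, for any $\delta_2 \downarrow 0$ slowly enough, the posterior mass on $\{\|\eta-\eta^\star_\varphi\|>\delta_2\}$ is $o_p(1)$.

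Second, I would change variables to the local parameter $t=\sqrt{n_2}(\eta-\eta^\star_\varphi)$. Using Assumption \ref{ass:expand} to expand $M_{n_2}(\eta\mid\varphi)$ around $\eta^\star_\varphi$ and completing the square,
\begin{equation*}
\nu\bigl[M_{n_2}(\eta\mid\varphi)-M_{n_2}(\eta^\star_\varphi\mid\varphi)\bigr]
= -\tfrac{\nu}{2}(t-Z_{n_2})^\top J(\eta^\star_\varphi\mid\varphi)(t-Z_{n_2}) + \tfrac{\nu}{2}Z_{n_2}^\top J Z_{n_2} + \nu R_{n_2},
\end{equation*}
where $Z_{n_2}=Z_{n_2}(\eta^\star_\varphi\mid\varphi)$. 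On the concentration neighborhood the remainder satisfies $\nu R_{n_2}(\eta,\varphi)=o_p(1+\|t\|^2)$ by Assumption \ref{ass:expand}(iii), and the prior ratio $\pi(\eta^\star_\varphi+t/\sqrt{n_2}\mid\varphi)/\pi(\eta^\star_\varphi\mid\varphi)\to 1$ pointwise in $t$ by Assumption \ref{ass:prior}(i). Combined with continuity of $\eta\mapsto J(\eta\mid\varphi)$ from Assumption \ref{ass:expand}(ii), the unnormalized posterior $\pi(t\mid\bw,\varphi)$ is pointwise asymptotic to the Gaussian density $N\{t; Z_{n_2}, [\nu J]^{-1}\}$ up to a constant. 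A careful control of the normalizing constant (bounding the integral against a slightly inflated Gaussian on the local region, and using the concentration bound from step one on the complement) gives convergence in total variation:
\begin{equation*}
\int_{\mathcal{E}_{n_2}} \bigl|\pi(t\mid\bw,\varphi)-N\{t;Z_{n_2},[\nu J]^{-1}\}\bigr|\, \dt t = o_p(1).
\end{equation*}

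The main obstacle, and the step that deserves the most care, is upgrading this total variation statement to the $\|t\|$-weighted one in the theorem, since $\|t\|$ is unbounded. I would argue uniform integrability of $\|t\|$ under both measures: for the Gaussian limit this is automatic since $Z_{n_2}=O_p(1)$ by Assumption \ref{ass:expand}(i) and $J(\eta^\star_\varphi\mid\varphi)$ is positive definite; for the posterior, I would split the integral at a slowly growing truncation level $M_n\to\infty$. Inside $\{\|t\|\le M_n\}$, multiplying the integrand by $\|t\|\le M_n$ preserves the $o_p$ rate from the TV bound provided $M_n$ grows slowly enough. Outside $\{\|t\|\le M_n\}$, the posterior tail integral $\int_{\|t\|>M_n}\|t\|\pi(t\mid\bw,\varphi)\dt t$ is bounded using the concentration rate from step one together with the prior moment Assumption \ref{ass:prior}(ii): write the tail contribution as a ratio whose numerator is controlled by $\int\|\eta\|\pi(\eta\mid\varphi)\dt\eta$ times an exponentially small maximum-likelihood factor on $\{\|\eta-\eta^\star_\varphi\|>M_n/\sqrt{n_2}\}$, and whose denominator is bounded below via Laplace-style lower bounds on the normalizer $m_\eta(\bw\mid\varphi)$. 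The same argument handles the Gaussian tail trivially, and combining the two completes the $\|t\|$-weighted bound.
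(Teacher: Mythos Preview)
Your proposal uses the same core ingredients as the paper's proof --- consistency via Assumptions \ref{ass:ident_M}--\ref{ass:prior}, the quadratic expansion of Assumption \ref{ass:expand} with remainder control, and the prior first-moment bound for the far tail --- but organizes them differently. The paper (following \cite{chernozhukov2003mcmc}) works directly with the $\|t\|$-weighted integrand and splits $\mathcal{E}_{n_2}$ into three regions in a single pass: $\|t\|\le h$ (pointwise convergence), $h<\|t\|\le\gamma\sqrt{n_2}$ (Assumption \ref{ass:expand}(iii) makes the remainder dominated by a fraction of the quadratic, so the integrand is bounded by $C\exp\{-t^\top[\nu J]t/4\}$), and $\|t\|>\gamma\sqrt{n_2}$ (exponential separation from Assumption \ref{ass:ident_M}(ii) plus the prior moment in Assumption \ref{ass:prior}(ii)). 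Your two-step route --- TV first, then upgrade via truncation at a slowly growing $M_n$ --- is valid but slightly less economical, since the upgrade step forces you to revisit the tail $\{\|t\|>M_n\}$. On that point your outline is imprecise: the ``exponentially small maximum-likelihood factor'' you invoke from Assumption \ref{ass:ident_M}(ii) only holds on $\{\|\eta-\eta^\star_\varphi\|>\delta_2\}$ for \emph{fixed} $\delta_2$, i.e.\ on $\{\|t\|>\delta_2\sqrt{n_2}\}$, not on all of $\{\|t\|>M_n\}$ when $M_n=o(\sqrt{n_2})$. For the intermediate shell $\{M_n<\|t\|\le\gamma\sqrt{n_2}\}$ you must instead reuse Assumption \ref{ass:expand}(iii) to dominate the posterior density by a Gaussian tail, exactly as in the paper's Region~2. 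Once you insert that step your argument goes through; the paper's direct three-region decomposition simply avoids having to do this control twice.
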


Theorem \ref{prop:bvm1} demonstrates that in large samples $\pi_{}(\eta|\bw,\varphi)$ behaves like a Gaussian density with a mean and variance \textit{that both depend on $\varphi$}. This result is useful for at least two reasons. Firstly, the only other result on the behavior of cut posteriors of which we are aware, \cite{pompe2021asymptotics},  demonstrates that in large samples the cut posterior for $\theta=(\varphi^\top,\eta^\top)^\top$ is Gaussian with a variance \textit{that depends on fixed $\varphi^\star$ and $\eta^\star=\eta^\star_{\varphi^\star}$}. {(Corollary 1 in the supplementary material gives a similar result for the case of generalized posteriors.)} That is, in a conventional multivariate normal (Laplace) approximation of the joint cut posterior, the induced conditional posterior approximation results in a covariance matrix \textit{that does not depend} on the conditioning value $\varphi$ but only on $\varphi^\star$. {Since in small-to-medium sample sizes  the conditional posterior $\pi(\eta| \bw,\varphi)$ will have a mean and variance that changes with the value of $\varphi$, such a global approximation is unlikely to be accurate.}

Secondly, the conditional approximation in Theorem \ref{prop:bvm1} can be directly used in cases where accessing $\pi_{}(\eta|\bw,\varphi)$ may be difficult but where $Z_{n_2}(\eta^\star_\varphi|\varphi)$ and $J(\eta^\star_\varphi|\varphi)$ can be easily estimated. The latter may occur, for example, in cases where the MCMC sampler has a difficult time sampling $\pi_{}(\eta|\bw,\varphi)$ at the particular value of $\varphi$ on which we are conditioning. While direct use of Theorem \ref{prop:bvm1} involves 
a computational approximation to the actual conditional posterior distribution, 
the normal approximation can also useful
as a proposal distribution for MCMC or importance sampling.


\subsection{Calibration of learning rates}

The uncertainty quantification of the generalized cut posterior
density $\pi_{\text{cut}}(\theta|\bz,\bw)$ depends crucially
on the choice of learning rates, which we now discuss.
Consider the loss likelihood term in \eqref{fullposterior}, where 
$\nu$ and $\nu'$ need to be chosen.  
\cite{lyddon+hw19}, inspired by an earlier method
of \cite{holmes+w17}, suggest to choose learning rates by matching the
Fisher information number for the generalized Bayes
update to the Fisher information number from an update based on 
a loss likelihood bootstrap approach, asymptotically.  
We do not describe here in detail the reasoning behind the
method of \cite{lyddon+hw19}, but the key to its application
here for estimation of multiple learning rates is to exploit
the modular structure of the model.   We 
set the first learning rate $\nu$ based on the prior
to posterior update for $\varphi$ in the first module, and set
the second learning rate $\nu'$ based on the conditional prior
to conditional posterior update for $\eta$ in the second module, fixing $\varphi$
to an estimate based on module one.

To state the idealized learning rates we require some additional notation.  Let
$$\Sigma_{11}=-\nabla^2_{\varphi \varphi}\mathbb{L}(\varphi^*),\;\;\;
\Sigma_{22}=-\nabla^2_{\eta \eta}\mathbb{M}(\eta^*|\varphi^*),\;\;\;
\Sigma_{12}=\nabla^2_{\varphi \eta}\mathbb{M}(\eta^*|\varphi^*),$$
$$\Psi_{11}=\lim_{n\rightarrow \infty} \text{Cov}(L_{n_1}(\varphi^*)/\sqrt{n_1}),\;\;\;
\Psi_{22}=\lim_{n\rightarrow \infty} \text{Cov}(\Delta_{n_2}(\eta^*|\varphi^*)/\sqrt{n_2}).$$
With this notation, if we apply the method of \cite{lyddon+hw19}
for choosing $\nu$ based on the update 
for the parameter $\varphi$ using the first module only, 
we obtain the ideal choice
$$\nu={\text{tr}(\Sigma_{11} {\Psi_{11}}^{-1} \Sigma_{11})}/{\text{tr}( \Sigma_{11})}.$$ 
We can estimate $\Sigma_{11}$ by ${n_1}^{-1}\nabla^2_{\varphi \varphi}L_{n_1}(\widehat{\varphi})$, where $\widehat{\varphi}=\arg\max_\varphi L_{n_1}(\varphi)$.  
To estimate $\Psi_{11}$, we could use ${n_1}^{-1}\sum_{i=1}^{n_1}
\nabla_{\varphi}\ell(z_i;\widehat{\varphi}){\nabla_{\varphi}\ell(z_i;\widehat{\varphi})}^\top$, although $\Psi_{11}$ can also be estimated in other ways.   

After calibrating $\nu$ based on the first module, we can calibrate
$\nu'$ by considering a conditional update of our beliefs for
$\eta$ in the second module, conditional on an estimate of
$\varphi$ from the first module, $\varphi=\widehat{\varphi}$ say.  
Matching the Fisher information number suggests
choosing $\nu'$ as 
$\nu'={\text{tr}(\Sigma_{22}{\Psi_{22}}^{-1} \Sigma_{22})}/{\text{tr}(\Sigma_{22})}.$
To estimate $\Sigma_{22}$ we can use
${n_2}^{-1}\nabla^2_{\eta \eta}M_{n_2}(\widehat{\eta}_{\widehat{\varphi}}|\widehat{\varphi})$,
where $\widehat{\eta}_{\widehat{\varphi}}=\arg\max_{\eta} M_{n_2}(\eta|\widehat{\varphi})$,
and $\Psi_{22}$ can be estimated by
${n_2}^{-1}\sum_{i=1}^{n_2}
\nabla_{\eta}m(w_i;\widehat{\eta}_{\widehat{\varphi}},\widehat{\varphi})
{\nabla_{\eta}m(w_i;\widehat{\eta}_{\widehat{\varphi}},\widehat{\varphi})}^\top$, or
using some other method.

In a conventional generalized Bayesian analysis, there is only one learning
rate to choose, but here there are two.  This makes choosing learning rates
more difficult, but also makes the modular generalized Bayesian approach
more flexible.  The way that marginal inferences about $\varphi$ and conditional
inferences for $\eta$ given $\varphi$ can be done separately in a modular
approach for two different loss functions 
makes the choice of two learning rates feasible. {We thank two anonymous
referees for their insight in encouraging us to explore further the choice
of separate learning rates for different modules.}

\subsection{Diagnostics for $\eta|\bw,\varphi$: Understanding Uncertainty Propagation}\label{sec:diag1}
Theorem \ref{prop:bvm1} demonstrates that even in large samples the behavior of $\pi(\eta|\bw,\varphi)$ depends on the value of $\varphi$ on which we are conditioning. Moreover, for different values of $\varphi$, the resulting mean and variance can vary substantially. It is therefore useful to understand how our uncertainty about $\varphi$  propagates into our inferences for $\eta$.

Using the result of Theorem \ref{prop:bvm1}, this uncertainty can be viewed in many different ways. For instance, if $\eta$ is low-dimensional, we can visualise the impact of $\varphi$ on the posterior for $\eta|\bw,\varphi$ by viewing the kernel $$|J(\eta^\star_\varphi|\varphi)|^{1/2}\exp\left\{-{{n_2}\cdot\nu\cdot}(\eta-\eta^\star_\varphi)^\top J(\eta^\star_\varphi|\varphi)^{}(\eta-\eta^\star_\varphi)/2\right\},$$ across a given range of values for $\varphi$. The resulting plot will demonstrate how the cut posterior for $\eta$ changes as the conditioning value of $\varphi$ changes. 

The above approximation cannot be directly accessed, since $\eta^\star_\varphi$ and $J(\eta^\star_\varphi|\varphi)$ are unknown in practice. However,  in cases where $M_{n_2}(\eta|\varphi)$ is twice continuously differentiable in $\eta$, it is simple to estimate $\eta^\star_\varphi$ and $J(\eta^\star_\varphi|\varphi)$ by their empirical counterparts  $\widehat\eta_\varphi:=\argmax_{\eta}M_{n_2}(\eta|\varphi)$, and  $J_{n_2}(\widehat\eta_\varphi|\varphi):={n_2}^{-1}\nabla^2_{\eta\eta}M_{n_2}(\widehat{\eta}_\varphi |\varphi)$ respectively.

The large sample approximation can also be used to visualize the behavior of specific functionals of interest, e.g., moments of $\eta|\bw,\varphi$. 
As an example, suppose that $\eta$ is a scalar for simplicity and 
that we are interested in understanding how the variance of its 
cut posterior depends on $\varphi$.  Using
the law of total variance, we can write
$$\text{Var}(\eta)=E(\text{Var}(\eta|\varphi))+\text{Var}(E(\eta|\varphi)),$$
(where expectations in this expression are with respect to the cut
posterior) and for draws $\varphi^{(s)}\sim \pi_{\text{cut}}(\varphi|\bz)$, 
$s=1,\dots, S$, we can plot histograms of $\text{Var}(\eta|\varphi^{(s)})$
and $E(\eta|\varphi^{(s)})$ to understand 
how variability in $\eta$ relates to $\varphi$.  
The conditional means and variances can be approximated by the
normal approximations obtained from Theorem 1.  In an example 
in Section 4.1, we discuss diagnostics of this type, as well 
as methods for understanding posterior skewness in the parameter 
in the second module.

Credible sets of $\pi_{}(\eta|\bw,\varphi)$ are also of particular interest. For $\alpha\in(0,1)$, let $C^\eta_\alpha(\varphi)$ be the set of $\eta$ such that 
$
\int_{C^\eta_\alpha(\varphi)}\pi_{}(\eta|\bw,\varphi)\dt\eta=1-\alpha.
$ Then, we can visualize $C^\eta_\alpha(\varphi)$ across several values of $\varphi\in\Phi$ to understand how the shape of credible sets change as $\varphi$ varies. In the case of credible sets, the normal approximation can be directly used to obtain an estimate of $C^\eta_\alpha(\varphi)$, and 
an algorithm for this is given in Appendix C.

Without Theorem \ref{prop:bvm1}, constructing functionals of $\eta|\bw,\varphi$ at different values of $\varphi$ usually requires running an MCMC sampling algorithm to obtain draws of $\eta|\bw,\varphi$.  Theorem \ref{prop:bvm1} gives a computationally cheap alternative: we simply replace $\pi_{}(\eta|\bw,\varphi)$ in the definition of the functional by the normal approximation in Theorem \ref{prop:bvm1}, for which samples can be drawn directly.

\subsection{Incorporating Feedback via  Tempering}\label{sec:semimod}
Recently, \cite{carmona2020semi} have proposed the use of ``semi-modular" posterior distributions as an extension of cut-model inference; \cite{nicholls2022valid} extend this construction to prequentially additive loss functions, and \cite{carmona+n22} investigate the use of normalizing flows for their computation.   
In this section we explain semi-modular inference and 
introduce a new type of semi-modular posterior, for which
fast computation is possible using the asymptotic approximations developed in Section 3.1.  

Consider once again the two module system, and 
point estimation for the shared parameter $\varphi$ based on 
full and cut posterior distributions.  
The intuition behind semi-modular inference is that if the degree of 
misspecification is not severe, then the bias of the full posterior estimator
may only be moderate, while its variance
might be greatly reduced compared to the cut posterior estimator.  In this case,  
full posterior estimates may have better frequentist
performance in managing a bias-variance
trade-off.    If the misspecification is serious, 
however, full posterior estimation may have a large bias, and 
estimation based on the cut model may be preferred.
Instead of making a binary choice between the full and cut 
posterior density, it might be better to modulate
the influence of the misspecified module in a more continuous
way, using an ``influence parameter" denoted here as $\gamma\in [0,1]$.  
In the proposal of \cite{carmona2020semi}, the choice $\gamma=0$ 
results in the cut posterior, whereas $\gamma=1$ 
corresponds to the full posterior, so that the semi-modular posterior interpolates
between cut and full posterior based on the influence parameter.  
\cite{nicholls2022valid} also explore
some more Bayesian properties of 
validity and order-coherence of semi-modular posteriors 
for their original approach and some alternatives.  

The semi-modular method of \cite{carmona2020semi} proceeds in two stages. First, an auxiliary parameter $\tilde\eta$ is introduced that replicates the role of $\eta$ in the second module.  Extending the discussion of \cite{carmona2020semi} to the generalized Bayes setting, 
they would construct a posterior density for $(\varphi,\tilde\eta)$ as
\begin{align}
	\pi_{\text{pow},\gamma}(\varphi,\tilde\eta|\bz,\bw) \propto 
	\pi(\varphi)\pi(\tilde\eta|\varphi)\exp\left\{\nu L_{n_1}(\varphi)\right\}\exp\left\{\nu M_{n_2}(\tilde\eta|\varphi)\right\}^\gamma, \label{CN-auxiliary}
\end{align}
where $\gamma\in [0,1]$ is an influence parameter which controls how much
of the information in the second module is used in making marginal
inferences about $\varphi$.  A joint density for $(\varphi,\tilde\eta, \eta)$
is then constructed by multiplying \eqref{CN-auxiliary} by the conditional
posterior density for $\eta|\varphi$, followed by integrating out $\tilde\eta$, 
to obtain the semi-modular posterior:
\begin{align}
	\pi_\gamma^{\text{CN}}(\varphi,\eta|\bz,\bw)=\int \pi_{\text{pow},\gamma}
	(\varphi,\tilde\eta|\bz,\bw)d\tilde\eta \times \pi(\eta|\varphi,\bw). \label{CN-smi}
\end{align}
It is easy to see that if $\gamma=0$, \eqref{CN-smi} is the cut posterior
density, whereas $\gamma=1$ gives the conventional joint posterior
density.  

\subsubsection{Marginal semi-modular inference}

We now introduce an alternative semi-modular approach, where no auxiliary parameter $\tilde{\eta}$ is introduced and computation can be conveniently
done using the large sample approximations developed in Section 3.1.   Recall that the marginal generalized posterior for $\varphi$ can be written as $\pi_{}(\varphi|\bz,\bw)=\int_{\mathcal{E}}\pi(\varphi,\eta|\by)\dt\eta$. Using the decomposition of $\pi(\varphi,\eta|\by)$ in equation \eqref{eq:decomp}, and the fact that $\int_{\mathcal{E}}\pi(\eta|\bw,\varphi)\dt\eta=1$ for each $\varphi\in\Phi$, we can rewrite $\pi_{}(\varphi|\bz,\bw)$ as 
$$
\pi_{}(\varphi|\bz,\bw)=\pi_{\cut}(\varphi|\bz)\tilde{p}(\bw|\varphi)\propto\pi_\cut(\varphi|\bz){m_\eta(\bw|\varphi)},
$$where $m_\eta(\bw|\varphi)$ was defined in equation \eqref{eq:post}.

The above suggests that a type of marginal semi-modular inference for $\varphi$ can proceed via the tempered marginal posterior
\begin{align}
	\pi^{\mathrm{M}}_\gamma(\varphi|\bz,\bw) & \propto\pi_{\cut}(\varphi|\bz)\tilde{p}(\bw|\varphi)^\gamma\propto\pi_\cut(\varphi|\bz){m_\eta(\bw|\varphi)}^\gamma,\;\gamma\in[0,1],\label{smi1}
\end{align} where the notation `M' makes clear that we are only considering a marginal semi-modular posterior.  
The marginal semi-modular posterior $\pi^{\mathrm{M}}_\gamma(\varphi|\bz,\bw)$ attenuates the impact of the feedback term $\tilde{p}(\bw|\varphi)$ by tempering its contribution in the marginal posterior, interpolating between the cut posterior marginal $\pi_{\text{cut}}(\varphi|\bz)$ at $\gamma=0$, and the conventional marginal posterior $\pi(\varphi|\bz,\bw)$ at $\gamma=1$.  

\subsubsection{Computation for marginal semi-modular inference}

The difficulty in computing  $\pi^{\mathrm{M}}_\gamma(\varphi|\bz,\bw)$  lies in calculating $\tilde{p}(\bw|\varphi)$, which is intractable in cases where $m_\eta(\bw|\varphi)$ is intractable. However, observe that the form of $m_\eta(\bw|\varphi)$ is akin to that of a ``marginal likelihood'' conditioned on a fixed $\varphi$. Indeed, $m_\eta(\bw|\varphi)$ satisfies the following tautological relationship:
\begin{align}
	m_\eta(\bw|\varphi) & ={\pi(\eta|\varphi)\exp\{\nu M_{n_2}(\eta|\varphi)\}}/{\pi(\eta|\bw,\varphi)}, \label{tautology}
\end{align}
which holds for any values of $\eta$ and $\varphi$. The above equation resembles the ``basic marginal likelihood identity'' used in \cite{chib1995marginal} to estimate the marginal log-likelihood, and is related to the ``candidate's formula'' presented in \cite{besag1989candidate}. Following \cite{chib1995marginal}, taking logarithms of \eqref{tautology} and considering
a chosen value $\eta^\ast$ of $\eta$ in the high probability region for
$\pi(\eta|\bw,\varphi)$, we obtain 
\begin{eqnarray}\label{eq:marg}
	\ln m_\eta(\bw|\varphi)=\ln\pi(\eta^\ast|\varphi)+\nu M_{n_2}(\eta^\ast|\varphi)-\ln \pi(\eta^\ast|\bw,\varphi).
\end{eqnarray}
Given a choice of $\eta^\ast$, replacing
$\ln \pi(\eta^\star|\bw,\varphi)$ with an estimate of it in \eqref{eq:marg}, results
in an estimate $\ln\widehat{m}_\eta(\bw|\varphi)$ of $\ln m_\eta(\bw|\varphi)$.  

If we have a known form for $\pi(\eta |\bw,\varphi)$, then this form can be directly used to obtain the estimate $\ln\widehat{m}_\eta(\bw|\varphi)$. However, in general,  $\pi(\eta |\bw,\varphi)$ is not available in closed form, and there are two obvious approaches: one, we use posterior draws and kernel density estimation to estimate $\pi(\eta^\star |\bw,\varphi)$; two, we use the large sample approximation to $\pi(\eta |\bw,\varphi)$ obtained in Theorem \ref{prop:bvm1} at the point $\eta^\star$. 

The latter approach is simple to implement  when $M_{n_2}(\eta|\varphi)$ is twice-continuously differentiable in $\eta$, for all $\varphi\in\Phi$. In this case, we obtain from \eqref{eq:marg} the estimate:
\begin{eqnarray}\label{eq:margest}
	\ln\widehat{m}_\eta(\bw|\varphi)=\ln\pi(\eta^\star|\varphi)+\nu M_{n_2}(\eta^\star|\varphi)-\ln N\{\eta^\star;\hat\eta_\varphi
	,[n_2\nu J_{n_2}(\widehat\eta_\varphi|\varphi)]^{-1}\},
\end{eqnarray}where we recall that $\widehat\eta_\varphi:=\argmax_{\eta\in\mathcal{E}}M_{n_2}(\eta|\varphi)$, and $J_{n_2}(\widehat\eta_\varphi|\varphi):={n_2}^{-1}\nabla^2_{\eta\eta}M_{n_2}(\widehat{\eta}_{\varphi}|\varphi)$. Critically, unlike $m_\eta(\bw|\varphi)$, up to the calculation of $\widehat\eta_\varphi$ and $J_{n_2}(\widehat{\eta}_\varphi|\varphi)$, the estimator in \eqref{eq:margest} is known in closed-form.

Given  $\ln\widehat{m}_\eta(\bw|\varphi)$, and fixed $\gamma\in[0,1]$, we can use 
\begin{eqnarray*}
	\widehat\pi^{\mathrm{M}}_\gamma(\varphi|\bz,\bw) & \propto\pi_\cut(\varphi|\bz)[\exp\{\ln\widehat{m}_\eta(\bw|\varphi)\}]^\gamma,
\end{eqnarray*}
as an approximation to the semi-modular posterior $\pi^{\mathrm{M}}_\gamma(\varphi|\bz,\bw)$. For fixed $\gamma$, the approximate semi-modular posterior $\widehat\pi^{\mathrm{M}}_\gamma(\varphi|\bz,\bw)$ can be sampled using a Metropolis-Hastings MCMC (MH-MCMC) algorithm. However, since $\ln\widehat{m}_\eta(\bw|\varphi)$ is only an estimate of $m_{n_2}(\bw|\varphi)$, the resulting MH-MCMC algorithm will not deliver draws from $\pi^{\mathrm{M}}_\gamma(\varphi|\bz,\bw)$. Nonetheless, in large samples, by Theorem \ref{prop:bvm1}, we can expect these draws to yield an accurate approximation to $\pi^{\mathrm{M}}_\gamma(\varphi|\bz,\bw)$. 
Similar to the method of \cite{carmona2020semi}, 
the choice of the influence parameter $\gamma$ in our
approach can be carried out using predictive approaches similar to those in \cite{carmona2020semi}, or using the conflict checks, see \cite{chakraborty2023modularized}.

It is interesting to compare our new approach to semi-modular
inference based on \eqref{smi1} with the method of \cite{carmona2020semi}, 
and we give an empirical comparison for the example of Section 4.1 in the supplementary material, finding
they give similar results.  Statistically there seems no clear reason
to prefer one approach over the other in the examples we 
have considered, and a more detailed theoretical study is left
for future work.   However, 
the computational approximations based on Theorem 1 are 
helpful for implementing both of these methods.  The method of \cite{carmona2020semi} is often implemented using a nested MCMC 
approach similar to
that used in cut posterior computation.  So repeated sampling of
$\eta$ given $\varphi$ for samples $\varphi$ from the SMI marginal
posterior for $\varphi$ can be done cheaply 
using the normal approximations justified by Theorem 1, 
or these approximations can be used as a good proposal to accelerate
MCMC or importance sampling.  
In the supplementary material, Algorithm \ref{alg:divs} describes how to sample from the proposed semi-modular posterior \eqref{smi1} when $\ln \widehat{m}_\eta(\bw|\varphi)$ is obtained using \eqref{eq:margest}.

\section{Examples}\label{sec:examples}
In this section we consider 
two examples.  The first example illustrates our 
large sample approximations for cut posterior computation, 
and for implementing diagnostics
for understanding uncertainty propagation between modules.  
We consider both probabilistic model specifications as well as a
generalized Bayesian analysis using a quasi-likelihood.
Our second example also considers
a generalized Bayesian analysis, for which the learning rate for the second
module needs to be carefully chosen.  We illustrate a situation where an
appropriate choice of the loss function can resolve conflict between 
cut and full posterior inferences, giving insight into how an initially flawed
parametric model may need to be improved.  

\subsection{HPV prevalence}\label{sec:hpv}
Our first example was discussed in \cite{plummer2015cuts},
and is based on a real epidemiological study \citep{maucort2008international}.  
The model consists of two modules.  Module 1 incorporates survey data from 13 countries on
high-risk human papillomavirus (HPV) prevalence for women in a certain age group.  
Denote by $z_i$ the number of women with high-risk HPV in country $i$ in a survey of $N_i$ individuals, $i=1,\dots, 13$, and assume that 
$z_i\sim \text{Binomial}(N_i,\varphi_i)$, where $\varphi_i\in [0,1]$ is a country-specific prevalence probability.  
The parameters $\varphi_i$ are assumed independent in their prior, with $\varphi_i\sim U[0,1]$.  Write
$\varphi=(\varphi_1,\dots, \varphi_{13})^\top$.

Module 2 incorporates cervical cancer incidence data
$\bw$, with $w_i$  the number of cervical cancer cases in 
$T_i$ woman years of follow-up in country $i$, $i=1,\dots, 13$.  
The relationship between cervical cancer incidence and HPV prevalence is described by a Poisson regression model, 
$w_i\sim \text{Poisson}(T_i \rho_i)$, where $\log \rho_i = \eta_1+\eta_2 \varphi_i$.  
For these data the Poisson regression model is misspecified, and because $\varphi_i$ is appearing as a covariate
in the Poisson regression, inference about $\varphi_i$ is influenced by the misspecification in the second module.  
Estimation of $\varphi$ adapts to the misspecification, distorting inference about these parameters, which also
results in uninterpretable inference about the regression parameters $\eta$ used to summarize the relationship
between HPV prevalence and the rate of cancer incidence.   The main interest of the analysis lies in understanding
this relationship.

\subsubsection{Cut posterior computation with large sample approximation}

When cutting feedback in this example, it is straightforward to obtain posterior samples from $\pi_{\text{cut}}(\varphi|\bz)$. This is because the likelihood for each $z_i$ is binomial, 
and the priors for the parameters $\varphi_i$ are conjugate.  In $\pi_{\text{cut}}(\varphi|\bz)$ the $\varphi_i$ are independent, with 
$\pi_{\text{cut}}(\varphi_i|\bz)$ a beta density, $\text{Beta}(z_i+1,n_i-z_i+1)$.  We generate samples $\varphi^{(s)}$, $s=1,\dots, S=1000$,
from $\pi_{\text{cut}}(\varphi|\bz)$ by direct Monte Carlo sampling.  
To generate samples $\eta^{(s)}$ so that $(\varphi^{(s)},\eta^{(s)})$ is a draw
from the joint cut posterior density, we do the following.  By Theorem \ref{prop:bvm1}, we can approximate
the conditional posterior density of $\eta$ given $\varphi^{(s)}$, $w$ and $z$ by a normal density with mean
$\mu(\varphi^{(s)})=\widehat{\eta}_{\varphi^{(s)}}$ and covariance matrix $\Sigma(\varphi^{(s)})=n^{-1}\nabla_{\eta \eta} M_n(\widehat{\eta}_{\varphi^{(s)}}|\varphi^{(s)})$. 
For each $\varphi^{(s)}$, we generate $1,000$ proposal samples for $\eta$ from a multivariate $t$-distribution with mean $\mu(\varphi^{(s)})$, scale matrix $\Sigma(\varphi^{(s)})$, and $5$ degrees of freedom, and draw a single sample $\eta^{(s)}$ 
from these proposals using sampling importance resampling (SIR).   

For comparison, we can also draw an approximate sample
$\widetilde{\eta}^{(s)}$ say from the conditional normal approximation directly.   For practical purposes the SIR samples can
be considered near-exact, and Figure \ref{exact-approximate} (top row) shows the marginal posterior samples for 
$\eta=(\eta_1,\eta_2)$ for the two approaches.  
A sample based estimate of the $1$-Wasserstein distance between the posterior marginal cut distributions 
estimated by the exact SIR and approximate conditional normal methods is $0.004$ and $0.062$ for $\eta_1$ and $\eta_2$ respectively, showing
that our large sample conditional normal approximations 
result in accurate cut posterior computation.  We can see that the marginal cut posterior density for $\eta$ is non-Gaussian, 
but this is captured very well in the approximate sampling approach where the conditional posterior density for $\eta$ is close to
normal. It is the uncertainty about $\varphi$ that is propagated in making marginal inferences about $\eta$ that results
in the non-Gaussian structure in the marginal posterior distribution for $\eta$. 
Also shown in Figure \ref{exact-approximate} are samples from the usual Bayesian posterior distribution, obtained via MCMC using the {\texttt{rstan}} package \citep{carpenter_stan_2017}.  The full and cut posterior inferences
differ substantially, demonstrating how much the misspecification of
the second module changes the inference about $\eta$ here.
The bottom row of the figure compares the univariate marginals for the cut and full posterior densities
for $\eta_1$ and $\eta_2$.
\begin{figure}[H]
	\begin{center}
		\begin{tabular}{c}
			\includegraphics[width=80mm]{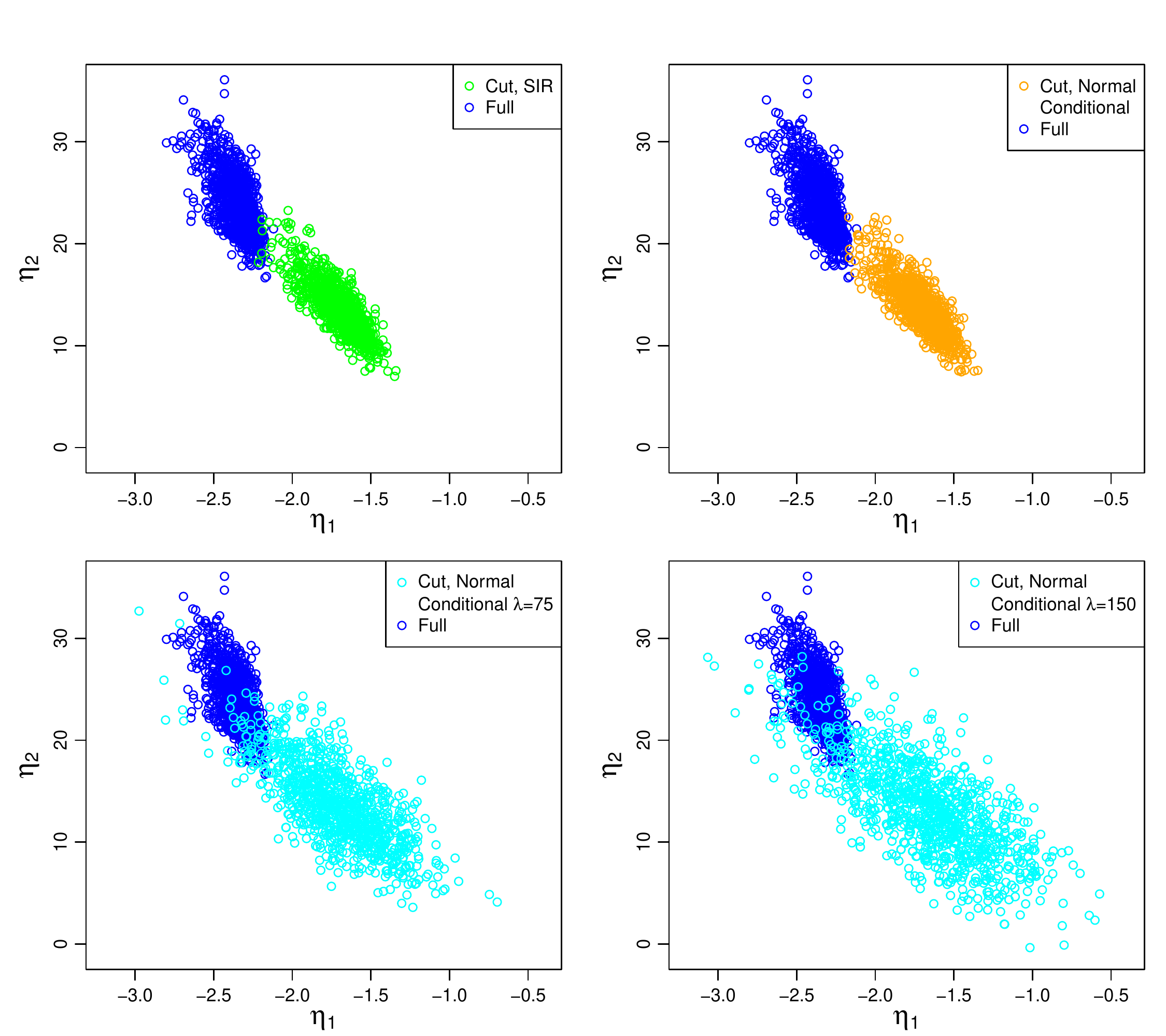}  \\
			\includegraphics[width=80mm]{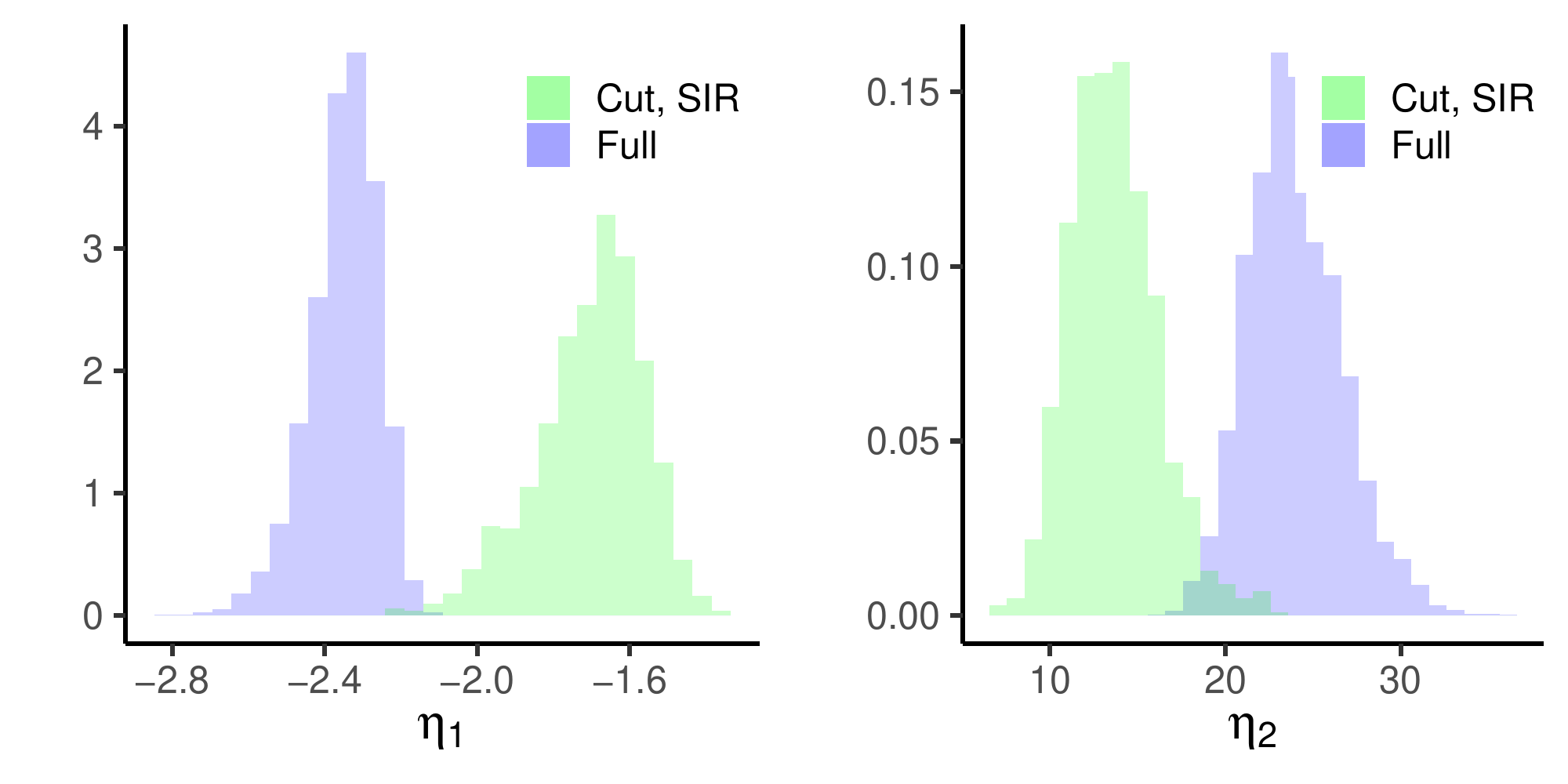}
		\end{tabular}
	\end{center}
\vspace{-.75cm}

	\caption{\label{exact-approximate}Top: marginal posterior samples for $\eta$ for full posterior density (blue) obtained by MCMC, 
		and cut posterior samples by SIR (green) and approximation by conditional normal sampling (orange). 
		Middle:  marginal posterior samples for $\eta$ for full posterior density (blue) obtained by MCMC, and cut posterior samples
		from conditional normal sampling (cyan) for quasi-likelihood loss for the second module with $\lambda=75$ (left) and
		$\lambda=150$ (right).  
		Bottom:  histogram density estimates for $\eta_1$ (left) and $\eta_2$ (right) for full posterior (blue) and cut posterior
		samples by SIR (green).}
\end{figure}

\subsubsection{Generalized posterior analysis}

The middle row of Figure \ref{exact-approximate} shows samples from the generalized cut posterior distribution 
obtained when the Poisson likelihood is replaced by a quasi-likelihood \citep{wedderburn74}, which allows for overdispersion with respect
to the Poisson model.   When using the negative 
log quasi-likelihood as the loss for the second module, 
it is sensible to choose a learning rate $\nu'=1$.  For the first module
we use the same parametric model as before.
The overdispersion parameter in the quasi-likelihood
is denoted by $\lambda$, and instead of making the Poisson
assumption that the mean and variance are equal, it is assumed that the variance is $\lambda$ times the mean
for each $w_i$.  The left plot in the middle row is for $\lambda=75$, and
the right plot is for $\lambda=150$.  We can see that even if we assume a standard deviation for the $w_i$ that 
is more than 10 times that implied by a Poisson mean-variance relationship, the full posterior samples do not become plausible
under the cut distribution.  \cite{yu2021variational} have elaborated on 
the comparison of the cut and full posterior distributions as a kind of 
conflict check, and the lack of consistency of the cut and full posterior
inferences here suggests that altering the parametric Poisson
regression to another parametric model incorporating 
multiplicative overdispersion will not result in an adequate generative
model for the data unless the degree of overdispersion is very large. 
The samples in the quasi-likelihood analysis 
were generated using the conditional normal approximation for
the density of $\eta$ given $\varphi$.  

\subsubsection{Uncertainty propagation}

Figure \ref{ellipses} shows, for 5 samples from the marginal cut posterior distribution of 
$\varphi$, a 95\% probability ellipsoid of minimal volume for
the conditional normal approximations of $p(\eta|\varphi,y)$. 
The 5 $\varphi$ samples 
are selected from $1,000$ cut posterior 
samples according to the $0.1$, $0.3$, $0.5$, $0.7$ and $0.9$ quantiles of the determinant
of the estimated conditional covariance matrix of $\eta$ given $\varphi$.
The variation in the shape of these ellipsoids is substantial as $\varphi$ changes.  
\begin{figure}[H]
	\begin{center}
		\begin{tabular}{c}
			\includegraphics[width=100mm]{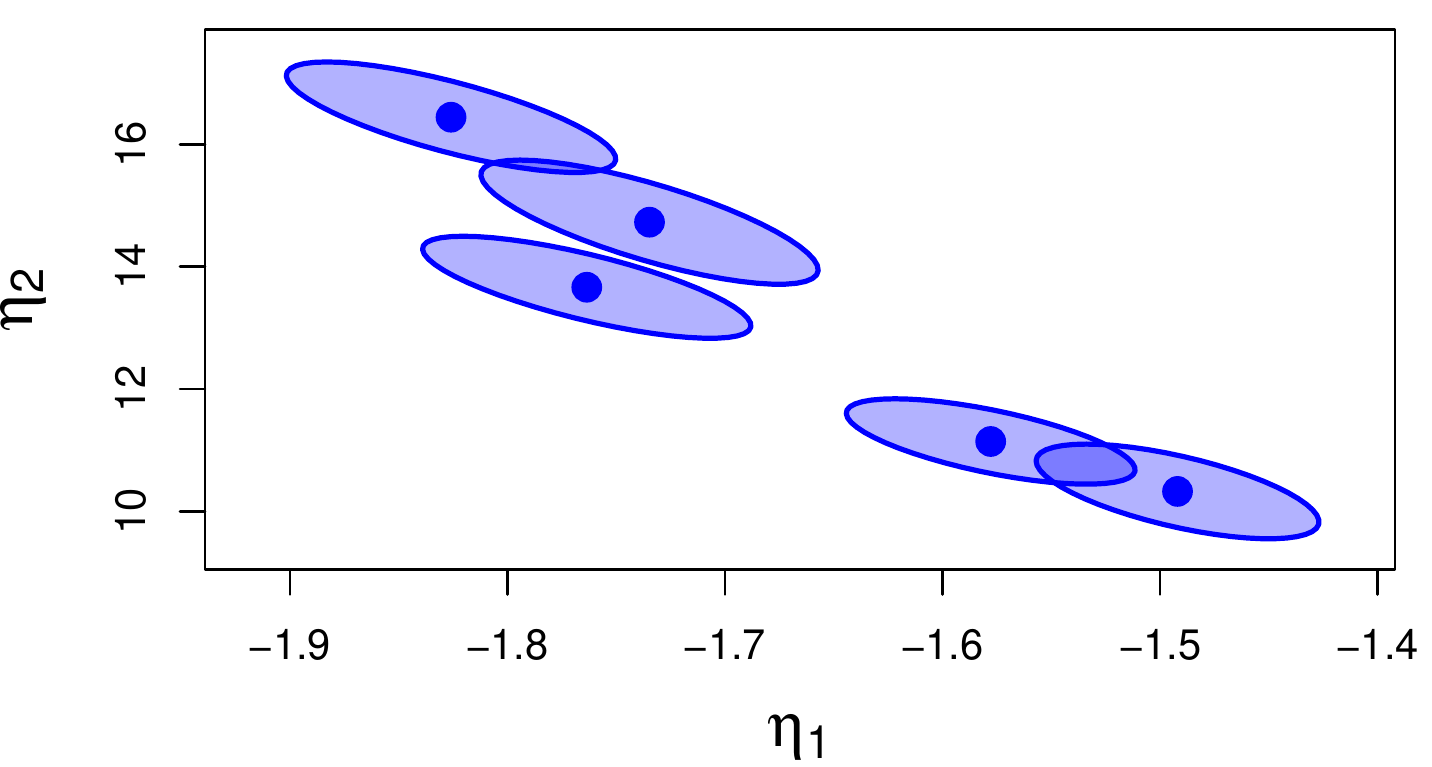}  
		\end{tabular}
	\end{center}
\vspace{-.7cm}

	\caption{\label{ellipses}95\% probability ellipsoids of minimal volume for the normal approximation to the conditional posterior
		density of $\eta$ given $\varphi$ for 5 draws from the marginal cut posterior distribution of $\varphi$. The 5 $\varphi$ samples 
		are selected from $1,000$ cut posterior 
		samples according to the $0.1$, $0.3$, $0.5$, $0.7$ and $0.9$ quantiles of the determinant
		of the estimated conditional covariance matrix of $\eta$ given $\varphi$.}
\end{figure}

We can also use the normal approximation to the conditional posterior density as a diagnostic to understand the way 
that the uncertainty in $\varphi$ propagates into the second module, for both the cut and full posterior density.
Noting that 
\begin{align}
	\text{Var}(\eta_j) & = E(\text{Var}(\eta_j|\varphi))+\text{Var}(E(\eta_j|\varphi)), \label{law-of-total-variance}
\end{align}
we could plot histograms of the values $\mu(\varphi^{(s)})_j$, $s=1,\dots, S$ and
$\Sigma(\varphi^{(s)})_{jj}$, $s=1,\dots, S$ for $j=1,2$ to understand how  uncertainty in $\varphi$ propagates
into $\eta$.  In (\ref{law-of-total-variance}) the expectations can be defined as with respect to either the full
posterior distribution or with respect to the cut posterior distribution.  
The mean of the samples in a histogram of $\Sigma(\varphi^{(s)})_{jj}$ relates to the first term on the 
right-hand side of (\ref{law-of-total-variance}).  
The variability of the samples in a 
histogram of $\mu(\varphi^{(s)})_j$ assesses 
variability propagated to $\eta_j$ from the second term on the right-hand side of  (\ref{law-of-total-variance}). 

Generalizing (\ref{law-of-total-variance}) to third central moments using the law of total cumulants 
\citep{brillinger69}, we can also write 
\begin{align}
	E((\eta_j-E(\eta_j))^3) = &  E(E((\eta_j-E(\eta_j|\varphi))^3|\varphi))+E((E(\eta_j|\varphi)-E(\eta_j))^3)+ \nonumber \\
	& \quad 3 \text{Cov}(E(\eta_j|\varphi),\text{Var}(\eta_j|\varphi)). \label{law-of-total-cumulance}
\end{align}
Once again, the expectations in the above expression can be defined as with respect to either the full
posterior distribution or with respect to the cut posterior distribution.
If the conditional posterior for $\eta_j$ given $\varphi$ is approximately symmetric, then 
the first term  on the right-hand side of (\ref{law-of-total-cumulance}) can be neglected.  Then the 
posterior skewness of $\eta_j$ depends on the second and third terms.  These terms relate to
the skewness of the conditional expectation $E(\eta_j|\varphi)$ (considered
as a function of $\varphi$) and the covariance between the conditional mean and conditional variance.  
The skewness of the conditional expectation can be assessed from looking at the skewness in a histogram
of $\mu(\varphi^{(s)})_j$, while plotting
the samples $(\mu(\varphi^{(s)})_j,\Sigma(\varphi^{(s)})_{jj})$, $s=1,\dots, S$, is helpful for 
assessing the $\text{Cov}(E(\eta|\varphi),\text{Var}(\eta|\varphi))$  term in (\ref{law-of-total-cumulance}).  

Figure \ref{diagnostics1} shows a scatterplot of $(\mu(\varphi^{(s)})_1,\Sigma(\varphi^{(s)})_{11})$, $s=1,\dots, S$, 
with histograms of each variable on the axes, for $\eta_1$.  The plot on the left is for the cut posterior density, and
the plot on the right is for the full posterior density.  There is a strong negative relationship between the conditional
posterior mean of $\varphi$ and its conditional variance, as well as negative skewness in the histogram
of $\mu(\varphi^{(s)})_{1}$, which by (\ref{law-of-total-cumulance}) explains the negative skew in the marginal distribution
for $\eta_1$ evident in Figure \ref{exact-approximate}.  This is so for both the cut and full posterior densities. 
\begin{figure}[h]
	\begin{center}
		\begin{tabular}{cc}
			\includegraphics[width=65mm]{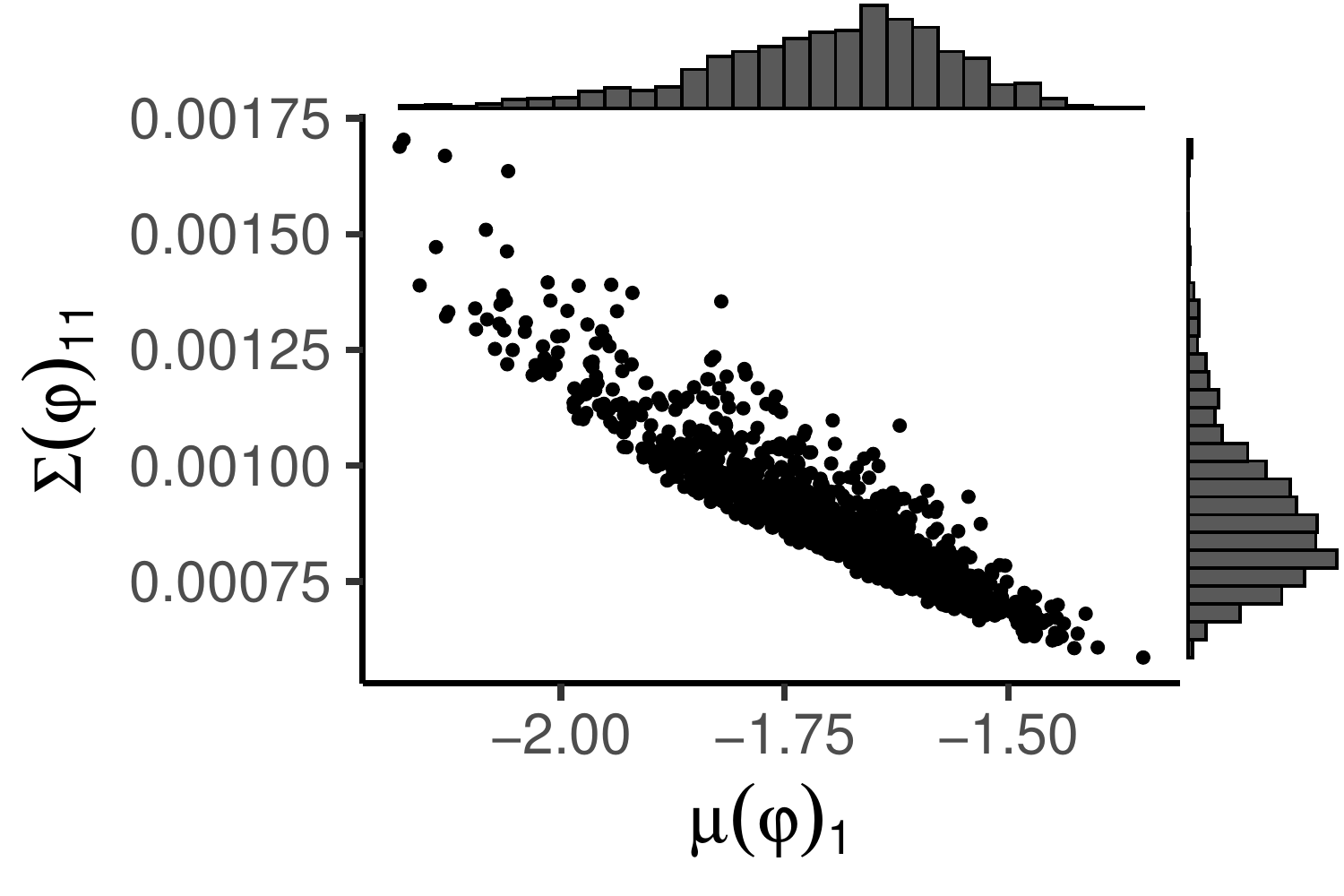} & \includegraphics[width=65mm]{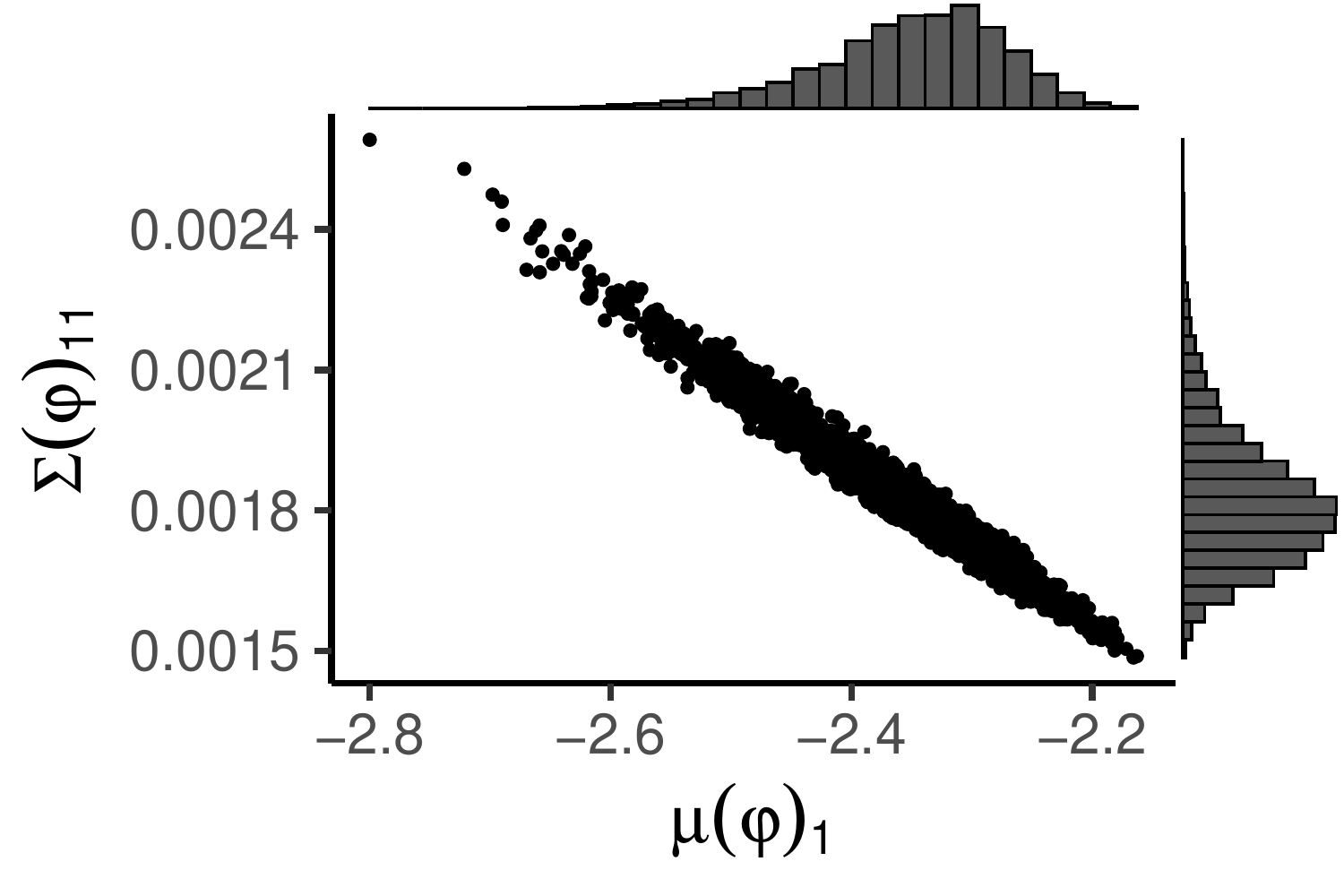}
		\end{tabular}
	\end{center}
\vspace{-.7cm}
	\caption{\label{diagnostics1}Scatterplot of $(\mu(\varphi^{(s)})_1,\Sigma(\varphi^{(s)})_{11})$, $s=1,\dots, S$, 
		for cut posterior (left) and full posterior (right) samples.  Histograms of each variable are shown on the axes.}
\end{figure}

Figure \ref{diagnostics2} shows a similar plot to Figure \ref{diagnostics1} for the parameter $\eta_2$.  
In this case, there is a strong positive relationship between the conditional posterior mean of $\varphi$ and
its conditional variance, and positive skewness in the histogram of $\mu(\varphi^{(s)})_2$, which explains
the positive skew in the marginal distribution of $\eta_2$, in both the cut and full posterior densities, as shown
in Figure \ref{exact-approximate}.
The dependence between $\mu(\varphi)_j$ and $\Sigma(\varphi)_{jj}$ in Figures \ref{diagnostics1} and
\ref{diagnostics2} relates directly to the way the conditional variance of $\eta$ depends on $\varphi$, which is
exactly what is being captured in the conditional perspective taken in the theory of Section 3.1.  
Understanding this dependence is particularly useful for explaining the the marginal  posterior shape
for $\eta$ in the full and cut posterior distributions.
\begin{figure}[h]
	\begin{center}
		\begin{tabular}{cc}
			\includegraphics[width=65mm]{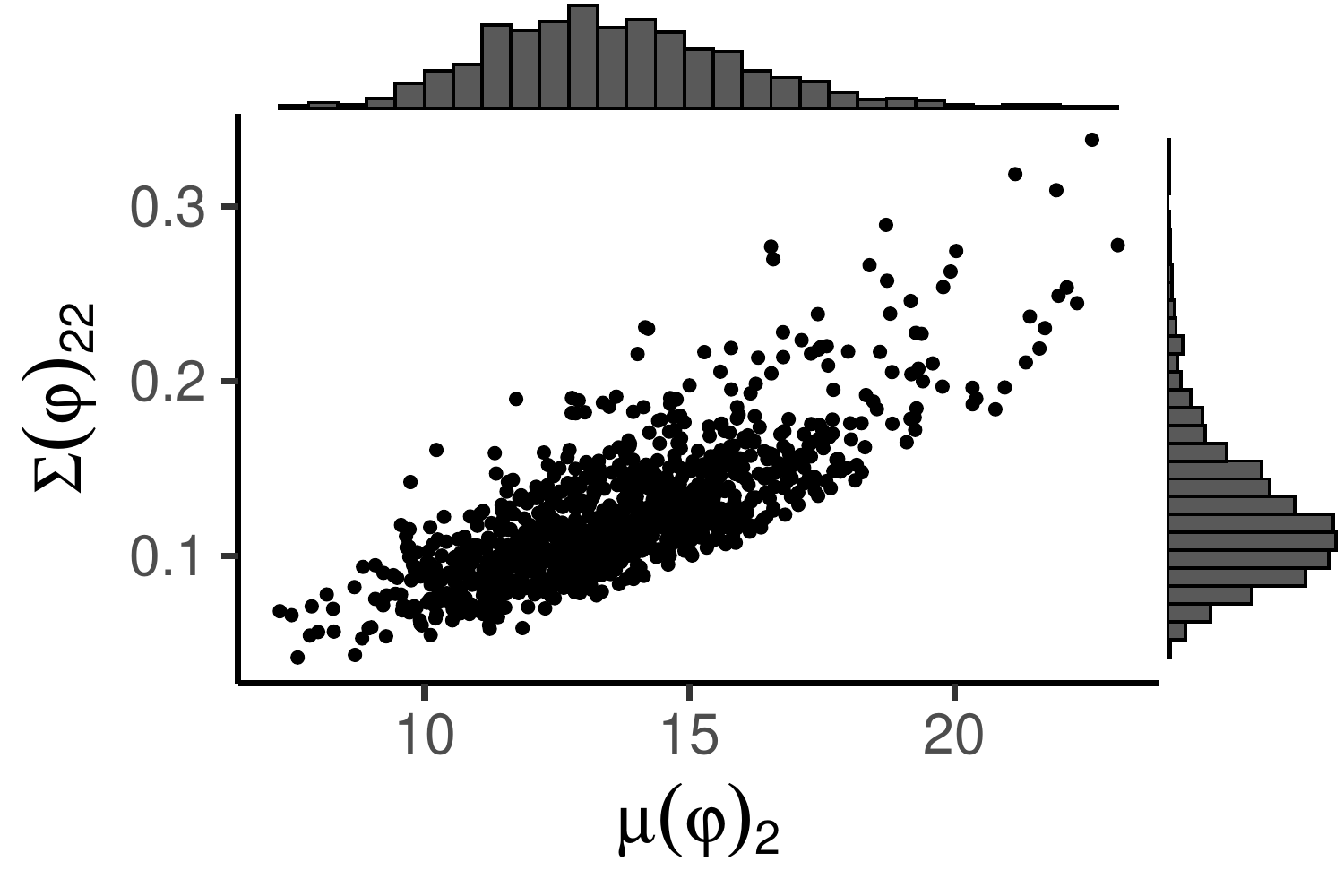} & \includegraphics[width=65mm]{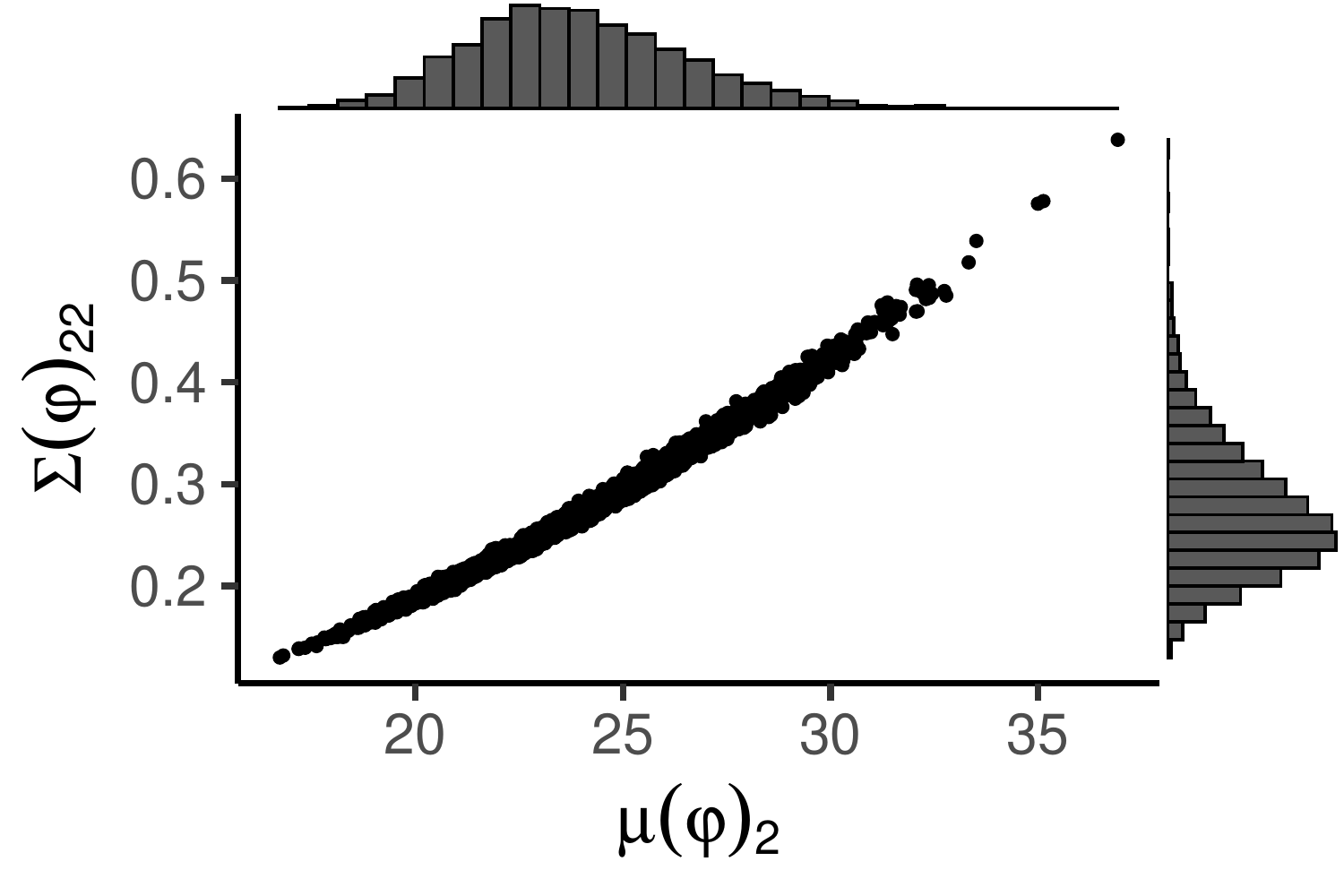}
		\end{tabular}
	\end{center}
\vspace{-.7cm}
	\caption{\label{diagnostics2}Scatterplot of $(\mu(\varphi^{(s)})_2,\Sigma(\varphi^{(s)})_{22})$, $s=1,\dots, S$, 
		for cut posterior (left) and full posterior (right) samples.  Histograms of each variable are shown on the axes.}
\end{figure}
A comparison of the marginal SMI approach of Section 3.4 with the SMI of \cite{carmona2020semi} is given in the supplementary material.  
The two methods give similar results in this example.

\subsection{A random effects model}\label{sec:re}

Our second example, discussed in \cite{liu+bb09}, considers a random
effects model.  The data are denoted by $Y_{ij}$, $i=1,\dots, N$, $j=1,\dots, J$, 
where $i$ indexes groups, and $j$ indexes observations
within groups.  The data for group $i$ is modelled as 
$$Y_{ij}|\beta_i,\varphi_i \stackrel{iid}{\sim} N(\beta_i,\varphi_i^2),\;\;\;j=1,\dots, J,$$
where $\beta_i$ is a random effect, and $\varphi_i$ is a group standard
deviation.  
The prior density for
$\beta$ is
$$\beta_i|\psi \stackrel{iid}{\sim} N(0,\psi^2),$$
$i=1,\dots, N$, where $\psi$ is the random effects standard deviation.  
\cite{liu+bb09} consider this example to demonstrate a problem that
can occur for some hierarchical models, in which there is a 
model for the random effects with thin tails, such as Gaussian.  
In the model above, if there is an outlying value for one
of the random effects, this can lead 
to poor inference for the corresponding group standard deviation, 
and overshrinkage in estimating the random effect.  The difficulty 
is most pronounced  when the number of replicates $J$ is small compared to
$N$.  \cite{liu+bb09} give an insightful discussion
that exploits the simple form of the model to do analytic calculations.  
We do not repeat their analysis here, 
but demonstrate the problem numerically
and illustrate the utility of our generalized Bayes approaches to
modular inference.

First, we will set up the model so that it takes the form of a
two module system.
Write $\beta=(\beta_1,\dots, \beta_N)^\top$ and
$\varphi=(\varphi_1,\dots, \varphi_N)^\top$.  
Let $\eta=(\beta^\top,\psi)^\top$.  
We use similar priors to \cite{liu+bb09}, although we parametrize
our model in terms of standard deviations rather than variances
and transform priors appropriately.  
Components of $\varphi$ are independent in the prior, with
marginal densities 
$\pi(\varphi_i)\propto \varphi_i^{-1}$.  For the prior on $\psi$, we use 
$\pi(\psi |\varphi_i)\propto(\bar{\varphi}^2/J+\psi^2)^{-1}\psi$, 
where $\bar{\varphi}^2=N^{-1}\sum_{i=1}^N \varphi_i^2$.  

We will reduce the full data down to sufficient statistics.  
Let $w_i=J^{-1}\sum_{j=1}^J Y_{ij}$, $z_i=\sum_{j=1}^J (Y_{ij}-z_i)^2$, 
$i=1,\dots, N$, and write $\bz=(z_1,\dots, z_n)^\top$, $\bw=(w_1,\dots, w_n)^\top$.  It is easily seen that $z$ and $\bw$ are sufficient for $\theta=(\varphi^\top,\eta^\top)^\top$, with
$\bz$ and $\bw$ being independent of each other.  The density of $\bz|\varphi$, 
written $p(\bz|\varphi)$, depends only on $\varphi$, with
$$z_i|\varphi_i\sim \text{Gamma}\left(\frac{J-1}{2},\frac{1}{2\varphi_i^2}\right),$$
independently for $i=1,\dots, N$.  
Similarly, write $p(\bw|\varphi,\eta)$ for the density of $w$, and  
$$w_i|\beta_i,\varphi_i\sim N\left(\beta_i,\frac{\varphi_i^2}{J}\right),$$
independently, for $i=1,\dots, N$.  
The model for the sufficient statistics is a two-module system.  The first module consists of $p(\bz|\varphi)$ and $p(\varphi)$, and
the second module comprises $p(\bw|\varphi,\eta)$ and $p(\eta|\varphi)$.  

We simulate a dataset from the model, with $N=100$, $J=10$, $\psi=1$ and
$\varphi_i=0.5$, $i=1,\dots, N$.   The random effects vector $\beta$ is 
simulated from its prior, 
except for $\beta_1$, which is fixed at $10$.  Since $\beta_1$ is inconsistent
with the hierarchical prior, this leads to poor estimation of $\varphi_1$ 
when $J$ is small compared to $N$, and poor estimation of $\beta_1$.  
Figure \ref{RE-example} (left)
compares the posterior distributions of $\varphi_1$ from the conventional
parametric and the cut posterior distributions.   The boxplots 
are for 1,000 posterior samples in each case.  The horizontal line
shows the true value.  The accuracy of the conventional posterior
is poor, and inconsistent with the cut posterior inferences which
are more accurate.

\begin{figure}[h]
	\begin{center}
		\begin{tabular}{c}
			\includegraphics[width=100mm]{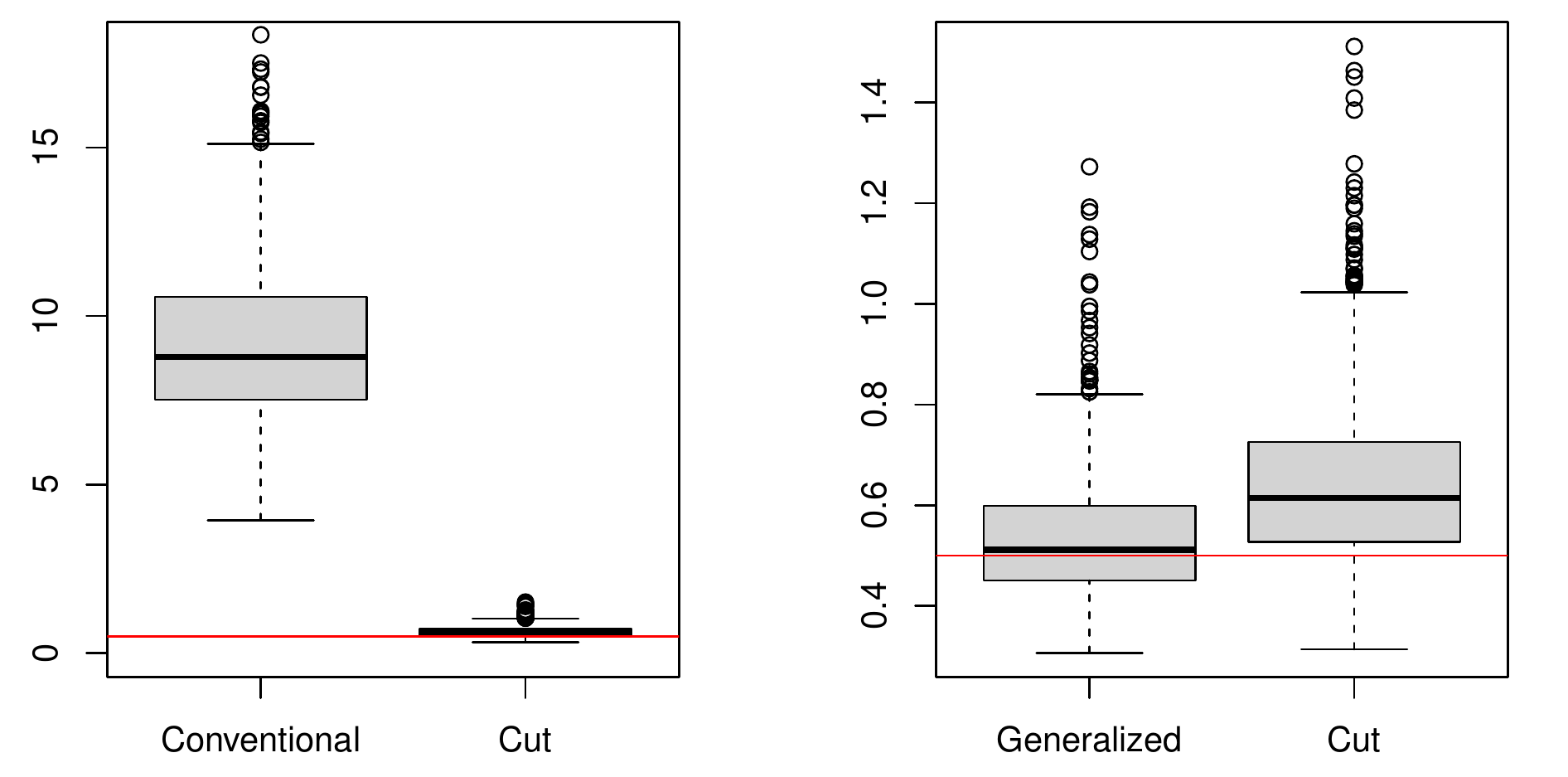} 
		\end{tabular}
	\end{center}\vspace{-.7cm}

	\caption{\label{RE-example}Left: comparison of posterior samples
		for $\varphi_1$ for conventional posterior and cut posterior for
		parametric model specification.  Right:  comparison of posterior 
		samples for $\varphi_1$ for generalized Bayes analysis with
		Tukey's loss for the second module, $\kappa=5$ and $\nu'=3.3$
		with cut posterior.  All boxplots summarize 1,000 posterior samples, and
		the horizontal line is the true value $\varphi_1=0.5$}
\end{figure}

\subsubsection{Generalized posterior analysis}

It is interesting in this example to replace the normal model for $w_i$ 
in module 2 with a loss likelihood, to see whether this resolves the inconsistency
between the cut and full generalized posterior inferences.
Here we consider Tukey's loss \citep{beaton+t74}, which was recently used for a generalized Bayesian
analysis by \cite{jewson+r22}.   
As pointed out
by \cite{jewson+r22}, Tukey's loss can be useful when an analyst 
knows the distribution of the data has heavy tails, but a precise knowledge
of the tail behaviour is difficult to formalize.  
Writing $w_i'=w_i'(\varphi_i,\beta_i)=(w_i-\beta_i)/(\phi_i/\sqrt{J})$, 
in our generalized Bayesian analysis we replace the 
Gaussian negative log-likelihood terms
$$-\log p(w_i|\varphi_i,\beta_i)=\frac{1}{2}\log \frac{2\pi\varphi_i^2}{J}-\frac{1}{2}w_i'^2,$$
with Tukey's loss terms
\begin{align*}
	m(w_i;\eta,\varphi)=\left\{\begin{array}{ll}
		\frac{1}{2}\log \frac{2\pi\varphi_i^2}{J}+\frac{w_i'^2}{2}-\frac{w_i'^4}{2\kappa^2}-\frac{w_i'^6}{6\kappa^4} & \mbox{if $|w_i'|\leq \kappa$} \\
		\frac{1}{2}\log \frac{2\pi\varphi_i^2}{J}+\frac{\kappa^2}{6},
	\end{array}\right.
\end{align*}
for $i=1,\dots, N$, where $\kappa$ is a tuning parameter  
controlling the degree of
robustness to departures from normality.  As $\kappa\rightarrow\infty$, 
Tukey's loss approaches the Gaussian negative log-likelihood, 
whereas small values for $\kappa$ result in greater robustness to outliers.
There are a variety of ways to choose $\kappa$, but here we fix
$\kappa=5$.  \cite{jewson+r22} describe a way of choosing
$\kappa$ and other loss parameters using a so-called ${\cal H}$-posterior
based on the Hyv\"{a}rinen score, and also consider model choice
for loss functions, but these directions are not pursued here.  
For Tukey's loss, the corresponding loss likelihood is not integrable in
$w$, so it does not correspond to any probabilistic model.  

Our generalized Bayesian analysis requires a choice of the learning
rates $\nu$ and $\nu'$ as discussed in Section 3.2. 
Recall that $\nu$ calibrates the module 1 loss to the prior, and
$\nu'$ can be thought of as calibrating the module 2 loss to the conditional
prior for $\eta|\varphi$.  Since we use the original probabilistic specification for module 1, we choose the learning rate $\nu$ to be $1$, and 
the generalized Bayes and conventional cut posterior densities 
for $\varphi$ are the same.  To choose $\nu'$, we use the method
discussed in Section 3.2.  However, noting that only the parameters
$\beta$ appear in the loss function and not the prior hyperparameter 
$\psi$, we calibrate $\nu'$ by considering matching the Fisher information
number for updates for $\beta$ asymptotically with $\psi$ fixed, for 
loss likelihood bootstrap and generalized Bayes.  
Since the matching is done asymptotically, the choice of $\psi$ 
makes no difference to the value of $\nu'$ obtained.  To estimate the matrix $\Psi_{22}$ in estimating $\nu'$ in Section 3.2, we used a Bayesian bootstrap
applied to the original data groups, since it is not possible otherwise
to estimate $\Psi_{22}$ from the data sufficient statistics.  This is because
there is no replication that can be used, 
with $\beta_i$ appearing only in the model for $w_i$.  
The learning rate obtained for the second module for the analysis was
$\nu'=3.3$.   

Figure \ref{RE-example} (right)
compares the posterior distributions of $\varphi_1$ for the generalized
Bayes posterior and the cut posterior distributions.   Once again, the boxplots 
are for 1,000 posterior samples, and the horizontal line shows
the true value.  The cut posterior is the same as for the conventional
posterior for the parametric model, as we are still using the negative log-likelihood as the loss for module 1.  We see that now the cut and full
posterior inferences are consistent with each other, so that 
the Tukey's loss, which accommodates heavy-tailed data, resolves
the conflict between different parts of the model.  
Although we have reduced the full data
to sufficient statistics for inference, the non-sufficient information in the
replicates is useful for model checking - using the replication we
may distinguish between model failure due to outliers in the sampling
density and model failure due to an inappropriate prior on the random
effects.  An outlying random effect for a group will influence all replicates 
in the group.

{ For computations in this example, we used the {\texttt{rstan}} package \citep{carpenter_stan_2017} for both the conventional and generalized
posterior densities.    We ran four chains with 1000 iterations burn-in and
4000 sampling iterations, thinning the output so that 1000 samples are
retained.  The cut posterior density for $\varphi_1^2$ is inverse gamma, and
was sampled directly to get $1,000$ cut posterior samples for  $\varphi_1$.}  

\section{Discussion}

This paper combines generalized posterior inference with cutting feedback
methods for flexible Bayesian modular inference.  
Starting out with a parametric model specification, we can replace 
the negative log likelihood for unreliable modules with different 
choices of a loss function
to resolve any incompatibility between cut and full posterior inferences.  We have also studied the large sample behaviour of 
the generalized cut posterior distribution, taking a conditional 
perspective.  Our main result describes the asymptotic behaviour of
the conditional
posterior distribution of a module's parameters given parameters in other
modules, formally justifying conditional Laplace approximations.  
These provide 
more accurate approximations of conditional posterior distributions than
those obtained from Laplace approximations
of the joint posterior density.  
Our large sample approximations are useful 
for computing diagnostics describing uncertainty
propagation between modules, as well as for the efficient implementation
of a new approach to semi-modular inference.  

In the framework for modular inference that we have
developed, the loss function is a sum of loss functions
associated with different modules.  We considered calibrating
the different component loss functions in one example, but more research
is needed on the best way to do this for different purposes.  With a single loss function, there are different methods of calibrating
the loss to the prior, and the best method to use may depend on the goals
of the analysis.   A similar remark applies in generalized Bayesian modular
inference.  An anonymous referee has also asked about the connections with
the ``restricted likelihood" approach to dealing with misspecification, 
discussed recently in \cite{lewis2021bayesian}.  Restricted likelihood reduces
the data to an insufficient summary statistic, to discard information that
cannot be matched under the assumed model.  The method can be implemented computationally using
likelihood-free inference algorithms, 
and modular inference has been
considered in this context by \cite{chakraborty2023modularized}.  

\bibliographystyle{ba}
\bibliography{mod_bib}

\begin{thebibliography}{35}
\newcommand{\enquote}[1]{``#1''}
\expandafter\ifx\csname natexlab\endcsname\relax\def\natexlab#1{#1}\fi
\expandafter\ifx\csname url\endcsname\relax
  \def\url#1{{\tt #1}}\fi
\expandafter\ifx\csname urlprefix\endcsname\relax\def\urlprefix{URL }\fi
\ifx\endbibitem\undefined \let\endbibitem\relax\fi

\bibitem[{Alquier(2021)}]{alquier21}
Alquier, P. (2021).
\newblock \enquote{User-friendly introduction to {PAC-Bayes} bounds.}
\newblock arXiv:2110.11216.
\endbibitem

\bibitem[{Beaton and Tukey(1974)}]{beaton+t74}
Beaton, A.~E. and Tukey, J.~W. (1974).
\newblock \enquote{The fitting of power series, meaning polynomials,
  illustrated on band-spectroscopic data.}
\newblock 16(2): 147--185.
\endbibitem

\bibitem[{Besag(1989)}]{besag1989candidate}
Besag, J. (1989).
\newblock \enquote{A candidate's formula: A curious result in {B}ayesian
  prediction.}
\newblock {\em Biometrika\/}, 76(1): 183--183.
\endbibitem

\bibitem[{Bissiri et~al.(2016)Bissiri, Holmes, and Walker}]{bissiri2016general}
Bissiri, P.~G., Holmes, C.~C., and Walker, S.~G. (2016).
\newblock \enquote{A general framework for updating belief distributions.}
\newblock {\em Journal of the Royal Statistical Society. Series B, Statistical
  methodology\/}, 78(5): 1103.
\endbibitem

\bibitem[{Brillinger(1969)}]{brillinger69}
Brillinger, D.~R. (1969).
\newblock \enquote{The calculation of cumulants via conditioning.}
\newblock {\em Annals of the Institute of Statistical Mathematics\/}, 21(1):
  215--218.
\endbibitem

\bibitem[{Carmona and Nicholls(2020)}]{carmona2020semi}
Carmona, C. and Nicholls, G. (2020).
\newblock \enquote{Semi-Modular Inference: enhanced learning in multi-modular
  models by tempering the influence of components.}
\newblock In {\em International Conference on Artificial Intelligence and
  Statistics\/}, 4226--4235. PMLR.
\endbibitem

\bibitem[{Carmona and Nicholls(2022)}]{carmona+n22}
--- (2022).
\newblock \enquote{Scalable semi-modular inference with variational
  meta-posteriors.}
\newblock {\em arXiv preprint arXiv:2204.00296\/}.
\endbibitem

\bibitem[{Carpenter et~al.(2017)Carpenter, Gelman, Hoffman, Lee, Goodrich,
  Betancourt, Brubaker, Guo, Li, and Riddell}]{carpenter_stan_2017}
Carpenter, B., Gelman, A., Hoffman, M.~D., Lee, D., Goodrich, B., Betancourt,
  M., Brubaker, M., Guo, J., Li, P., and Riddell, A. (2017).
\newblock \enquote{Stan: {A} {Probabilistic} {Programming} {Language}.}
\newblock {\em Journal of Statistical Software\/}, 76(1): 1--32.
\newblock Number: 1.
\newline\urlprefix\url{https://www.jstatsoft.org/index.php/jss/article/view/v076i01}
\endbibitem

\bibitem[{Chakraborty et~al.(2023)Chakraborty, Nott, Drovandi, Frazier, and
  Sisson}]{chakraborty2023modularized}
Chakraborty, A., Nott, D.~J., Drovandi, C.~C., Frazier, D.~T., and Sisson,
  S.~A. (2023).
\newblock \enquote{Modularized {B}ayesian analyses and cutting feedback in
  likelihood-free inference.}
\newblock {\em Statistics and Computing\/}, 33(1): 33.
\endbibitem

\bibitem[{Chernozhukov and Hong(2003)}]{chernozhukov2003mcmc}
Chernozhukov, V. and Hong, H. (2003).
\newblock \enquote{An {MCMC} approach to classical estimation.}
\newblock {\em Journal of Econometrics\/}, 115(2): 293--346.
\endbibitem

\bibitem[{Chib(1995)}]{chib1995marginal}
Chib, S. (1995).
\newblock \enquote{Marginal likelihood from the {G}ibbs output.}
\newblock {\em Journal of the {A}merical {S}tatistical {A}ssociation\/},
  90(432): 1313--1321.
\endbibitem

\bibitem[{Gr{\"u}nwald(2012)}]{grunwald2012safe}
Gr{\"u}nwald, P. (2012).
\newblock \enquote{The safe {B}ayesian: learning the learning rate via the
  mixability gap.}
\newblock In {\em Algorithmic Learning Theory: 23rd International Conference,
  ALT 2012, Lyon, France, October 29-31, 2012. Proceedings 23\/}, 169--183.
  Springer.
\endbibitem

\bibitem[{Gr{\"u}nwald and Van~Ommen(2017)}]{grunwald2017inconsistency}
Gr{\"u}nwald, P. and Van~Ommen, T. (2017).
\newblock \enquote{Inconsistency of {B}ayesian inference for misspecified
  linear models, and a proposal for repairing it.}
\newblock {\em Bayesian Analysis\/}, 12(4): 1069--1103.
\endbibitem

\bibitem[{Holmes and Walker(2017)}]{holmes+w17}
Holmes, C.~C. and Walker, S.~G. (2017).
\newblock \enquote{{Assigning a value to a power likelihood in a general
  Bayesian model}.}
\newblock {\em Biometrika\/}, 104(2): 497--503.
\endbibitem

\bibitem[{Jacob et~al.(2017)Jacob, Murray, Holmes, and
  Robert}]{jacob2017better}
Jacob, P.~E., Murray, L.~M., Holmes, C.~C., and Robert, C.~P. (2017).
\newblock \enquote{Better together? Statistical learning in models made of
  modules.}
\newblock {\em arXiv preprint arXiv:1708.08719\/}.
\endbibitem

\bibitem[{Jacob et~al.(2020)Jacob, O'Leary, and Atchad{\'e}}]{jacob+oa17}
Jacob, P.~E., O'Leary, J., and Atchad{\'e}, Y.~F. (2020).
\newblock \enquote{Unbiased {M}arkov chain {M}onte {C}arlo methods with
  couplings (with discussion).}
\newblock {\em Journal of the Royal Statistical Society: Series B (Statistical
  Methodology)\/}, 82(3): 543--600.
\endbibitem

\bibitem[{Jewson and Rossell(2022)}]{jewson+r22}
Jewson, J. and Rossell, D. (2022).
\newblock \enquote{General {B}ayesian Loss Function Selection and the use of
  Improper Models.}
\newblock {\em Journal of the Royal Statistical Society Series B\/}, 84(5):
  1640--1665.
\endbibitem

\bibitem[{Kleijn and van~der Vaart(2012)}]{kleijn2012bernstein}
Kleijn, B.~J. and van~der Vaart, A.~W. (2012).
\newblock \enquote{The {B}ernstein-von-{M}ises theorem under misspecification.}
\newblock {\em Electronic Journal of Statistics\/}, 6: 354--381.
\endbibitem

\bibitem[{Lehmann and Casella(2006)}]{lehmann2006theory}
Lehmann, E.~L. and Casella, G. (2006).
\newblock {\em Theory of point estimation\/}.
\newblock Springer Science \& Business Media.
\endbibitem

\bibitem[{Lewis et~al.(2021)Lewis, MacEachern, and Lee}]{lewis2021bayesian}
Lewis, J.~R., MacEachern, S.~N., and Lee, Y. (2021).
\newblock \enquote{Bayesian Restricted Likelihood Methods: Conditioning on
  Insufficient Statistics in Bayesian Regression.}
\newblock {\em Bayesian Analysis\/}, 1(1): 1--38.
\endbibitem

\bibitem[{Liu et~al.(2009)Liu, Bayarri, and Berger}]{liu+bb09}
Liu, F., Bayarri, M.~J., and Berger, J.~O. (2009).
\newblock \enquote{Modularization in {B}ayesian analysis, with emphasis on
  analysis of computer models.}
\newblock {\em Bayesian Analysis\/}, 4(1): 119--150.
\endbibitem

\bibitem[{Liu and Goudie(2022{\natexlab{a}})}]{liu+g22}
Liu, Y. and Goudie, R. J.~B. (2022{\natexlab{a}}).
\newblock \enquote{A General Framework for Cutting Feedback within Modularized
  {B}ayesian Inference.}
\newblock {\em arXiv preprint arXiv:2211.03274\/}.
\endbibitem

\bibitem[{Liu and Goudie(2022{\natexlab{b}})}]{liu+g20}
--- (2022{\natexlab{b}}).
\newblock \enquote{Stochastic Approximation Cut Algorithm for Inference in
  Modularized {B}ayesian Models.}
\newblock {\em Statistics and Computing\/}, 32(7): 1--15.
\endbibitem

\bibitem[{Lunn et~al.(2009)Lunn, Best, Spiegelhalter, Graham, and
  Neuenschwander}]{lunn+bsgn09}
Lunn, D., Best, N., Spiegelhalter, D., Graham, G., and Neuenschwander, B.
  (2009).
\newblock \enquote{Combining {MCMC} with `sequential' {PKPD} modelling.}
\newblock {\em Journal of Pharmacokinetics and Pharmacodynamics\/}, 36: 19--38.
\endbibitem

\bibitem[{Lyddon et~al.(2019)Lyddon, Holmes, and Walker}]{lyddon+hw19}
Lyddon, S.~P., Holmes, C.~C., and Walker, S.~G. (2019).
\newblock \enquote{{General Bayesian updating and the loss-likelihood
  bootstrap}.}
\newblock {\em Biometrika\/}, 106(2): 465--478.
\endbibitem

\bibitem[{Maucort-Boulch et~al.(2008)Maucort-Boulch, Franceschi, and
  Plummer}]{maucort2008international}
Maucort-Boulch, D., Franceschi, S., and Plummer, M. (2008).
\newblock \enquote{International correlation between human papillomavirus
  prevalence and cervical cancer incidence.}
\newblock {\em Cancer Epidemiology and Prevention Biomarkers\/}, 17(3):
  717--720.
\endbibitem

\bibitem[{Miller(2021)}]{miller2021asymptotic}
Miller, J.~W. (2021).
\newblock \enquote{Asymptotic normality, concentration, and coverage of
  generalized posteriors.}
\newblock {\em Journal of Machine Learning Research\/}, 22(168): 1--53.
\endbibitem

\bibitem[{Nicholls et~al.(2022)Nicholls, Lee, Wu, and
  Carmona}]{nicholls2022valid}
Nicholls, G.~K., Lee, J.~E., Wu, C.-H., and Carmona, C.~U. (2022).
\newblock \enquote{Valid belief updates for prequentially additive loss
  functions arising in Semi-Modular Inference.}
\newblock {\em arXiv preprint arXiv:2201.09706\/}.
\endbibitem

\bibitem[{Plummer(2015)}]{plummer2015cuts}
Plummer, M. (2015).
\newblock \enquote{Cuts in {B}ayesian graphical models.}
\newblock {\em Statistics and Computing\/}, 25(1): 37--43.
\endbibitem

\bibitem[{Pompe and Jacob(2021)}]{pompe2021asymptotics}
Pompe, E. and Jacob, P.~E. (2021).
\newblock \enquote{Asymptotics of cut distributions and robust modular
  inference using Posterior Bootstrap.}
\newblock {\em arXiv preprint arXiv:2110.11149\/}.
\endbibitem

\bibitem[{Portier(2016)}]{Portier2016}
Portier, F. (2016).
\newblock \enquote{{On the asymptotics of $Z$-estimators indexed by the
  objective functions}.}
\newblock {\em Electronic Journal of Statistics\/}, 10(1): 464 -- 494.
\newline\urlprefix\url{https://doi.org/10.1214/15-EJS1097}
\endbibitem

\bibitem[{Syring and Martin(2018)}]{syring+m18}
Syring, N. and Martin, R. (2018).
\newblock \enquote{{Calibrating general posterior credible regions}.}
\newblock {\em Biometrika\/}, 106(2): 479--486.
\endbibitem

\bibitem[{Wedderburn(1974)}]{wedderburn74}
Wedderburn, R. W.~M. (1974).
\newblock \enquote{{Quasi-likelihood functions, generalized linear models, and
  the Gauss---Newton method}.}
\newblock {\em Biometrika\/}, 61(3): 439--447.
\endbibitem

\bibitem[{Wu and Martin(2020)}]{wu2020comparison}
Wu, P.-S. and Martin, R. (2020).
\newblock \enquote{A comparison of learning rate selection methods in
  generalized {B}ayesian inference.}
\newblock {\em arXiv preprint arXiv:2012.11349\/}.
\endbibitem

\bibitem[{Yu et~al.(2023)Yu, Nott, and Smith}]{yu2021variational}
Yu, X., Nott, D.~J., and Smith, M.~S. (2023).
\newblock \enquote{Variational inference for cutting feedback in misspecified
  models.}
\newblock {\em Statistical Science\/}, (To appear).
\endbibitem

\end{thebibliography}
 
\begin{acks}[Acknowledgments]
David Nott is affiliated with the Institute of Operations Research and Analytics at the National University of Singapore.  David Frazier was supported by the Australian Research Council's Discovery Early Career Researcher Award funding scheme (DE200101070). We thank the editorial team for their help in greatly improving the manuscript. 
\end{acks}

\appendix
\section{Proofs of Main Results}

\begin{proof}[Proof of Lemma 1]
	We prove the result by verifying the sufficient conditions in Theorem 1 of \cite{chernozhukov2003mcmc} for the criteria $\nu L_n(\varphi)$. Assumption 1 satisfies the sufficient conditions in Lemmas 1 and 2 in \cite{chernozhukov2003mcmc}, which together with the prior condition in Assumption 1, verifies the sufficient conditions in Theorem 1 of \cite{chernozhukov2003mcmc}. The result follows. 
\end{proof}

\begin{remark}
	The proof of Theorem \ref{prop:bvm1} follows by generalising the arguments in \cite{chernozhukov2003mcmc}. This is a novel generalization for at least two reasons. Firstly, the separability of the criterion functions allow us to maintain different conditions for each portion, e.g., different levels of smoothness, without requiring conditions on the joint criterion, $Q_n(\theta)$. Secondly, by focusing on $\varphi\in\Phi_\delta$, the resulting posterior concentration is not directly impacted by the rate at which the posterior for $\pi_\cut(\varphi|\bz)$ concentrates onto $\varphi^\star$. That is, the result of Theorem \ref{prop:bvm1} remains valid when the posterior for $\varphi$ concentrates at rates slower than the parametric $\sqrt{n}$-rate, so long as Assumption \ref{ass:expand} remains valid. 
\end{remark}

\begin{proof}[Proof of Theorem 1]
	To simplify the proof of the result, let us abuse notation and write $n=n_2$. Define $J(\varphi):=J(\eta^\star_\varphi|\varphi)$, and
	$$
	t=\sqrt{n}(\eta-\eta^\star_\varphi)-Z_n(\varphi)/\sqrt{n},\text{ where }Z_n(\varphi):=J(\varphi)^{-1}\Delta_{n}(\eta^\star_\varphi|\varphi).
	$$ From the quadratic approximation in Assumption \ref{ass:expand}, and the above definitions, we have the identity
	\begin{flalign}\label{eq:rewrite}
		M_n(\eta|\varphi)-M_n(\eta^\star_\varphi|\varphi)=-\frac{1}{2}t^\top J(\varphi)t+\frac{1}{2}\frac{1}{\sqrt{n}}Z_{n}(\varphi)^{\top}J^{}(\varphi)\frac{1}{\sqrt{n}}Z_{n}(\varphi)+R_n(\eta,\varphi),
	\end{flalign}for some remainder term $R_n(\eta,\varphi)$. 
	Now, define $T_n(\varphi):=Z_n(\varphi)/n+\eta^\star_\varphi$, and let 
	\begin{equation*}
		\omega(t):=M_n\{T_n(\varphi)+t/\sqrt{n}|\varphi\}-M_n(\eta^\star_\varphi|\varphi)-\frac{1}{2}\frac{1}{\sqrt{n}}Z_{n}(\varphi)^{\top}J^{}(\varphi)\frac{1}{\sqrt{n}}Z_{n}(\varphi),
	\end{equation*}
	which, by \eqref{eq:rewrite}, is equal to
	\begin{equation}\label{eq:omega}
		\omega(t_{})=-\frac{1}{2}t^\top J(\varphi)t+R_n(T_n(\varphi)+t/\sqrt{n},\varphi).
	\end{equation}Using \eqref{eq:omega}, the posterior can be stated as 
	\begin{flalign*}
		\pi(t|\bw,\varphi)&:=\frac{\pi\{t/\sqrt{n}+T_{n}(\varphi)|\varphi\}\exp[\nu\{M_n\{T_n(\varphi)+t/{\sqrt{n}}|\varphi\}-M_n\{\eta^\star_\varphi|\varphi\}\}]}{\int_{\mathcal{E}_n} {\pi\{t/\sqrt{n}+T_{n}(\varphi)|\varphi\}\exp[\nu\{M_n\{T_n(\varphi)+{t}/{\sqrt{n}}|\varphi\}-M_n\{\eta^\star_\varphi|\varphi\}\}]\dt t}}
		\\&={\pi\{t/\sqrt{n}+T_{n}(\varphi)|\varphi\}\exp\{\nu\omega(t)\}}/C_n,
	\end{flalign*}for 
	$$
	C_n:={\int_{\mathcal{E}_n} {\pi\{t/\sqrt{n}+T_{n}(\varphi)|\varphi\}\exp\{\nu\omega(t)\}\dt t}}.
	$$
	
	The stated result follows if
	\begin{flalign*}
		\int_{\mathcal{E}_n}  \|t\|\left|\pi(t|\bw,\varphi)-N\{t;0,[\nu J(\varphi)]^{-1}\}\right|\dt t&=C_n^{-1}J_n=o_p(1), 
	\end{flalign*}
	where
	\begin{flalign*}
		J_{n}&=\int_{\mathcal{E}_n} \|t\|^{}\bigg{|}\exp\left\{\nu\omega(t)\right\} \pi_{}\left\{T_n(\varphi)+{t_{}}/{\sqrt{n}}|\varphi\right\}-C_nN\{t;0,[\nu J(\varphi)]^{-1}\}\bigg{|} \dt t.
	\end{flalign*}
	However, $J_{n}\leq J_{1n}+J_{2n},$ where
	\begin{flalign*}
		J_{1n}&:= \int_{\mathcal{E}_n} \|t\|\bigg|\exp \left\{ -\frac{1}{2}t^\top [\nu J(\varphi)]t+\nu R_n(T_n(\varphi)+t/\sqrt{n},\varphi)\right\} \pi_{}\left\{T_n(\varphi)+{t}/{\sqrt{n}}|\varphi\right\}\\&\quad-\pi(\eta^\star_\varphi|\varphi)\exp \left\{-\frac{1}{2} t^{\top} [\nu J(\varphi)]^{}t\right\} \bigg| \dt t,\\
		J_{2n}&:=\left|C_{n}\frac{\left|\nu J(\varphi)\right|^{1/2 }}{(2\pi)^{{d_\eta/2}}}-\pi(\eta_\varphi^\star|\varphi)\right|\int_{\mathcal{E}_n}  \|t\|^{}\exp \left\{-\frac{1}{2} t^{\top} [\nu J^{}(\varphi)] t\right\} \dt t .
	\end{flalign*}and where we have used equation \eqref{eq:omega}  in the definition of $J_{1n}$. 
	Further, if $J_{1n}=o_{p}(1)$, then 
	\begin{flalign*}
		C_n&=\pi(\eta^\star_\varphi|\varphi)\int_{\mathbb{R}^{d_\eta}}\exp\left\{-\frac{1}{2}t^{\top}[\nu J(\varphi)] t\right\}\dt t+o_p(1)=\pi(\eta^\star_\varphi|\varphi)\frac{(2\pi)^{d_\eta/2}}{|\nu J(\varphi)|^{1/2}}+o_p(1),
	\end{flalign*}and $J_{2n}=o_p(1)$ since for each $\varphi\in\Phi_\delta$ the matrix $J(\varphi)$ is positive-definite, by Assumption 4(ii), so that $\int_{\mathbb{R}^{d_\eta}} \|t\|^{}\exp \left\{-\frac{1}{2} t^{\top} [\nu J^{}(\varphi)] t\right\}\dt t<\infty$.

	Consequently, the result follows if $J_{1n}=o_p(1)$. Inspecting  $J_{1n}$ it is clear that the specific value of $\nu$ will  not impact whether or not $J_{1n}=o_p(1)$, so long as $\nu>0$. Since $\nu$ is fixed it is without loss of generality to take $\nu=1$ in the remainder.

	To demonstrate that $J_{1n}=o_p(1)$ we split $\mathcal{E}_n$ into three regions and analyze $J_{1n}$ over each region. For some $0\le h<\infty$ and $\gamma>0$, with $\gamma=o(1)$, the regions are defined as follows:
	\begin{itemize}
		\item \textbf{Region 1}: $ \|t\|\leq h$.
		\item \textbf{Region 2}: $  h<\|t\|\leq \gamma \sqrt{n}$.
		\item \textbf{Region 3}: $  \|t\|\geq \gamma \sqrt{n}$.
	\end{itemize}

	The remainder of the proof follows by extending similar arguments in the literature, e.g., Theorem 8.2 in \cite{lehmann2006theory} (pg 489), and Theorem 1 in  \cite{chernozhukov2003mcmc}, to accommodate the conditional nature of the result.

	\medskip

	\noindent\textbf{\textbf{Region 1}:} Over this region $\|t\|$ can be neglected and the result follows if
	$$
	\sup_{\|t\|\le h}\left|\exp\left\{\omega(t)\right\} \pi_{}\{T_n(\varphi)+{t}/{\sqrt{n}}|\varphi\}-\pi(\eta^\star_\varphi|\varphi)\exp \{- t^{\top} J^{}(\varphi) t/2\}\right|=o_p(1).
	$$Now, 
	\begin{flalign*}
		&\left|\exp\left\{\omega(t)\right\} \pi_{}\{T_n(\varphi)+{t}/{\sqrt{n}}|\varphi\}-\pi(\eta^\star_\varphi|\varphi)\exp \left\{-t^{\top} J^{}(\varphi) t/2\right\}\right|\\&\le\exp\{\omega(t)\}|\pi_{}\{T_n(\varphi)+{t}/{\sqrt{n}}|\varphi\}-\pi(\eta^\star_\varphi|\varphi)|+\pi(\eta^\star_\varphi|\varphi)|\exp\{\omega(t)\}-\exp \{-t^{\top} J^{}(\varphi)t/2\}|.
	\end{flalign*}First, note that by Assumption 4(i-ii), $Z_n(\varphi)/\sqrt{n}=O_p(1)$ for each $\varphi\in\Phi_\delta$; hence, from the definition $T_n(\varphi)=Z_n(\varphi)/n+\eta^\star_\varphi$, for each $\varphi\in\Phi_\delta$,
	\begin{equation}\label{eq:x1}
		\sup_{\|t\|\le h}|T_n(\varphi)+t_{}/\sqrt{n}-\eta^\star_\varphi|=O_p(1/\sqrt{n}).
	\end{equation}
	
	From Assumption 3, $\pi(\cdot|\varphi)$ is continuous in the first argument, so that by \eqref{eq:x1}, 
	$$
	\sup_{\|t\|\le h}|\pi_{}\{T_n(\varphi)+{t}/{\sqrt{n}}|\varphi\}-\pi(\eta^\star_\varphi|\varphi)|=o_p(1).
	$$
	Also, from \eqref{eq:x1} and Assumption 4(iii), for each $\varphi\in\Phi_\delta$, $$\sup_{\|t\|\le h}|R_n\{T_n(\varphi)+t/\sqrt{n},\varphi\}|=o_p(1);$$ using the equation for  $\omega(t)$ in \eqref{eq:omega}, we then have 
	$$
	\sup_{\|t\|\le h}|\exp\{\omega(t)\}-\exp \{-t^{\top} J^{}(\varphi)t/2\}|=o_p(1).
	$$
	
	Further, since $\sup_{\|t\|\le h}|R_n\{T_n(\varphi)+t/\sqrt{n},\varphi\}|=o_p(1),$ we have $\exp\{\omega(t)\}\le\{1+o_p(1)\}$ over $\|t\|\le h$; since $\pi(\eta|\varphi)$ is continuous in $\eta$ for all $\varphi\in\Phi_\delta$, it follows that $\pi(\eta|\varphi)$ is bounded for $\eta\in\{\|\eta-\eta^\star_\varphi\|\le h/\sqrt{n}\}$. Hence, $J_{1n}=o_{p}(1)$ over Region 1.
	
	\bigskip

	\noindent\textbf{\textbf{Region 2}:}
	For $h$ large enough and $\gamma=o(1)$,  ${J}_{1n}\leq C_{1n}+C_{2n}+C_{3n}$ where
	\begin{flalign*}
		C_{1n}:=&C\sup _{h\le\|t\| \leq \gamma \sqrt{n}}\exp\left\{|R_n\{T_n(\varphi)+t/\sqrt{n},\varphi\}|\right\}\left| \pi_{}\{T_n(\varphi)+{t}/{\sqrt{n}}|\varphi\}-\pi_{}\left(\eta^\star_\varphi|\varphi\right)\right|\\&\times\int_{h\leq \|t\| \leq \gamma \sqrt{n}}\|t\|\exp\{-t^{\top}J^{}(\varphi)t/2\}\dt t \\
		C_{2n}:=&C\int_{h\leq \|t\| \leq \gamma \sqrt{n}}\|t\| \exp\{-t^{\top}J^{}(\varphi)t/2\}\exp\left\{|R_n\{T_n(\varphi)+t/\sqrt{n},\varphi\}|\right\} \pi_{}\{T_n(\varphi)+{t}/{\sqrt{n}}|\varphi\} \dt t \\C_{3n}:=&C \int_{h\leq \|t\| \leq \gamma \sqrt{n}}\|t\|\exp\{-t^{\top}J^{}(\varphi)t/2\}\dt t .
	\end{flalign*}
	
	The first term satisfies $C_{1n}=o_{p}(1)$ for any fixed $h$, so that $C_{1n}=o_{p}(1)$ for $h\rightarrow\infty$, by the dominated convergence theorem. For  $C_{3n}$, from the continuity and positive definiteness of $J(\varphi)$, for each $\varphi\in\Phi_\delta$, there exists $h'$ large enough such that for all $h>h'$, and $\|t\|\ge h$ $$\|t\|\exp\left\{-t^{\top}J_{}(\varphi)t/2\right\}\le \|t\|\exp[-\|t\|^2\lambda_{\text{min}}\{J(\varphi)\}]=O(1/h),$$where $\lambda_{\text{min}}(M)$ denotes the minimum eigenvalue of the matrix $M$. Hence, $C_{3n}$ can be made arbitrarily small by taking $h$ large enough and $\gamma$ small enough.
	
	To demonstrate that $C_{2n}=o_p(1)$, we show that 
	\begin{equation}\label{eq:bound1}
		\exp\{-t^{\top}J(\varphi) t/2\}\exp\left\{|R_n\{T_n(\varphi)+t/\sqrt{n},\varphi\}|\right\}\pi\{T_n(\varphi)+t/\sqrt{n}|\varphi\}\le C\exp\left\{-t^{\top}J(\varphi) t/4\right\},
	\end{equation}with probability converging to one (wpc1), so that $C_{2n}$ can be bounded above by
	\begin{flalign*}
		C_{2n}\le &C\int_{h\le\|t\|\le\gamma \sqrt{n}}\|t\|\exp\left\{-t^{\top}J(\varphi) t/4\right\}\dt t.
	\end{flalign*}Similar to $C_{1n}$ and $C_{3n}$, the RHS of the above can be made arbitrarily small for some $h$ large and $\gamma$ small. 
	
	To demonstrate equation \eqref{eq:bound1}, first note that by continuity of $\pi(\eta|\varphi)$, Assumption 3, $\pi\{T_n(\varphi)+t/\sqrt{n}|\varphi\}$ is bounded over $\{t_{}:h\le \|t_{}\|\le\gamma \sqrt{n}\}$ for each $\varphi\in\Phi_\delta$ and can be dropped from the analysis. Now, since $\|T_n(\varphi)-\eta^\star_\varphi\|=o_p(1)$, for any $\gamma>0$, $\|T_n(\varphi)+t/\sqrt{n}-\eta^\star_\varphi\|<2\gamma$ for all $\|t\|\le\gamma \sqrt{n}$ and $n$ large enough. Therefore, by Assumption 4(iii), there exists some $\gamma'>0$ and $h$ large enough so that
	\begin{align*}
		\sup_{h\le\|t\|\le\gamma' \sqrt{n}}|R_n\{T_n(\varphi)+t/\sqrt{n},\varphi\}| \le & \frac{1}{4}\lambda_{\text{min}}\{J^{}(\varphi)\}\{1+\|t+Z_n(\varphi)/\sqrt{n}\|^2\} \\
		\leq & \frac{1}{4}\lambda_{\text{min}}\{J^{}(\varphi)\}\|t\|^2+O_p(1),
	\end{align*}
	where the last inequality follows since $\|Z_n(\varphi)/\sqrt{n}\|=O_p(1)$, for each $\varphi\in\Phi_\delta$ by Assumption 4(i). Thus,  for some $C>0$, wpc1,
	\begin{flalign*}
		\exp\{\omega(t)\}&\leq \exp\left\{-\frac{1}{2}t^{\top}J(\varphi)t+|R_n\{T_n(\varphi)+t/\sqrt{n},\varphi\}|\right\}\le C\exp\left\{-t^{\top}J^{}(\varphi)t/4\right\}. 
	\end{flalign*}Since the result holds for arbitrary and fixed $\varphi$, it holds for each $\varphi\in\Phi_\delta$.

	\medskip

	\noindent\textbf{\textbf{Region 3}:} For $\gamma \sqrt{n}$ large,
	$
	\int_{\|t\|\ge \gamma \sqrt{n}}\|t\|N\{t;0,J^{-1}(\varphi)\}\dt t,$ can be made arbitrarily small and is therefore dropped from the analysis. Using the definition of $\omega(t)$, and the identity $\eta=Z_n(\varphi)/n+t/\sqrt{n}-\eta^\star_\varphi$, consider
	\begin{flalign*}
		{J}_{1n} := & \int_{\|t\|\ge \gamma \sqrt{n}}\|t\| \exp\{\omega(t) \} \pi\{Z_n(\varphi)/n+t / \sqrt{n}-\eta^\star_\varphi|\varphi\}\dt t \\
		= &\sqrt{n}^{d_\eta+1}\int_{\|\eta-Z_n(\varphi)/n-\eta^\star_\varphi\|\ge \gamma }\|\eta-Z_n(\varphi)/n-\eta^\star_\varphi\|^{} \times \\
		& \exp\left\{M_n(\eta|\varphi)-M_n(\eta^\star_\varphi|\varphi)-\frac{1}{2n}Z_n(\varphi)^{\top} J(\varphi)^{-1}Z_n(\varphi)\right\} \pi\left(\eta|\varphi\right)\dt \eta\\ \le & O_p(1)\sqrt{n}_{n}^{d_\eta+1}\int_{\|\eta-Z_n(\varphi)/n-\eta^\star_\varphi\|\ge \gamma }\|\eta-\eta_\varphi^\star\|\exp\left\{M_n(\eta|\varphi)-M_n(\eta^\star_\varphi|\varphi)\right\} \pi\left(\eta|\varphi\right)\dt \eta,
	\end{flalign*}
	since $n^{-1}Z_n(\varphi)^{\top} J(\varphi)^{-1}Z_n(\varphi)=O_{p}(1)$ under Assumption 4(i) for each $\varphi\in\Phi_\delta$.
	
	From Assumption 2(ii), for fixed $\delta_1>0$, and any $\delta_2>0$, there exists an $\epsilon=\epsilon(\delta_1,\delta_2)>0$ such that
	$$
	\sup_{\varphi\in\Phi_{\delta_1}}\sup_{\|\eta-\eta^\star_\varphi\|\ge \delta_2}\left\{\mathbb{M}(\eta|\varphi)-\mathbb{M}(\eta^\star_\varphi|\varphi)\right\}\le -\epsilon.
	$$Therefore, the above and the uniform convergence in Assumption 2(i) together imply that
	\begin{equation}
		\label{eq:expconv}
		\lim_{n\rightarrow\infty}P^{(n)}_0\left[\sup_{\varphi\in\Phi_{\delta_1}}\sup_{\|\eta-\eta^\star_\varphi\|\geq \delta_2}\exp\left\{M_n(\eta|\varphi)-M_n(\eta^\star_\varphi|\varphi)\right\}\leq \exp(-\epsilon n^2)\right]=1.
	\end{equation}
	Since for each $\varphi\in\Phi_\delta$, $Z_n(\varphi)/\sqrt{n}=O_p(1)$, by Assumption 4(i), from equation \eqref{eq:expconv} we obtain
	\begin{align*}
		{J}_{1n} & \le \{1+o_p(1)\}O_p(1)\sqrt{n}^{d_\eta+1}\int_{\|\eta-\eta^\star_\varphi\|\ge \gamma }\|\eta-\eta^\star_\varphi\|^{}\pi\left(\eta|\varphi\right)\exp\{M_n(\eta|\varphi)-M_n(\eta^\star_\varphi|\varphi)\}\dt \eta\\&\leq O_p(1) \exp\left(-\epsilon n^2\right)\sqrt{n}^{d_\eta+1}\int_{\|\eta-\eta^\star_\varphi\|\ge \gamma }\|\eta-\eta^\star_\varphi\|^{}\pi\left(\eta|\varphi\right)\dt\eta\\&\le O_p(1) \exp\left(-\epsilon n^2\right)\sqrt{n}^{d_\eta+1}\left\{\int_{\mathcal{E}}\|\eta\|^{}\pi\left(\eta|\varphi\right)\dt\eta+ \|\eta^\star_\varphi\|\right\}.
	\end{align*}By Assumption 3(ii),  $\int_{\mathcal{E}}\|\eta\|\pi(\eta|\varphi)\dt\eta<\infty$ for each  $\varphi\in\Phi_{\delta_1}$. By Assumptions 1-2, $\eta^\star_\varphi$ exists  for each $\varphi\in\Phi_{\delta_1}$. Hence, 
	$$
	J_{1n}\le O_p(\exp\{-\epsilon n^2\}\sqrt{n}^{d_\eta+1})=o_p(1),
	$$ for each $\varphi\in\Phi_{\delta_1}$, and some $\delta_1>0$.

	Placing all three regions together we obtain
	\begin{equation}
		\int_{\mathcal{E}_n}|\pi(t|\bw,\varphi)-N\{t;0,[\nu J(\eta^\star_\varphi|\varphi)]^{-1}\}|=o_p(1).
		\label{eq:over_res}
	\end{equation}for each $\varphi\in\Phi_\delta$.	The result follows by using the fact that the total variation norm is invariant under a change of location. Namely, the result stated in the theorem follows from \eqref{eq:over_res} by defining 
	$
	\xi=\sqrt{n}(\eta-\eta^\star_\varphi), 
	$ and consider the change in location from $t:=\sqrt{n}(\eta-\eta^\star_\varphi)-Z_n(\varphi)/\sqrt{n}$ to $\xi=t+Z_n(\eta^\star|\varphi)/\sqrt{n}=\sqrt{n}(\eta-\eta^\star_\varphi).$

\end{proof}

\section{Joint Behavior of cut posterior}\label{sec:joint}

While we argue that the conditional view of the posterior for $\eta$ presented in Theorem 1 is most appropriate, it is feasible to obtain a large sample result for the joint cut posterior. 
To obtain such a result, we require smoothness conditions, in $\varphi$, for the functions $\Delta_{n_2}(\eta|\varphi)$ and $J(\eta|\varphi)$ in Assumption 4. Further, we assume $\Delta_{n_2}(\eta|\varphi)$ is differentiable in $\varphi$, but this can be weakened to stochastic differentiability at the cost of additional technicalities.

Throughout the remainder of this section, to make clear that we are considering joint inference on $\theta=(\varphi^{\top},\eta^{\top})^{\top}$, rather than conditional inference for $\eta\mid\varphi$, we abuse notation and write terms that depend on both $\eta,\varphi$ as $(\eta,\varphi)$ and not $\eta\mid\varphi$; e.g., we write $\Delta_{n_2}(\eta,\varphi)$ and $J(\eta,\varphi)$, rather than using the conditioning notation.

\begin{assumption}\label{ass:fur_exp}
	For $\varphi\in\Phi_\delta$, and $\Delta_{n_2}(\eta,\varphi)$, $J(\eta,\varphi)$ as in Assumption \ref{ass:expand}, the following are satisfied: (i) $\nabla^2_{\eta\varphi}\mathbb{M}(\eta^\star,\varphi)$ and $J(\eta,\varphi)$ are both continuous in $\varphi$;  (ii) $\nabla_\varphi\Delta_{n_2}(\eta^\star,\varphi)$ exists and satisfies $\sup_{\varphi\in\Phi_\delta}\|\frac{1}{n_2}\nabla_\varphi\Delta_{n_2}(\eta^\star,\varphi)-\nabla_{\eta\varphi}\mathbb{M}(\eta^\star,\varphi)\|=o_p(1)$. 
\end{assumption}

To present the joint distribution of the cut posterior, we require a few additional notations. Define
$$
\Sigma:= \begin{pmatrix}
	\Sigma_{11}& \Sigma_{12}\\
	\Sigma_{21}&\Sigma_{22}
\end{pmatrix}=\nu\begin{pmatrix}
	-\nabla^2_{\varphi\varphi}\mathbb{L}(\varphi^\star)& \nabla^2_{\varphi\eta}\mathbb{M}(\eta^\star,\varphi^\star)\\
	\nabla^2_{\eta\varphi}\mathbb{M}(\eta^\star,\varphi^\star)&-\nabla^2_{\eta\eta}\mathbb{M}(\eta^\star,\varphi^\star)
\end{pmatrix}
$$ and recall that $\zeta=\lim_{n_1,n_2\rightarrow\infty}n_1/n_2$, with $0<\zeta<\infty$, and let $\vartheta:=\zeta^{-1/2}$.  Define
$$	V:=\begin{pmatrix}
	V_{11}&V_{12}\\V_{21}&V_{22}
\end{pmatrix}
=
\begin{pmatrix}
	\Sigma_{11}^{-1}&-\vartheta\cdot \Sigma_{11}^{-1}\Sigma_{12}\Sigma_{22}^{-1}\\-\vartheta\cdot \Sigma_{22}^{-1}\Sigma_{21}\Sigma_{11}^{-1}&\Sigma_{22}^{-1}+\vartheta^{2}\Sigma_{22}^{-1}\Sigma_{21}\Sigma_{11}^{-1}\Sigma_{12}\Sigma_{22}^{-1}
\end{pmatrix},
$$and note that by block matrix inversion we have
\begin{equation}\label{eq:eq_v} 
	V^{-1}:=\begin{pmatrix}
		\Sigma_{11}+\vartheta^2\cdot\Sigma_{12}\Sigma_{22}^{-1}\Sigma_{21}&\vartheta\cdot\Sigma_{12}\\\vartheta\cdot\Sigma_{21}&\Sigma_{22}
	\end{pmatrix}
\end{equation}
In addition, define $$D_n=\begin{pmatrix}
	n_1\cdot I_{d_\varphi}&0\\0&n_2\cdot I_{d_\eta}
\end{pmatrix}$$ and $Z_n=(Z_{n_1}^{\top},Z_{n_2}^{\top})^{\top}$, where
\begin{align*}
	\begin{pmatrix}
		Z_{n_1}\\Z_{n_2}
	\end{pmatrix} & :=
	\begin{pmatrix}\Sigma_{11}^{-1}&0\\-\vartheta\cdot \Sigma_{22}^{-1}\Sigma_{12}\Sigma_{11}^{-1}&\Sigma_{22}^{-1}		
	\end{pmatrix}D_n^{-1/2}\begin{pmatrix}\nabla_\varphi L_{n_1}(\varphi^\star)\\\Delta_{n_2}(\eta^\star,\varphi^\star)\end{pmatrix} \\
	& =\begin{pmatrix}
		\Sigma_{11}^{-1}\nabla_\varphi L_{n_1}(\varphi^\star)/\sqrt{n_1}\\\Sigma_{22}^{-1}\Delta_{n_2}(\eta^\star,\varphi^\star)/\sqrt{n_2}-\vartheta \cdot\Sigma_{22}^{-1}\Sigma_{12}\Sigma_{11}^{-1}\nabla_\varphi L_{n_1}(\varphi^\star)/\sqrt{n_1}
	\end{pmatrix},
\end{align*}
and define  $$\phi:=\sqrt{n_1}(\varphi-\varphi^\star)-Z_{n_1},\quad\xi:=\sqrt{n_2}(\eta-\eta^\star)-Z_{n_2}, \quad t:=(\phi^{\top},\xi^{\top})^{\top},\quad T_n:=D_n^{-1/2}Z_n+\theta^\star.$$ The cut posterior for $t$ is then given by $\pi_{\cut}(t|\bz,\bw)=|D_n|^{-1/2}\pi_\cut(D_n^{-1/2}t+T_n\mid \bz,\bw)$, which has support $\mathcal{T}_n:=\{t=D_n^{1/2}(\theta-\theta^\star)-Z_n:\theta\in\Theta\}$.  

\begin{corollary}\label{corr:bvm_rev}
	Under Assumptions \ref{ass:ident_L}-\ref{ass:expand} and \ref{ass:fur_exp},
	$
	\int_{\mathcal{T}_n}\left|\pi_{\cut}(t|\bz,\bw)-N\left\{t;0,V\right\}\right|\dt t=o_p(1)
	$.
\end{corollary}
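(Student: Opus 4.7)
The plan is to leverage the factorization
\[
\pi_\cut(\varphi,\eta\mid\bz,\bw)=\pi_\cut(\varphi\mid\bz)\,\pi(\eta\mid\bw,\varphi),
\]
to which Lemma \ref{lem:varphi} and Theorem \ref{prop:bvm1} apply to the two factors separately, and then to Taylor-expand the $\varphi$-dependent pieces of the conditional Gaussian approximation of $\eta\mid\bw,\varphi$ around $\varphi^\star$ to read off the joint Gaussian limit. Lemma \ref{lem:varphi} gives that, in the local coordinate $\phi=\sqrt{n_1}(\varphi-\varphi^\star)-Z_{n_1}$, the marginal $\pi_\cut(\phi\mid\bz)$ is $L^1$-close to $N(0,\Sigma_{11}^{-1})$, while Theorem \ref{prop:bvm1} gives, pointwise in $\varphi\in\Phi_\delta$, that the conditional posterior of $\eta$ given $\varphi$ is $L^1$-close to a Gaussian centered at $\eta^\star_\varphi+J(\eta^\star_\varphi\mid\varphi)^{-1}\Delta_{n_2}(\eta^\star_\varphi\mid\varphi)/n_2$ with covariance $[n_2 J(\eta^\star_\varphi\mid\varphi)]^{-1}$.

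The analytic core is to expand the three $\varphi$-dependent objects $\eta^\star_\varphi$, $\Delta_{n_2}(\eta^\star_\varphi\mid\varphi)$, and $J(\eta^\star_\varphi\mid\varphi)$ around $\varphi^\star$, exploiting that on the effective support of $\pi_\cut(\varphi\mid\bz)$ supplied by Lemma \ref{lem:varphi} we have $\|\varphi-\varphi^\star\|=O_p(1/\sqrt{n_1})$. Applying the implicit function theorem to $\nabla_\eta\mathbb{M}(\eta^\star_\varphi\mid\varphi)=0$ under Assumptions \ref{ass:ident_M}--\ref{ass:expand} together with Assumption \ref{ass:fur_exp}(i) yields
\[
\eta^\star_\varphi=\eta^\star+\Sigma_{22}^{-1}\Sigma_{21}(\varphi-\varphi^\star)+o(\|\varphi-\varphi^\star\|).
\]
A corresponding Taylor expansion of $\Delta_{n_2}(\eta^\star_\varphi\mid\varphi)$ around $(\eta^\star,\varphi^\star)$, using Assumption \ref{ass:fur_exp}(ii) and the identities $\nabla_{\eta\eta}^2\mathbb{M}(\eta^\star,\varphi^\star)=-\Sigma_{22}$ and $\nabla_{\eta\varphi}^2\mathbb{M}(\eta^\star,\varphi^\star)=\Sigma_{21}$ (absorbing $\nu$), has leading order contribution
\[
n_2\bigl[-\Sigma_{22}\cdot\Sigma_{22}^{-1}\Sigma_{21}+\Sigma_{21}\bigr](\varphi-\varphi^\star)=0,
\]
so that $\Delta_{n_2}(\eta^\star_\varphi\mid\varphi)/\sqrt{n_2}=\Delta_{n_2}(\eta^\star,\varphi^\star)/\sqrt{n_2}+o_p(1)$ uniformly on the relevant neighborhood. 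Continuity of $J$ from Assumption \ref{ass:fur_exp}(i) then gives $J(\eta^\star_\varphi\mid\varphi)=\Sigma_{22}+o_p(1)$ there.

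Substituting these expansions into the conditional Gaussian mean and translating into the local coordinates $(\phi,\xi)$, the conditional mean of $\sqrt{n_2}(\eta-\eta^\star)$ given $\varphi$ becomes
\[
\Sigma_{22}^{-1}\Delta_{n_2}(\eta^\star,\varphi^\star)/\sqrt{n_2}+\vartheta\Sigma_{22}^{-1}\Sigma_{21}\bigl(\phi+Z_{n_1}\bigr)+o_p(1),
\]
where we have used $\sqrt{n_2}(\varphi-\varphi^\star)=\vartheta\bigl(\phi+Z_{n_1}\bigr)+o(1)$. The definition of $Z_{n_2}$ in the corollary is engineered so that subtracting it removes the two $\varphi$-independent terms of this mean, leaving $\xi\mid\phi\approx N(\vartheta\Sigma_{22}^{-1}\Sigma_{21}\phi,\Sigma_{22}^{-1})$, with covariance inherited from Theorem \ref{prop:bvm1}. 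Combined with $\phi\approx N(0,\Sigma_{11}^{-1})$, a direct block-matrix computation verifies that the joint precision of $(\phi,\xi)$ matches the matrix $V^{-1}$ displayed in \eqref{eq:eq_v}, so the joint Gaussian approximation is $N(0,V)$. The $L^1$ bound is then obtained by combining the two $L^1$ approximations on the factors via the triangle inequality
\[
\int\bigl|\pi_\cut(\phi,\xi)-\pi^\star(\phi)\pi^\star(\xi\mid\phi)\bigr|\dt\phi\,\dt\xi\le\int\pi_\cut(\phi)\!\int\bigl|\pi(\xi\mid\phi)-\pi^\star(\xi\mid\phi)\bigr|\dt\xi\,\dt\phi+\int\bigl|\pi_\cut(\phi)-\pi^\star(\phi)\bigr|\dt\phi.
\]

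The main obstacle is that Theorem \ref{prop:bvm1} is stated pointwise in $\varphi$, whereas the argument above requires the conditional $L^1$ approximation to hold uniformly over the $O(1/\sqrt{n_1})$ neighborhood of $\varphi^\star$ on which $\pi_\cut(\varphi\mid\bz)$ places its mass. The uniformity hard-wired into Assumptions \ref{ass:ident_M}, \ref{ass:expand}, and \ref{ass:fur_exp} is designed for exactly this purpose: in particular, the uniform remainder bound in Assumption \ref{ass:expand}(iii), the uniform continuity implied by Assumption \ref{ass:fur_exp}, and the uniform prior moment control in Assumption \ref{ass:prior}(ii) allow the three-region analysis used in the proof of Theorem \ref{prop:bvm1} to be carried out uniformly over $\varphi\in\Phi_\delta$. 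Once the uniform version of the conditional approximation is in hand, integrating against Lemma \ref{lem:varphi} via the above triangle inequality yields the claimed joint $L^1$ statement.
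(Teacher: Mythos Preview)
Your approach is correct in outline and genuinely different from the paper's. The paper does \emph{not} reuse Lemma~\ref{lem:varphi} and Theorem~\ref{prop:bvm1} as black boxes; instead it expands the log cut posterior density directly around $\theta^\star=(\varphi^\star,\eta^\star)$, writing $M_{n_2}(\eta,\varphi)-M_{n_2}(\eta^\star,\varphi^\star)$ and $L_{n_1}(\varphi)-L_{n_1}(\varphi^\star)$ as quadratics in the joint local parameter $t=(\phi^\top,\xi^\top)^\top$ plus remainders, and then reruns the three-region analysis of Theorem~\ref{prop:bvm1} on the joint $t$. The new cross-derivative remainder terms (arising from expanding $\Delta_{n_2}(\eta^\star,\varphi)$ in $\varphi$ and from replacing $J(\varphi)$ by $J(\varphi^\star)$) are shown to satisfy the analogue of Assumption~\ref{ass:expand}(iii) in a separate lemma. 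In particular, the paper never invokes $\eta^\star_\varphi$ or the implicit function theorem: it expands $\Delta_{n_2}(\eta^\star,\varphi)$ at the \emph{fixed} point $\eta^\star$ using Assumption~\ref{ass:fur_exp}(ii), which sidesteps any smoothness requirement on $\eta\mapsto\Delta_{n_2}(\eta,\varphi)$.

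Your factorize--linearize--combine route is more modular and makes the role of the product structure $\pi_\cut=\pi_\cut(\varphi\mid\bz)\pi(\eta\mid\bw,\varphi)$ explicit, which is conceptually attractive. The price is twofold. First, the uniformity issue you flag is real: the displayed triangle inequality needs $\int\pi_\cut(\varphi\mid\bz)\bigl[\int|\pi(\eta\mid\bw,\varphi)-N_\varphi(\eta)|\dt\eta\bigr]\dt\varphi=o_p(1)$, which requires the conditional TV error from Theorem~\ref{prop:bvm1} to be $o_p(1)$ uniformly over a shrinking neighbourhood of $\varphi^\star$, not just pointwise; you would need to revisit each region of that proof and check that the bounds hold uniformly in $\varphi\in\Phi_\delta$ (they do, given the uniform clauses in Assumptions~\ref{ass:ident_M}--\ref{ass:expand}, but this is work). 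Second, your use of the implicit function theorem and the expansion of $\Delta_{n_2}(\eta^\star_\varphi\mid\varphi)$ in the $\eta$-direction require differentiability of $\Delta_{n_2}$ in $\eta$, which is not among the stated assumptions (only $\nabla_\varphi\Delta_{n_2}(\eta^\star,\varphi)$ is controlled by Assumption~\ref{ass:fur_exp}(ii)); this is fine under the twice-differentiability sufficient condition, but strictly speaking it is an extra hypothesis. The paper's direct expansion around $(\eta^\star,\varphi^\star)$ avoids both issues at the cost of redoing the three-region argument in the joint parameter.
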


Corollary \ref{corr:bvm_rev} extends the results obtained by \cite{pompe2021asymptotics} to cut posterior densities based on arbitrary criterion functions. Using boundedness and differentiability assumptions, and a Taylor series approximation, \cite{pompe2021asymptotics} derive a Laplace approximation to the cut posterior via an expansion of the log joint cut posterior. Our results extend theirs in several ways: 1) our smoothness conditions imposed on $M_n(\eta,\varphi)$ and $L_n(\varphi)$ are weaker than those used in \cite{pompe2021asymptotics}; and 2) our results are valid for a wide range of criterion functions one may wish to choose, including quasi-likelihoods, tempered likelihoods, or any other M-estimation criterion.

Corollary \ref{corr:bvm_rev} is presented in a slightly different manner from Proposition 3 in \cite{pompe2021asymptotics}. Our result considers the posterior behavior of  $t=D_n^{1/2}(\theta-\theta^\star)-Z_n
$, while the analysis of \cite{pompe2021asymptotics} considers the posterior behavior of $\sqrt{n_2}(\theta-\theta^\star)$. In this way, the scaling constants in Proposition 3 of \cite{pompe2021asymptotics} differ from those in Corollary \ref{corr:bvm_rev}. Since the rates of convergence for the two components, $\varphi$ and $\eta$, are different, we believe it more direct to consider $t$, which cleanly disentangles the two rates, rather than to bundle the two rates together as in the result of \cite{pompe2021asymptotics}. A result for $\sqrt{n_2}(\theta-\theta^\star)$ can be obtained by instead considering the behavior of the random variable $n_2^{-1/2}D_n^{1/2}(\theta-\theta^\star)-Z_n/\sqrt{n_2}$.

The following result follows immediately from Corollary \ref{corr:bvm_rev} using standard arguments (see, e.g., Theorem 8.3 on page 490 of \citealp{lehmann2006theory}), and the proof is therefore omitted for brevity. 
\begin{corollary}\label{corr:mean}
	If $D_n^{-1/2}\begin{pmatrix}\nabla_\varphi L_{n_1}(\varphi^\star)^\top,&\Delta_{n_2}(\eta^\star,\varphi^\star)^\top\end{pmatrix}^\top\Rightarrow N(0,\Omega)$, then for $\bar\theta_n:=\int_\Theta\theta\pi_\cut(\theta|\bz,\bw)\dt\theta$, we have that
	$$
	D_n^{1/2}(\bar\theta_n-\theta^\star)\Rightarrow N\left(0,\begin{pmatrix}\Sigma_{11}^{-1}&0\\-\vartheta\cdot \Sigma_{22}^{-1}\Sigma_{12}\Sigma_{11}^{-1}&\Sigma_{22}^{-1}		
	\end{pmatrix}\Omega\begin{pmatrix}\Sigma_{11}^{-1}&-\vartheta\cdot \Sigma_{11}^{-1}\Sigma_{12}\Sigma_{22}^{-1}\\0&\Sigma_{22}^{-1}		
	\end{pmatrix}\right).
	$$
	
\end{corollary}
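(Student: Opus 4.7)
The plan is to extract convergence of the cut posterior mean from the Bernstein--von Mises-type statement of Corollary~\ref{corr:bvm_rev}, and then read off the limiting distribution of $D_n^{1/2}(\bar\theta_n-\theta^\star)$ from the CLT hypothesis on the centred score. Because Corollary~\ref{corr:bvm_rev} already shows, in the recentred coordinate $t=D_n^{1/2}(\theta-\theta^\star)-Z_n$, that the cut posterior is close in total variation to $N(0,V)$, whose mean is zero, the randomness of $D_n^{1/2}(\bar\theta_n-\theta^\star)$ should be driven entirely by the recentring term $Z_n$, which is itself a fixed linear functional of the score vectors.

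The first step is purely algebraic: inverting the change of variables gives $\theta=D_n^{-1/2}(t+Z_n)+\theta^\star$, and integrating against $\pi_{\cut}(\cdot\mid\bz,\bw)$ yields
\begin{equation*}
D_n^{1/2}(\bar\theta_n-\theta^\star)=\bar t_n+Z_n,\qquad \bar t_n:=\int_{\mathcal{T}_n}t\,\pi_{\cut}(t\mid\bz,\bw)\,\dt t.
\end{equation*}
The substantive step is to show $\bar t_n=o_p(1)$. Corollary~\ref{corr:bvm_rev} only delivers TV convergence, so one must upgrade to convergence of the first moment. I would do this by refining the proof of Corollary~\ref{corr:bvm_rev} along the same three-region strategy used in the proof of Theorem~\ref{prop:bvm1}, namely re-running the argument with the integrand weighted by $\|t\|$ and using (i) local Gaussianity near the mode, (ii) the quadratic upper bound on the log-posterior in the intermediate region, and (iii) the exponential tail bound in the far region combined with the prior-moment condition in Assumption~\ref{ass:prior}(ii). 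This yields the stronger $\|t\|$-weighted convergence $\int_{\mathcal{T}_n}\|t\|\,|\pi_{\cut}(t\mid\bz,\bw)-N(t;0,V)|\,\dt t=o_p(1)$, from which $\bar t_n=o_p(1)$ follows. Alternatively one can invoke Theorem~8.3 of \cite{lehmann2006theory} (p.~490) in its standard Bernstein--von Mises form.

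The final step is to identify the distributional limit of $Z_n$. The definition in Section~\ref{sec:joint} expresses $Z_n=A\cdot D_n^{-1/2}\bigl(\nabla_\varphi L_{n_1}(\varphi^\star)^{\top},\,\Delta_{n_2}(\eta^\star,\varphi^\star)^{\top}\bigr)^{\top}$ with
\begin{equation*}
A=\begin{pmatrix}\Sigma_{11}^{-1} & 0 \\ -\vartheta\,\Sigma_{22}^{-1}\Sigma_{12}\Sigma_{11}^{-1} & \Sigma_{22}^{-1}\end{pmatrix},
\end{equation*}
so the CLT hypothesis together with the continuous mapping theorem gives $Z_n\Rightarrow N(0,A\Omega A^{\top})$. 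Combining this with $\bar t_n=o_p(1)$ via Slutsky's theorem yields $D_n^{1/2}(\bar\theta_n-\theta^\star)\Rightarrow N(0,A\Omega A^{\top})$, and a direct block-matrix multiplication verifies that $A\Omega A^{\top}$ matches the covariance displayed in the statement. The main obstacle is the moment upgrade in the second step; everything else is bookkeeping plus an application of Slutsky.
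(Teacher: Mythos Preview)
Your proposal is correct and matches the paper's approach exactly: the paper does not give a proof, stating only that the result follows from Corollary~\ref{corr:bvm_rev} via the standard argument of Theorem~8.3 in \cite{lehmann2006theory} (p.~490), which is precisely the moment-upgrade-plus-Slutsky route you describe. In fact you have written out more detail than the paper provides.
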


Taken together, Corollaries \ref{corr:bvm_rev} and \ref{corr:mean} demonstrate that the cut posterior does not correctly quantify uncertainty for the posterior mean. In the case of a correctly specified likelihood criterion, Corollary \ref{corr:mean} demonstrates that the posterior mean will not have the same asymptotic variance as the maximum likelihood estimator since it neglects the term $\nabla_{\varphi\varphi}^2\mathbb{M}(\eta^\star,\varphi^\star)$. Therefore, the posterior mean of the cut posterior will be inefficient if the model is correctly specified.

\begin{proof}[Proof of Corollary \ref{corr:bvm_rev}]
	To simplify the proof we take $\nu=1$ in what follows.  Use Assumption \ref{ass:fur_exp} to expand $\Delta_{n_2}(\eta^\star,\varphi)/\sqrt{n_2}$ as 
	\begin{flalign*}
		\frac{\Delta_{n_2}(\eta^\star,\varphi)}{\sqrt{n_2}}&=\frac{1}{\sqrt{n_2}}\Delta_{n_2}(\eta^\star,\varphi^\star)+\nabla_{\eta\varphi}^2 \mathbb{M}(\eta^\star,\varphi^\star)\sqrt{n_2}(\varphi-\varphi^\star) \\&+\{\nabla_\varphi\Delta_{n_2}(\eta^\star,\bar\varphi)/n_2-\nabla_{\eta\varphi}^2 \mathbb{M}(\eta^\star,\bar\varphi)\}\sqrt{n_2}(\varphi-\varphi^\star)\\&+\{\nabla_{\eta\varphi}^2 \mathbb{M}(\eta^\star,\bar\varphi)-\nabla_{\eta\varphi}^2 \mathbb{M}(\eta^\star,\varphi^\star)\}\sqrt{n_2}(\varphi-\varphi^\star)\\&=\frac{\Delta_{n_2}(\eta^\star,\varphi^\star)}{\sqrt{n_2}}+\nabla_{\eta\varphi}^2 \mathbb{M}(\eta^\star,\varphi^\star)\sqrt{n_2}(\varphi-\varphi^\star)+o_p(1)\\&+\{\nabla_{\eta\varphi}^2 \mathbb{M}(\eta^\star,\bar\varphi)-\nabla_{\eta\varphi}^2 \mathbb{M}(\eta^\star,\varphi^\star)\}\sqrt{n_2}(\varphi-\varphi^\star)\\&=\frac{\Delta_{n_2}(\eta^\star,\varphi^\star)}{\sqrt{n_2}}+\nabla_{\eta\varphi}^2 \mathbb{M}(\eta^\star,\varphi^\star)\cdot\vartheta\cdot \sqrt{n_1}(\varphi-\varphi^\star)+o_p(1)\\&+\{\nabla_{\eta\varphi}^2 \mathbb{M}(\eta^\star,\bar\varphi)-\nabla_{\eta\varphi}^2 \mathbb{M}(\eta^\star,\varphi^\star)\}\cdot\vartheta\cdot \sqrt{n_1}(\varphi-\varphi^\star),
	\end{flalign*}for some intermediate value satisfying $\|\bar\varphi-\varphi^\star\|\le\|\varphi-\varphi^\star\|$, and where the $o_p(1)$ term follows by applying Assumption \ref{ass:fur_exp}(ii). From equation A.1 in the proof of Theorem 1, we have that, for 
	\begin{flalign}\label{eq:a12}
		M_n(\eta,\varphi)-M_n(\eta^\star,\varphi^\star)=-\frac{1}{2}t_\varphi^{\top}J(\varphi^\star)t_\varphi+\frac{1}{2}Z_{n_2}(\varphi^\star)^{\top}J^{}(\varphi^\star)Z_{n_2}(\varphi^\star)+R_n(\eta,\varphi),
	\end{flalign}for some remainder term $R_n(\eta,\varphi)$. However, using the above expansion we have
	\begin{flalign*}
		t_\varphi&=\sqrt{n_2}(\eta-\eta^\star)-J(\varphi)^{-1}\Delta_n(\eta^\star,\varphi)
		\\&	=\sqrt{n_2}(\eta-\eta^\star)-J(\varphi)^{-1}\left\{\frac{\Delta_{n_2}(\eta^\star,\varphi^\star)}{\sqrt{n_2}}+\nabla_{\eta\varphi}^2 \mathbb{M}(\eta^\star,\varphi^\star)\vartheta\cdot \sqrt{n_1}(\varphi-\varphi^\star)\right\}+o_p(1)\\&+J(\varphi)^{-1}\{\nabla_{\eta\varphi}^2 \mathbb{M}(\eta^\star,\bar\varphi)-\nabla_{\eta\varphi}^2 \mathbb{M}(\eta^\star,\varphi^\star)\}\vartheta\cdot \sqrt{n_1}(\varphi-\varphi^\star).
	\end{flalign*}
	Since $J(\varphi)^{-1}$ is continuous in $\varphi$, we have that 
	\begin{flalign*}
		t_\varphi&=\sqrt{n_2}(\eta-\eta^\star)-J(\varphi)^{-1}\Delta_n(\eta^\star,\varphi)
		\\&	=\sqrt{n_2}(\eta-\eta^\star)-J(\varphi^\star)^{-1}\left\{\frac{\Delta_{n_2}(\eta^\star,\varphi^\star)}{\sqrt{n_2}}+\nabla_{\eta\varphi}^2 \mathbb{M}(\eta^\star,\varphi^\star)\cdot\vartheta\cdot \sqrt{n_1}(\varphi-\varphi^\star)\right\}\\&+J(\varphi^\star)^{-1}\{\nabla_{\eta\varphi}^2 \mathbb{M}(\eta^\star,\bar\varphi)-\nabla_{\eta\varphi}^2 \mathbb{M}(\eta^\star,\varphi^\star)\}\cdot\vartheta\cdot \sqrt{n_1}(\varphi-\varphi^\star)+o_p(\|\sqrt{n_1}(\varphi-\varphi^\star)\|)
	\end{flalign*}Further, since $\nabla_{\eta\varphi}^2\mathbb{M}(\eta^\star,\varphi)$ is continuous in $\varphi$, and since $\sqrt{n}(\varphi-\varphi^\star)=O_p(1)$ by Lemma 1, we see that 
	\begin{flalign*}
		t_\varphi&=\sqrt{n_2}(\eta-\eta^\star)-J(\varphi^\star)^{-1}\left\{\frac{\Delta_{n_2}(\eta^\star,\varphi^\star)}{\sqrt{n_2}}+\nabla_{\eta\varphi}^2 \mathbb{M}(\eta^\star,\varphi^\star)\cdot\vartheta\cdot \sqrt{n_1}(\varphi-\varphi^\star)\right\}+o_p(1).
	\end{flalign*}
	
	Now, use the fact that $J(\varphi^\star)=\Sigma_{22}$, $\Sigma_{12}=\nabla_{\eta\varphi}^2\mathbb{M}(\theta^\star)$, and re-arrange the first term in $\nu$ as 
	\begin{flalign*}
		t_\varphi&=\sqrt{n_2}(\eta-\eta^\star)-\Sigma_{22}^{-1}\left\{\Delta_{n_2}(\eta^\star,\varphi)/\sqrt{n_2}+\Sigma_{12}\cdot\vartheta\cdot Z_{n_1}\right\}-\Sigma_{22}^{-1}\Sigma_{21}\cdot\vartheta\cdot\{\sqrt{n_1}(\varphi-\varphi^\star)-Z_{n_1}\}+o_p(1)
		\\&=\sqrt{n_2}(\eta-\eta^\star)-Z_{n_2}-\Sigma_{22}^{-1}\Sigma_{21}\cdot\vartheta\cdot\{\sqrt{n_1}(\varphi-\varphi^\star)-Z_{n_1}\}+o_p(1).
	\end{flalign*}
	Recalling $$\xi:=\sqrt{n_2}(\eta-\eta^\star)-Z_{n_2},\quad\phi:=\sqrt{n_1}(\varphi-\varphi^\star)-Z_{n_1},$$ we then see that 
	$$
	t_\varphi=\xi-\vartheta\cdot\Sigma_{12}\phi+o_p(1).
	$$Applying this into equation \eqref{eq:a12} then yields 
	$$
	M_n(\eta,\varphi)-M_n(\eta^\star,\varphi^\star)=-\frac{1}{2}\left(\xi-\vartheta\cdot\Sigma_{12}\phi\right)^{\top}\Sigma_{22}\left(\xi-\vartheta\cdot\Sigma_{12}\phi\right)+\frac{1}{2}Z_{n_2}(\varphi^\star)^{\top}\Sigma_{22}Z_{n_2}(\varphi^\star)+R_n(\eta,\varphi),
	$$
	
	Similarly, from Assumption \ref{ass:ident_L}, we have the following expansion for $L_{n_1}(\varphi)-L_{n_1}(\varphi^\star)$:
	\begin{flalign*}
		L_{n_1}(\varphi)-L_{n_1}(\varphi^\star)&=\sqrt{n_1}(\varphi-\varphi^\star )^{\top}\nabla_\varphi L_{n_1}(\varphi^\star)/\sqrt{n_1}-\frac{n_1}{2}(\varphi-\varphi^\star)^{\top}[-\nabla_{\varphi\varphi}\mathbb{L}(\varphi^\star)](\varphi-\varphi^\star)+R_{4n}(\varphi)\\&=-\frac{1}{2}\{\sqrt{n_1}(\varphi-\varphi^\star)-Z_{n_1}\}^{\top}\Sigma_{11}\{\sqrt{n_1}(\varphi-\varphi^\star)-Z_{n_1}\}+\frac{1}{2}Z_{n_1}^{\top}\Sigma_{11}^{-1}Z_{n_1}	R_{4n}(\varphi)\\&=-\frac{1}{2}\phi^{\top}\Sigma_{11}\phi +\frac{1}{2}Z_{n_1}^{\top}\Sigma_{11}^{-1}Z_{n_1}	R_{4n}(\varphi)
	\end{flalign*}where, by Assumption \ref{ass:ident_L}, the remainder term $R_{4n}(\varphi)$ satisfies $R_{4n}(\varphi)/[1+n_1\|\varphi-\varphi^\star\|^2]=o_p(1)$. 
	
	Recalling that $  Q_n(\theta)=  L_{n_1}(\varphi)+  M_{n_2}(\eta,\varphi)$, and adding the two expansions together yields, for $T_n=D_n^{-1/2}Z_n+\theta^\star$, 
	\begin{flalign*}
		Q_n(D_n^{-1/2}t+T_n) - Q_n(\theta^\star)	&=-\frac{1}{2}\phi^{\top}\Sigma_{11}^{}\phi -\frac{1}{2}\{\xi-\vartheta\cdot \Sigma_{22}^{-1}\Sigma_{21}\phi\}^{\top}\Sigma_{22}\{\xi-\vartheta\cdot \Sigma_{22}^{-1}\Sigma_{21}\phi\}\\&+\frac{1}{2}Z_{n_1}^{\top}\Sigma_{11}^{-1}Z_{n_1}+\frac{1}{2}Z_{n_2}^{\top}\Sigma_{22}^{-1}Z_{n_2}+\sum_{j=1}^{4}R_{jn}(D_n^{-1/2}t+T_n).
	\end{flalign*}
	Lastly, we can rewrite the above equation in the following form: 
	\begin{flalign*}
		Q_n(D_n^{-1/2}t+T_n) - Q_n(\theta^\star)	&=-\frac{1}{2}(\phi^\top,\xi^\top)^{\top}\begin{pmatrix}
			\Sigma_{11}^{}+\vartheta^2\Sigma_{12}\Sigma_{22}^{-1}\Sigma_{21} &\vartheta\cdot \Sigma_{12}\\\vartheta\cdot \Sigma_{21}&\Sigma_{22}
		\end{pmatrix}\begin{pmatrix}
			\phi\\ \xi
		\end{pmatrix}+\frac{1}{2}Z_{n_1}^{\top}\Sigma_{11}^{-1}Z_{n_1}\nonumber\\&+\frac{1}{2}Z_{n_2}^{\top}\Sigma_{22}^{-1}Z_{n_2}+\sum_{j=1}^{4}R_{jn}(D_n^{-1/2}t+T_n).
	\end{flalign*}
	Recalling the definition of $V^{-1}$ given in equation \eqref{eq:eq_v}, for $t=(\phi^{\top},\xi^{\top})^{\top}$ we have that 
	\begin{flalign}\label{eq:newform}
		Q_n(D_n^{-1/2}t+T_n) - Q_n(\theta^\star)	&=-\frac{1}{2}t^\top V^{-1}t+\frac{1}{2}Z_{n_1}^{\top}\Sigma_{11}^{-1}Z_{n_1}+\frac{1}{2}Z_{n_2}^{\top}\Sigma_{22}^{-1}Z_{n_2}\nonumber +\sum_{j=1}^{4}R_{jn}(D_n^{-1/2}t+T_n),
	\end{flalign}
	and the cut posterior $\pi(t\mid \bw,\bz)$ can be restated as 
	\begin{flalign*}
		\pi(t|\bw,\bz)&=\frac{\pi\{D_{n}^{-1/2}t+T_{n}\}\exp[ \{Q_n\{D_{n}^{-1/2}t+T_{n}\}-Q_n(\theta^\star)\}]}{\int_{\mathcal{T}_n} {\pi\{D_{n}^{-1/2}t+T_{n}\}\exp[ \{Q_n\{D_{n}^{-1/2}t+T_{n}\}-Q_n(\theta^\star)\}]\dt t}}=\frac{\pi\{D_{n}^{-1/2}t+T_{n}\}\exp\{\omega(t)\}}{C_n},
	\end{flalign*}where
	$$
	\omega(t) = -\frac{1}{2}t^\top V^{-1}t +\sum_{j=1}^{4}R_{jn}(D_n^{-1/2}t+T_n),
	$$
	and 
	\begin{flalign*}
		C_{n}:=&\int_{\mathcal{T}_n} \pi(D_{n}^{-1/2}t+T_{n})\exp\{Q_n(D_{n}^{-1/2}t+T_{n})-Q_n(\theta^\star)\}\dt t.
	\end{flalign*}
	
	The stated result follows if
	\begin{flalign*}
		\int_{\mathcal{T}_n}  \left|\pi(t|\bw,\bz)-N\{t;0,V\}\right|\dt t&=C_n^{-1}J_n=o_p(1), 
	\end{flalign*}where
	\begin{flalign*}
		J_{n}&=\int_{\mathcal{T}_n} \bigg{|}\exp\left\{\omega(t)\right\} \pi_{}\left\{T_n+D_n^{-1/2}{t_{}}\right\}-C_nN\{t;0,V^{-1}\}\bigg{|} \dt t.
	\end{flalign*}The above equation takes precisely the same form as in the proof of Theorem \ref{prop:bvm1}, but where the remainder term is now $R_{1n}(\theta)+R_{2n}(\theta)+R_{3n}(\theta)+R_{4n}(\varphi)$. Hence, so long as this new remainder satisfies Assumption \ref{ass:expand}(iii), the proof follows the same arguments used in Theorem \ref{prop:bvm1}. A sufficient condition for this new remainder term to satisfy Assumption \ref{ass:expand}(iii) is that Assumption \ref{ass:expand}(iii) is satisfied for each term. We note that $R_{1n}(\theta)$ and $R_{4n}(\varphi)$ both satisfy the condition by hypothesis,  while  Lemma \ref{lem:remain} verifies Assumption \ref{ass:expand}(iii) for $R_{2n}(\theta)$ and $R_{3n}(\theta)$. 
	
	The remainder of the proof follows the same arguments as those used to prove Theorem \ref{prop:bvm1} and is omitted for the sake of brevity. 
\end{proof}

\subsection{Lemmas}

\begin{lemma}\label{lem:remain}
	Under the assumptions of Corollary \ref{corr:bvm_rev}, Assumption 4(iii) is satisfied for $R_{2n}(\theta)$, and $R_{3n}(\theta)$.
\end{lemma}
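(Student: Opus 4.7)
The plan is to make explicit the forms of $R_{2n}(\theta)$ and $R_{3n}(\theta)$ that arise in the proof of Corollary \ref{corr:bvm_rev}, and then bound each using Assumption \ref{ass:fur_exp} together with the $\sqrt{n_1}$-rate concentration of $\varphi$ that follows from Lemma \ref{lem:varphi}. Both remainders originate solely from the Taylor expansion of $\Delta_{n_2}(\eta^\star,\varphi)/\sqrt{n_2}$ around $\varphi^\star$ used to linearise $t_\varphi$ in the Corollary \ref{corr:bvm_rev} proof, so the task reduces to tracking how those two expansion errors propagate through the quadratic form $-\tfrac12 t_\varphi^\top J(\varphi^\star) t_\varphi$.

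Concretely, for an intermediate $\bar\varphi$ with $\|\bar\varphi-\varphi^\star\|\le\|\varphi-\varphi^\star\|$, introduce the two slack vectors
$$
A_2=\{\nabla_\varphi\Delta_{n_2}(\eta^\star,\bar\varphi)/n_2-\nabla_{\eta\varphi}^2\mathbb{M}(\eta^\star,\bar\varphi)\}\,\vartheta\sqrt{n_1}(\varphi-\varphi^\star),
$$
$$
A_3=\{\nabla_{\eta\varphi}^2\mathbb{M}(\eta^\star,\bar\varphi)-\nabla_{\eta\varphi}^2\mathbb{M}(\eta^\star,\varphi^\star)\}\,\vartheta\sqrt{n_1}(\varphi-\varphi^\star),
$$
so that, as in the Corollary \ref{corr:bvm_rev} proof, $t_\varphi=\xi-\vartheta\Sigma_{12}\phi-J(\varphi^\star)^{-1}(A_2+A_3)+o_p(1)$. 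Substituting this into $-\tfrac12 t_\varphi^\top J(\varphi^\star) t_\varphi$ and matching with the clean quadratic used in Corollary \ref{corr:bvm_rev}, identify $R_{jn}(\theta)$, $j=2,3$, as the pair of terms $(\xi-\vartheta\Sigma_{12}\phi)^\top J(\varphi^\star)^{-1}A_j+\tfrac12 A_j^\top J(\varphi^\star)^{-1}A_j$, with any $A_2A_3$ cross product absorbed by $2ab\le a^2+b^2$.

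The bounds on the slack vectors are now immediate: Assumption \ref{ass:fur_exp}(ii) gives $\sup_{\bar\varphi\in\Phi_\delta}\|\nabla_\varphi\Delta_{n_2}(\eta^\star,\bar\varphi)/n_2-\nabla_{\eta\varphi}^2\mathbb{M}(\eta^\star,\bar\varphi)\|=o_p(1)$, and continuity in Assumption \ref{ass:fur_exp}(i) yields $\|\nabla_{\eta\varphi}^2\mathbb{M}(\eta^\star,\bar\varphi)-\nabla_{\eta\varphi}^2\mathbb{M}(\eta^\star,\varphi^\star)\|=o(1)$ on the shrinking neighbourhood employed in the Region 2 argument. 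Since $\sqrt{n_1}(\varphi-\varphi^\star)=\phi+Z_{n_1}$ with $Z_{n_1}=O_p(1)$, this gives $\|A_j\|\le o_p(1)(1+\|\phi\|)$. Using boundedness of $J(\varphi^\star)^{-1}$ from Assumption \ref{ass:expand}(ii) and Cauchy--Schwarz,
$$
|R_{jn}(\theta)|\le C\|A_j\|(\|\phi\|+\|\xi\|)+C\|A_j\|^2\le o_p(1)\bigl(1+\|\phi\|^2+\|\xi\|^2\bigr),
$$
uniformly on $\Phi_{\delta_1}\times\{\|\eta-\eta^\star_\varphi\|\le\delta_2\}$. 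In the joint $t=(\phi^\top,\xi^\top)^\top$ parametrisation used to port Region 2 of the proof of Theorem \ref{prop:bvm1} into that of Corollary \ref{corr:bvm_rev}, the relevant denominator in the lifted form of Assumption \ref{ass:expand}(iii) is $1+\|t\|^2=1+\|\phi\|^2+\|\xi\|^2$, whence $|R_{jn}|/(1+\|t\|^2)=o_p(1)$ uniformly, which is exactly what Corollary \ref{corr:bvm_rev} requires.

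The main bookkeeping obstacle is handling the implicit, $\varphi$-dependent choice of $\bar\varphi$ and cleanly allocating any cross terms between $R_{2n}$ and $R_{3n}$. The workaround is to avoid any specific measurable selection by majorising all mixed products via $2ab\le a^2+b^2$ and working with a single envelope $o_p(1)(1+\|\phi\|^2+\|\xi\|^2)$ term by term, which controls each remainder separately without committing to a particular $\bar\varphi(\varphi)$.
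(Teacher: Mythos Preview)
Your proposal misidentifies what $R_{2n}(\theta)$ and $R_{3n}(\theta)$ actually are. You assume both remainders come ``solely from the Taylor expansion of $\Delta_{n_2}(\eta^\star,\varphi)/\sqrt{n_2}$ around $\varphi^\star$'' and then track how those two slack vectors $A_2,A_3$ propagate through the quadratic $-\tfrac12 t_\varphi^\top J(\varphi^\star)t_\varphi$. That is not how the paper defines them. In the paper, $R_{2n}(\theta)=n(\eta-\eta^\star)^\top\{\nabla_{\eta\varphi}\mathbb{M}(\eta^\star,\bar\varphi)-\nabla_{\eta\varphi}\mathbb{M}(\eta^\star,\varphi^\star)\}(\varphi-\varphi^\star)$, which is essentially the cross term your $A_3$ generates; but $R_{3n}(\theta)=-\tfrac{n}{2}(\eta-\eta^\star)^\top\{J(\eta^\star,\varphi)-J(\eta^\star,\varphi^\star)\}(\eta-\eta^\star)$, which arises from replacing $J(\varphi)$ by $J(\varphi^\star)$ in the quadratic term of the original Assumption~\ref{ass:expand} expansion. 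Your argument never touches this $J$-replacement error: you start from \eqref{eq:a12} with $J(\varphi^\star)$ already in place and implicitly treat the passage from $J(\varphi)$ to $J(\varphi^\star)$ as costless, when in fact that is precisely the content of the paper's $R_{3n}$. Your $A_2$ slack, on the other hand, is what the Corollary proof absorbs directly as the $o_p(1)$ coming from Assumption~\ref{ass:fur_exp}(ii), not one of the named remainders.

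The paper's approach is also structurally simpler than yours. Rather than feeding slack vectors through $t_\varphi$ and then Cauchy--Schwarz, it writes each $R_{jn}$ directly as a bilinear (or quadratic) form $\tfrac12\sqrt{n}(\theta-\theta^\star)^\top M_n\sqrt{n}(\theta-\theta^\star)$ with $\|M_n\|=o(1)$ by continuity of $\nabla_{\eta\varphi}^2\mathbb{M}(\eta^\star,\cdot)$ and $J(\eta^\star,\cdot)$ on shrinking neighbourhoods. This immediately gives $|R_{jn}|/(1+n\|\theta-\theta^\star\|^2)\le\|M_n\|=o(1)$. Once you correct the identification of $R_{3n}$, the same one-line bound applies, and the indirect route through $t_\varphi$, $A_j$, and the $2ab\le a^2+b^2$ allocation becomes unnecessary.
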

\begin{proof}
	For $j=2,3$, Assumption 4(iii) is equivalent to the following condition: for any $\delta_n=o(1)$, 
	\begin{equation}\label{eq:ref_rem}
		\sup_{\|\theta-\theta^\star\|\le\delta_n}\frac{|R_{jn}(\theta)|}{1+n\|\theta-\theta^\star\|^2}=o_p(1).
	\end{equation}
	We verify \eqref{eq:ref_rem} separately for $j=2$ and $j=3.$ 
	\medskip
	
	\noindent\textbf{Term $R_{2n}(\theta)$:} Define the matrix function $V(\varphi,\varphi^\star):=\left\{\nabla_{\eta\varphi} \mathbb{M}(\eta^\star,\varphi)-\nabla_{\eta\varphi} \mathbb{M}(\eta^\star,\varphi^\star)\right\}$ and consider
	\begin{flalign*}
		R_{2n}(\theta)&={n}{}(\eta-\eta^\star)^{\top}V(\bar\varphi,\varphi^\star)(\varphi-\varphi^\star)=\frac{1}{2}\sqrt{n}(\theta-\theta^\star)^{\top}\begin{pmatrix}
			0&V(\bar\varphi,\varphi^\star)\\V(\bar\varphi,\varphi^\star)&0
		\end{pmatrix}\sqrt{n}(\theta-\theta^\star),
	\end{flalign*}where $\bar\varphi$ is some intermediate value satisfying $\|\bar\varphi-\varphi^\star\|\le\|\varphi-\varphi^\star\|$. We then see that 
	$$
	|R_{2n}(\theta)|\le \|\sqrt{n}(\theta-\theta^\star)\|^2\|V(\bar\varphi,\varphi^\star)\|^2,
	$$
	and
	$$
	\sup_{\|\theta-\theta^\star\|\le\delta_n}\frac{|R_{2n}(\theta)|}{1+n\|\theta-\theta^\star\|^2}\le \sup_{\|\theta-\theta^\star\|\le\delta_n}\|V(\bar\varphi,\varphi^\star)\|^2\frac{\|\sqrt{n}(\theta-\theta^\star)\|^2}{1+\|\sqrt{n}(\theta-\theta^\star)\|^2}
	\le \|V(\bar\varphi,\varphi^\star)\|^2,
	$$since $\sup_{\|\theta-\theta^\star\|\le\delta_n}\frac{\|\sqrt{n}(\theta-\theta^\star)\|^2}{1+\|\sqrt{n}(\theta-\theta^\star)\|^2}\le 1$ for any $\delta_n=o(1)$. 
	
	From the definition of the intermediate value $\bar\varphi$, we have that $\|\bar\varphi-\varphi^\star\|\le\|\varphi-\varphi^\star\|\le\delta_n$. From Assumption \ref{ass:fur_exp}, $V(\varphi,\varphi^\star)$ is continuous in $\varphi$ for all $\varphi$ in a neighborhood of $\varphi^\star$. Conclude from this continuity that $\|V(\varphi,\varphi^\star)\|^2=o(1)$ when $\|\theta-\theta^\star\|\le\delta_n$. Equation \eqref{eq:ref_rem} is satisfied for $R_{2n}(\theta)$. 
	
	\medskip
	
	\noindent\textbf{Term $R_{3n}(\theta)$.} Now, let 
	$V(\varphi,\varphi^\star):=[J(\eta^\star,\varphi)-J(\eta^\star,\varphi^\star)]$. Similar to the proof of the $R_{2n}(\theta)$ term, 
	\begin{flalign*}
		R_{3n}(\theta)&=-\frac{n}{2}(\eta-\eta^\star)^{\top}V(\varphi,\varphi^\star)(\eta-\eta^\star)=-\sqrt{n}(\theta-\theta^\star)^{\top}\begin{pmatrix}
			0&0\\0&V(\varphi,\varphi^\star)
		\end{pmatrix}\sqrt{n}(\theta-\theta^\star),
	\end{flalign*}so that 
	$
	|R_{3n}(\theta)|\le \|\sqrt{n}(\theta-\theta^\star)\|^2\|V(\varphi,\varphi^\star)\|^2.
	$ Repeating the same argument as used in the first part of the result, we have that
	\begin{flalign*}
		\sup_{\|\theta-\theta^\star\|\le\delta_n}\frac{|R_{3n}(\theta)|}{1+n\|\theta-\theta^\star\|^2}&\le \sup_{\|\theta-\theta^\star\|\le\delta_n}\|V(\varphi,\varphi^\star)\|^2\frac{\|\sqrt{n}(\theta-\theta^\star)\|^2}{1+\|\sqrt{n}(\theta-\theta^\star)\|^2}\\&\le \sup_{\|\theta-\theta^\star\|\le\delta_n}\|V(\varphi,\varphi^\star)\|^2=o(1).
	\end{flalign*}
	
\end{proof}

\section{Credible sets for understanding uncertainty propagation in the cut posterior}

The following algorithm describes the construction of credible sets for $\eta$
repeatedly for samples of $\varphi$ from the cut posterior, using the large
sample approximation of Theorem 1.  
\begin{algorithm}[H]
	\caption{Diagnostic for $\eta|\bw,\varphi$}
	\label{alg:CS}
	\begin{algorithmic}
		\STATE Inputs: A sequence $\varphi_1,\dots,\varphi_M$, and quantile $\alpha$.  
		\STATE Output: A sequence of approximate confidence sets $\{C^\eta_\alpha(\varphi_j):j\le M\}$
		\FOR{$j=1,\dots,M$ and $\varphi=\varphi_j$}
		\STATE Estimate $\eta^\star_\varphi$, $J(\eta^\star_\varphi|\varphi)$ by $\hat{\eta}_\varphi$, $J_n(\hat{\eta}_\varphi|\varphi)$.
		\STATE Draw: $Z_k\stackrel{iid}{\sim} N\{\hat{\eta}_\varphi,[\nu n J(\hat{\eta}_\varphi|\varphi)]^{-1}\}$, for $k=1,\dots,K$	
		\STATE Calculate $Y_k=[n\nu J_n(\hat{\eta}_\varphi|\varphi)]^{-1/2}\{Z_k-\hat{\eta}_\varphi\}$, 
		\STATE Retain all $Y_k$ such that $\|Y_k\|^2\le \chi^2_{d_\eta}(1-\alpha)$. 
		\ENDFOR	
	\end{algorithmic}	
\end{algorithm}

\section{Marginal semi-modular posterior}

Algorithm \ref{alg:divs} describes how to draw MCMC samples from our
proposed semi-modular posterior density introduced in Section 3.4.  
\begin{algorithm}[H]
	\caption{Semi-modular posterior}
	\label{alg:divs}
	\begin{algorithmic}
		\STATE Inputs: a value of $\gamma$, and a transition kernel $q(\varphi|\varphi')$. 
		\STATE Output: Draws from the approximate semi-modular cut posterior $\widehat\pi^\gamma_\cut(\varphi|\bz,\bw)$.
		\STATE Initialize $\varphi^{(0)}$
		\FOR{$j=1,\dots,M$ and $\varphi=\varphi_j$}
		\STATE Draw $\bar{\varphi}\sim q(\varphi|\varphi^{i-1})$
		\STATE Estimate $\eta_{\bar{\varphi}}$, $J(\eta_{\bar{\varphi}}|\bar{\varphi})$ by $\widehat{\eta}_{\bar{\varphi}}$, $J_{n_2}(\widehat{\eta}_{\bar{\varphi}}|\varphi)$
		\STATE Choose $\eta^\star$ in the HPD region of $N\{\eta;\eta_{\bar{\varphi}}
		,[n_2\nu J_{n_2}(\widehat{\eta}_{\bar{\varphi}}|\varphi)]^{-1}\}$
		\STATE Compute $\ln\widehat{m}_\eta(\bw|\bar{\varphi})$ via \eqref{eq:margest}.
		\STATE Compute 
		$$L^P=\pi_\cut(\bar{\varphi}|\bz)\exp\{\ln\widehat{m}_\eta(\bw|\bar{\varphi})\}^\gamma,\,\,\,\,L^{i-1}=\pi_{\text{cut}}(\varphi^{i-1}|\bz)\exp\{\ln \widehat{m}_\eta(\bw|\varphi^{i-1})\}^\gamma,$$ and the Metropolis-Hastings ratio:
		$
		r=L^P \pi(\bar{\varphi})q(\varphi^{i-1}|\bar{\varphi})/L^{i-1} \pi(\varphi^{i-1})q(\bar{\varphi}|\varphi^{i-1}).
		$
		\IF{$\mathcal{U}(0,1)<r$}
		\STATE Set $\varphi^i=\bar{\varphi}$	
		\ELSE
		\STATE Set $\varphi^i=\varphi^{i-1}$
		\ENDIF
		\ENDFOR 
	\end{algorithmic}	
\end{algorithm}

Figure \ref{smi} shows semi-modular posterior densities for $\eta_1$
and $\eta_2$ for the method of \cite{carmona2020semi} (top)
and marginal semi-modular
approach of Section 3.3 (bottom) for 
$\gamma\in \{0,0.2,0.4,0.6,0.8,1\}$ for the epidemiological example
of Section 4.1.  We can see that for the same
value of $\gamma$ for the two methods, the semi-modular posterior
densities are similar.  For both approaches, after the SMI samples
for $\varphi$ are drawn, we obtained samples for $\eta$ for each
$\varphi$ sample using the SIR approach described in Section 4.1.  
However, using the normal approximation directly makes little difference
to the result (results not shown).  
\begin{figure}[h]
	\begin{center}
		\begin{tabular}{c}
			\includegraphics[width=85mm]{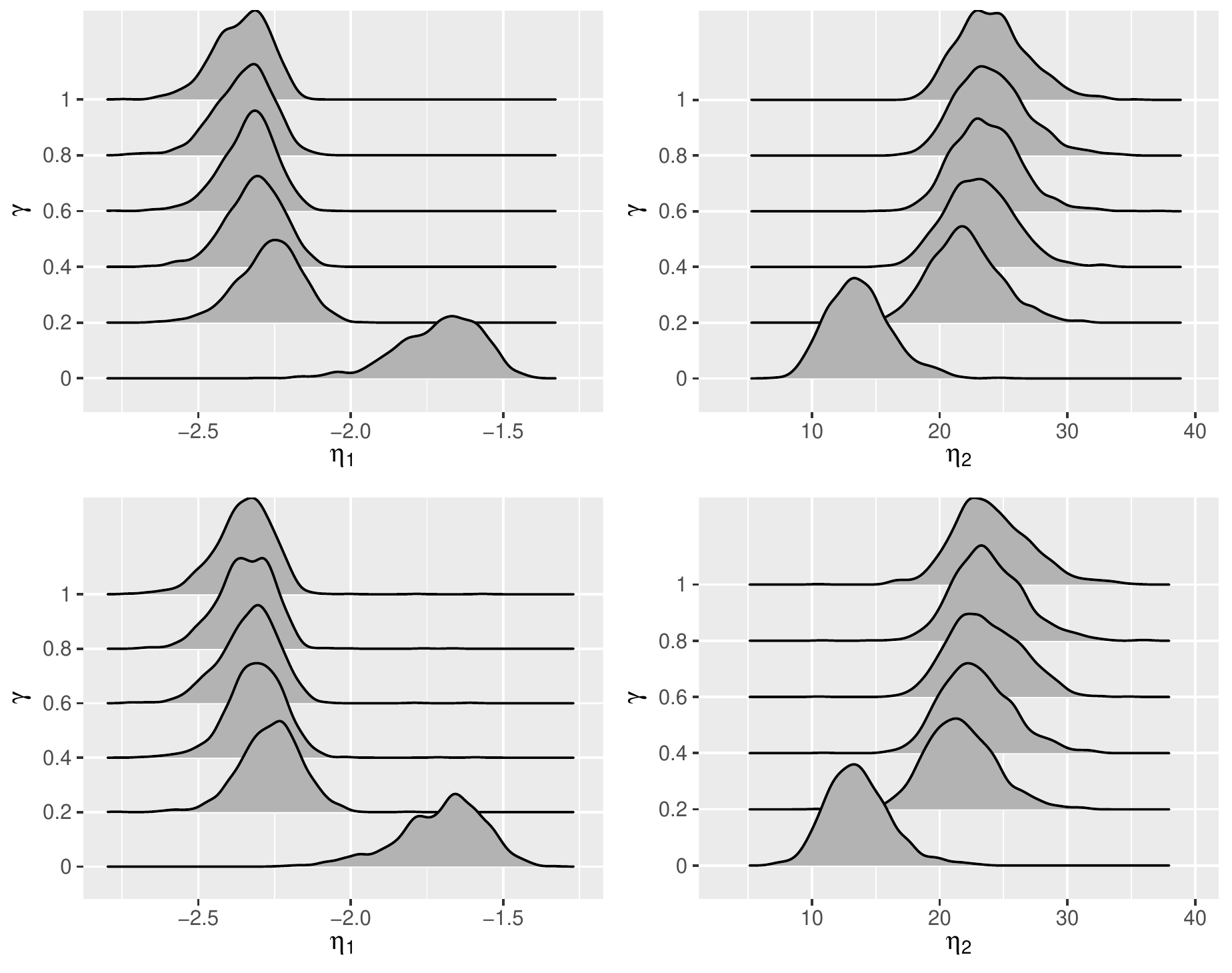} 		
		\end{tabular}
	\end{center}
	\vspace{-.7cm}
	\caption{\label{smi}Semi-modular posterior densities for
		method of \cite{carmona2020semi} (top row) and marginal
		semi-modular method (bottom row) for $\eta_1$ (left) and 
		$\eta_2$ (right) and $\gamma\in \{0,0.2,0.4,0.6,0.8,1\}$}
\end{figure}


\end{document}